        \setlist[enumerate,1]{before=\setcounter{enumi}{\value{equation}},after=\setcounter{equation}{\value{enumi}}}
\newcommand*\widefbox[1]{\fbox{\hspace{1em}#1\hspace{1em}}}
\newcounter{todocounter}
\def\undersetbrace#1\to#2{\underbrace{#2}_{#1}}                                                          
\def\oversetbrace#1\to#2{\overbrace{#2}^{#1}}
\def\AMSunderset#1\to#2{\underset{#1}{#2}}
\def\AMSoverset#1\to#2{\overset{#1}{#2}}
\def\East#1#2{\overset{#1}{\longrightarrow}}
\newtheorem{proposition}[subsection]{Proposition}
\newtheorem*{proposition*}{Proposition}
\newtheorem{theorem}[subsection]{Theorem}
\newtheorem*{theorem*}{Theorem}
\newtheorem{lemma}[subsection]{Lemma}
\newtheorem*{lemma*}{Lemma}
\newtheorem{claim}[subsubsection]{Claim}
\newtheorem*{corollary*}{Corollary}
\theoremstyle{definition}
\newtheorem*{remark*}{Remark}
\newtheorem*{example*}{Example}
\newtheorem{example}[subsection]{Example}
\newtheorem*{convention*}{Convention}
\newtheorem{convention}[subsection]{Convention}
\newtheorem*{hypothesis*}{Hypothesis}
\numberwithin{equation}{subsection}
\numberwithin{enumi}{subsection}
\def\o{\,\circ\,}
\def\X{\mathfrak X}
\def\al{\alpha}
\def\be{\beta}
\def\ga{\gamma}
\def\de{\delta}
\def\ep{\varepsilon}
\def\th{\theta}
\def\la{\lambda}
\def\rh{\rho}
\def\si{\sigma}
\def\ta{\tau}
\def\ph{\varphi}
\def\ch{\chi}
\def\ps{\psi}
\def\om{\omega}
\def\Si{\Sigma}
\def\i{^{-1}}
\def\x{\times}
\def\p{\partial}
\let\on=\operatorname
\def\AMSonly#1{}
\def\Id{\on{Id}}
\def\R{\mathbb R}
\def\Diff{{\on{Diff}}}
\newcommand{\sr}[1]%
{\ifmmode{}^\dagger\else${}^\dagger$\fi\ifvmode
\vbox to 0pt{\vss
 \hbox to 0pt{\hskip\hsize\hskip1em
 \vbox{\hsize3cm\raggedright\pretolerance10000
 \noindent #1\hfill}\hss}\vss}\else
 \vadjust{\vbox to0pt{\vss%
 \hbox to 0pt{\hskip\hsize\hskip1em%
 \vbox{\hsize3cm\raggedright\pretolerance10000%
 \noindent #1\hfill}\hss}\vss}}\fi%
}
\def\C{\mathbb{C}}
\def\I{\mathbb{I}}
\def\N{\mathbb{N}}
\def\R{\mathbb{R}}
\def\cA{\mathcal{A}}
\def\cB{\mathcal{B}}
\def\cD{\mathcal{D}}
\def\cG{\mathcal{G}}
\def\cH{\mathcal{H}}
\def\cK{\mathcal{K}}
\def\cL{\mathcal{L}}
\def\cS{\mathcal{S}}
\def\sB{\mathscr{B}}
\def\sK{\mathscr{K}}
\def\RR{\mathbb R}
\def\subs{\subseteq}
\def\oo{\infty}
\def\Cb{C_b}
\def\ev{\on{ev}}
\def\rM{\{M\}}
\def\bM{(M)}
\def\cBl{\cB_{\on{loc}}}
\def\BM{\cB^{[M]}}
\def\BrM{\cB^{\rM}}
\def\BbM{\cB^{\bM}}
\def\DM{\cD^{[M]}}
\def\DrM{\cD^{\rM}}
\def\DbM{\cD^{\bM}}
\def\CM{C^{[M]}}
\def\CrM{C^{\rM}}
\def\CbM{C^{\bM}}
\def\CMb{C^{[M]}_b}
\def\SLM{\tensor{\cS}{}_{[L]}^{[M]}}
\def\SrLM{\tensor{\cS}{}_{\{L\}}^{\rM}}
\def\SbLM{\tensor{\cS}{}_{(L)}^{\bM}}
\def\Hoo{H^{\infty}}
\def\Sp{W^{\infty,p}}
\def\SpM{W^{[M],p}}
\def\SpbM{W^{\bM,p}}
\def\SprM{W^{\rM,p}}
\def\set{\Si}
\def\Lin{\cL}
\def\ind{\varinjlim}
\def\proj{\varprojlim}
\def\DiffA{\Diff\cA(\R^n)}
\def\DiffB{\Diff\cB(\R^n)}
\def\DiffD{\Diff\cD(\R^n)}
\def\DiffS{\Diff\cS(\R^n)}
\def\DiffBM{\Diff\BM(\R^n)}
\def\DiffSp{\Diff\Sp(\R^n)}
\def\DiffSpM{\Diff\SpM(\R^n)}
\def\DiffDM{\Diff\DM(\R^n)}
\def\DiffSLM{\Diff\SLM(\R^n)}
\title[An exotic zoo of diffeomorphism groups on $\R^n$] 
{An exotic zoo of diffeomorphism groups on $\R^n$} 
  \author{Andreas Kriegl, Peter W. Michor, and Armin Rainer}
  \address{Andreas Kriegl: Fakult\"at f\"ur Mathematik, Universit\"at Wien, 
  Oskar-Morgenstern-Platz~1, A-1090 Wien, Austria}
  \email{andreas.kriegl@univie.ac.at}
  \address{Peter W. Michor: Fakult\"at f\"ur Mathematik, Universit\"at Wien, 
  Oskar-Morgenstern-Platz~1, A-1090 Wien, Austria}
  \email{peter.michor@univie.ac.at}
  \address{Armin Rainer: Fakult\"at f\"ur Mathematik, Universit\"at Wien, 
  Oskar-Morgenstern-Platz~1, A-1090 Wien, Austria}
  \email{armin.rainer@univie.ac.at}
  \thanks{AK was supported by FWF-Project P~23028-N13; 
  AR by FWF-Projects P~22218-N13 and P~26735-N25}
  \subjclass[2010]{26E10, 46A17, 46E50, 46F05, 58B10, 58B25, 58C25, 58D05, 58D15, 35Q31}
  \keywords{Diffeomorphism groups, convenient setting, ultradifferentiable test functions, Sobolev Denjoy--Carleman classes, 
  Gelfand--Shilov classes, Hunter--Saxton equation}
\date{\today}
\begin{document}

\begin{abstract}
  Let $\CM$ be a (local) Denjoy--Carleman class of Beurling or Roumieu type, where the weight sequence $M=(M_k)$ is log-convex 
  and has moderate growth.  
  We prove that the groups $\DiffBM$, $\DiffSpM$, $\DiffSLM$, and $\DiffDM$ 
  of $\CM$-diffeomorphisms on $\R^n$ which differ from the identity by a mapping in 
  $\BM$ (global Denjoy--Carleman),
  $\SpM$ (Sobolev--Denjoy--Carleman), 
  $\SLM$ (Gelfand--Shilov),
  or $\DM$ (Denjoy--Carleman with compact support)
  are $\CM$-regular Lie groups. As an application we use the $R$-transform to show that the 
  Hunter--Saxton PDE on the real line is well-posed 
        in any of the classes $W^{[M],1}$, $\SLM$, and $\DM$. Here we find some surprising groups with 
  continuous left translations and $\CM$ right translations (called half-Lie groups), which, however, also admit 
  $R$-transforms. 
\end{abstract}

\maketitle


\section{Introduction}

In this article we introduce a multitude of groups of $\CM$-diffeomorphisms on $\R^n$ and 
prove that all of them are $\CM$-regular Lie groups. 

Recall that a $C^\infty$-mapping $f$ is $\CrM$ if for each compact set $K$ there exists $\rh>0$ 
such that the set
\[
  \Big\{\frac{f^{(k)}(x)}{\rh^k\,k!\, M_k} : x \in K, k \in \N\Big\}
\] 
is bounded, where $M=(M_k)$ is a positive sequence. In this way we get the so-called Denjoy--Carleman 
classes of Roumieu type $\CrM$. If we replace the existential by a universal quantifier we obtain 
the Denjoy--Carleman classes of Beurling type $\CbM$. We will denote by $\CM$ either of them, 
and write $\Box$ for $\exists$ or $\forall$. In 
this paper we shall refer to $\CM$ as \emph{local} Denjoy--Carleman classes, for reasons which will 
become apparent instantly.

In \cite{KMRc}, \cite{KMRq}, and \cite{KMRu} we extended the class $\CM$ to mappings between 
admissible (that is convenient) locally convex spaces and proved that $\CM$ then forms a cartesian closed 
category, i.e., $\CM(E \times F,G) \cong \CM(E,\CM(F,G))$, provided that $M=(M_k)$ is log-convex and 
has moderate growth. Furthermore, we showed that the $\CM$-diffeomorphisms of a compact manifold 
form a $\CM$-regular Lie group. This theory goes by the name \emph{convenient setting}; see Section \ref{sec:DC} 
for a very short presentation of the results used in this paper.

In the present paper we apply the theory developed in \cite{KMRc}, \cite{KMRq}, and \cite{KMRu} in order 
to prove that various sets of $\CM$-diffeomorphisms of $\R^n$ form $\CM$-regular Lie groups.
We denote by $\DiffA$ the set of all mappings $\Id + f : \R^n \to \R^n$, where $\inf_{x \in \R^n} \det(\I_n + df(x))>0$ and
$f \in \cA$, for any of the following classes $\cA$ of test functions:
\begin{itemize}
  \item Global Denjoy--Carleman classes 
  \[
    \BM(\R^n) = \Big\{f\in C^\infty(\R^n) : \Box \rh>0 \sup_{\al \in \N^n}\frac{\|\p^\al f\|_{L^\infty(\R^n)}}{\rh^{|\al|}\,|\al|!\, M_{|\al|}}< \infty\Big\}.
  \]
  \item Sobolev--Denjoy--Carleman classes 
  \[  
    \SpM(\R^n) = \Big\{f\in C^\infty(\R^n) : \Box \rh>0 \sup_{\al \in \N^n}\frac{\|\p^\al f\|_{L^p(\R^n)}}{\rh^{|\al|}\,|\al|!\, M_{|\al|}}< \infty\Big\},
    \quad 1 \le p <\infty.
  \]
  \item Gelfand--Shilov classes 
  \[  
    \SLM(\R^n) = \Big\{f\in C^\infty(\R^n) : \Box \rh>0 
    \sup_{\substack{p\in \N\\\al \in \N^n}}\frac{\|(1+|x|)^p \p^\al f\|_{L^\infty(\R^n)}}{\rh^{p+|\al|}\,p!|\al|!\, L_p M_{|\al|}} < \infty\Big\}.
  \]
  \item Denjoy--Carleman functions with compact support
  \[
    \DM(\R^n) = \CM(\R^n) \cap \cD(\R^n) = \BM(\R^n) \cap \cD(\R^n).
  \]  
\end{itemize}

We require that $M=(M_k)$ is log-convex and has moderate growth, and that $\CbM \supseteq C^\om$ in the Beurling case. 
These assumptions guarantee (and are partly necessary for) the validity of basic results like stability under composition, 
inverse mapping theorem, solvability of ODEs, and cartesian closedness in the class $\CM$ which are essential for our analysis. 
Note that $\DM(\R^n)$ is trivial unless $M=(M_k)$ is non-quasianalytic.

For the sequence $L=(L_k)$ we just assume $L_k\ge 1$ for all $k$. 
Note that $\DM \subseteq \SLM$, see Proposition \ref{prop:incl}, and hence $\SLM$ is certainly non-trivial if $M=(M_k)$ is 
non-quasianalytic.     

The following is our main theorem.

\begin{theorem} \label{thm:main}
  Let $M=(M_k)$ be log-convex and have moderate growth; in the Beurling case we also assume $\CbM \supseteq C^\om$.
  Assume that $L=(L_k)$ satisfies $L_k \ge 1$ for all $k$. 
  Let $1 \le p < q\le \infty$.
  Then $\DiffBM$, $\DiffSpM$, $\DiffSLM$, and $\DiffDM$ are $\CM$-regular Lie groups.
  We have the following $\CM$ injective group homomorphisms
  \[
    \DiffDM \!\!\rightarrowtail\!\! \DiffSLM \!\!\rightarrowtail\!\! \DiffSpM \!\!\rightarrowtail\!\! \Diff{W^{[M],q}(\R^n)} 
    \!\!\rightarrowtail\!\! \DiffBM.
  \]
  Each group in this diagram is normal in the groups on its right.
\end{theorem}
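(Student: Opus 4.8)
The plan is to treat the four groups by a single scheme: for each of the function spaces $\cA\in\{\BM(\R^n),\SpM(\R^n),\SLM(\R^n),\DM(\R^n)\}$ I would verify the structural hypotheses of the convenient $\CM$-calculus of \cite{KMRc}, \cite{KMRq}, \cite{KMRu} (cartesian closedness, $\CM$ inverse function theorem, ODE solvability), and only afterwards deduce the homomorphisms and normality from the underlying inclusions of spaces. First I would record that each such $\cA$ is a convenient vector space whose bornology is described by the displayed families of seminorms. The decisive structural input is a \emph{composition lemma}: from the identity
\[
  (\Id+f)\o(\Id+g)=\Id+\big(g+f\o(\Id+g)\big),
\]
the existence of the group structure on $\Diff\cA(\R^n)$ reduces to showing $f\o(\Id+g)\in\cA$ for $f,g\in\cA$, together with the analogous statement for inverses. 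I would prove this by applying the Fa\`a di Bruno formula to $\p^\al\big(f\o(\Id+g)\big)$ and bounding the resulting sum using log-convexity and moderate growth of $M=(M_k)$, which is exactly what renders the factorial-weighted series summable.

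With the composition lemma in hand I would equip each $\Diff\cA(\R^n)$ with a single-chart $\CM$-manifold structure modeled on $\cA$: it is the open subset $\{\Id+f:\inf_{x}\det(\I_n+df(x))>0\}$ of the affine space $\Id+\cA$, openness following from continuity of $f\mapsto df$ into the bounded maps. Smoothness in the $\CM$ sense of composition and of inversion is then checked via cartesian closedness, testing against $\CM$-curves and invoking the exponential law together with the $\CM$ inverse function theorem recalled in Section \ref{sec:DC}; the inverse of $\Id+f$ is $\Id+h$ with $h\in\cA$, the seminorm bounds on $h$ coming from differentiating $h+f\o(\Id+h)=0$. For $\CM$-regularity I would solve, for a $\CM$-curve $t\mapsto X_t\in\cA$, the flow equation $\p_t\,\mathrm{Evol}(X)_t=X_t\o\mathrm{Evol}(X)_t$, using ODE solvability in the convenient $\CM$-setting; a Gr\"onwall estimate on the seminorms of $\mathrm{Evol}(X)_t-\Id$ (again via the composition lemma) keeps the solution inside $\Diff\cA(\R^n)$ and makes $X\mapsto\mathrm{Evol}(X)$ a $\CM$-map.

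The chain of maps is induced by the bounded linear, hence $\CM$, inclusions $\DM\subseteq\SLM\subseteq\SpM\subseteq W^{[M],q}\subseteq\BM$, where the first is Proposition \ref{prop:incl} and the Sobolev nesting $W^{[M],p}\subseteq W^{[M],q}$ for $p\le q\le\infty$ uses that all derivatives lie in $L^p$, so that the functions are bounded and decaying and $L^p\cap L^\infty\subseteq L^q$. These restrict to injective group homomorphisms because they are compatible with composition. Normality of each subgroup in the larger ones follows by showing that the smaller space is an \emph{ideal} for the composition action of the larger group, i.e.\ that $(\Id+g)\o(\Id+f)\o(\Id+g)\i-\Id$ lies in $\cA_{\mathrm{small}}$ whenever $f\in\cA_{\mathrm{small}}$ and $\Id+g$ lies in the larger group; after expanding the conjugate this reduces, via a telescoping/mean-value argument, to the composition lemma and to the invariance of $\cA_{\mathrm{small}}$ under composition with diffeomorphisms from the larger group.

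I expect the main obstacle to be the composition lemma for the two non-sup-norm classes. For $\SpM$ the change of variables in $\|\p^\al(f\o(\Id+g))\|_{L^p}$ produces a Jacobian that must be controlled from below by $\inf_x\det(\I_n+dg(x))>0$, while for the Gelfand--Shilov class one must in addition track how the weight $(1+|x|)^p$ transforms under $x\mapsto x+g(x)$ and how the second sequence $L=(L_k)$ enters the Fa\`a di Bruno sum. Assembling these estimates so that the factorial-weighted series converge uniformly enough to yield genuine $\CM$-smoothness, rather than mere well-definedness, is the technical heart of the argument.
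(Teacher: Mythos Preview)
Your overall architecture matches the paper's: single global chart, Fa\`a di Bruno for composition, the implicit equation $h+f\o(\Id+h)=0$ for inversion, the flow ODE for regularity, and the conjugation identity (Section~\ref{sec:end}) for normality. Two points, however, need sharpening before the argument goes through.

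First, testing against $\CM$-\emph{curves} is not sufficient in general for the Roumieu case; it works only for non-quasianalytic $M$ (see the remarks at the end of Subsection~\ref{ssec:localDC}). The paper tests against $\CM_b$-\emph{Banach plots}, and the bridge that makes this tractable is Lemma~\ref{lem:key}: it identifies $\CM$-plots $U\to\cA(\R^n)$ (for $U$ open in a Banach space) with functions $f\in\CM(U\times\R^n)$ satisfying the concrete global conditions \eqref{cond:B}, \eqref{cond:W}, \eqref{cond:S}, or \eqref{cond:D}. Compact regularity of the Roumieu (LB)-spaces (Lemma~\ref{lem:cpreg}) is what makes $\CM_b$-plots and $\CM$-plots coincide here. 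Without this characterization one cannot convert your Fa\`a di Bruno estimates into a statement about plots, which is what $\CM$-smoothness of $\on{comp}$ and $\on{inv}$ actually demands.

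Second, your sketch of inversion (``seminorm bounds on $h$ coming from differentiating $h+f\o(\Id+h)=0$'') understates the difficulty. Differentiating gives $G^{(k)}$ recursively in terms of lower $G^{(j)}$ via Fa\`a di Bruno, but closing this recursion with constants uniform in $x\in\R^n$ is not automatic. The paper uses Yamanaka's majorant trick (Lemma~\ref{lem:Yam}): rewrite $G=T+\ph\o G$ with $\ph=\Id-T\o F$ so that $\ph'(a)=0$, and compare the recursion with an explicit formal series $g_N$ satisfying $g_N(s)=As+\ps_N(g_N(s))$, whose coefficients obey the closed bound $ic_i<A(4A(CA+1)\rh)^{i-1}M_{i-1}$. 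The Beurling case is then reduced to the Roumieu one via Komatsu's intermediate-sequence lemma. For $\cA\in\{\SpM,\SLM,\DM\}$ the paper first obtains $g\in\BM$ by this route and only then bootstraps the finer decay by reapplying the composition estimates (Theorems~\ref{thm:Wcomp2} and \ref{thm:Scomp2}) to the identity $g=-f\o(\Id+g)$.
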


For the precise meaning of \emph{$\CM$-regular} we refer to Section \ref{sec:reg}.

The \emph{classical} case, i.e., without predescribed $[M]$-growth was recently proved in \cite{MichorMumford13}: 
The groups of diffeomorphisms
\[
      \xymatrix{
        \DiffD \ar@{{ >}->}[r] & \DiffS \ar@{{ >}->}[r] & \Diff W^{\infty,2}(\R^n) \ar@{{ >}->}[r] & \DiffB 
      }
  \]
are $C^\infty$-regular Lie groups. The arrows in the diagram describe $C^\infty$ injective group homomorphisms, and 
each group is a normal subgroup of the groups on its right.

In \cite{Glockner05} the parameterization $\Id + f$ for $f\in \cD(\mathbb R^n)^n$ 
was used as global chart for $\Diff\cD(\mathbb R^n)$ for the first time.
In the paper \cite{Walter12} it was shown that $\Diff\cS(E)$ and $\Diff\cB(E)$ are $C^\infty$-regular 
Lie groups, where $E$ is a Banach space; also the case of other systems of weight functions on $E$ 
was treated. The method of proof of \cite{Walter12} is iterative in the degree of 
differentiability. 

We want to point out here that the conclusions of Theorem \ref{thm:main} carry over to the groups 
$\Diff\BM(E)$ and $\Diff\SLM(E)$ for a Banach space $E$
instead of $\mathbb R^n$ with some obvious changes in notation in the proofs given below and 
replacing Lemma~\ref{lem:LA} by a Neumann series argument. 
The definition of the spaces $\BM(E,F)$ and $\SLM(E,F)$ along with their basic topological properties 
are given in full generality for Banach spaces $E$ and $F$. 
We wanted to give a uniform
proof for all cases; since $\DiffSpM$ and $\DiffDM$ do not make sense on an infinite dimensional Banach 
space, the main arguments in this paper are done only for $E=\mathbb R^n$.
Moreover, there are exponential laws available for the spaces $\cA$ treated in this paper; we do 
not need them here, so relegated them to another paper \cite{KMR14b}.

The paper is organized as follows. We collect preliminaries on weight sequences and on Fa\`a di Bruno's formula in 
Section \ref{sec:prelim}. In Section \ref{sec:DC} we review the convenient setting of local Denjoy--Carleman classes.
We introduce the classes of test functions alluded to above and discuss some aspects of their 
topology in Section \ref{sec:test}, and we explain their relative inclusions in Section \ref{sec:incl}. 
In Section \ref{sec:comptest} we collect some results on composition of test functions for use in later sections.
We characterize $\CM$-plots, i.e., $\CM$-mappings defined in open subsets of Banach spaces, 
in spaces of test functions in Section \ref{sec:plots}; this is crucial for proving Theorem \ref{thm:main}, see Subsection \ref{ssec:localDC}. 
We show that $\DiffA$ is a group with respect to composition in Section \ref{sec:group}, and
that composition and inversion are $\CM$ in Section \ref{sec:comp} and Section \ref{sec:inv}. 
Regularity is shown in Section \ref{sec:reg}. The proof of Theorem \ref{thm:main} is completed in Section \ref{sec:end} 
by proving the assertions on normality. In Section \ref{sec:local} we show that Theorem \ref{thm:main} fails 
if we only require local $[M]$-estimates:
left translation is not 
$\CbM$ on $\Diff \cB^{\bM}_{\on{loc}}(\R)$, where $\cB^{\bM}_{\on{loc}}(\R) := \CbM(\R) \cap \cB(\R)$.  

Section \ref{sec:HS} is devoted to the Hunter--Saxton PDE on the real line, which 
corresponds to the geodesic equation on an extension of certain diffeomorphism groups on the real 
line for the right invariant weak Riemannian metric induced by the weak inner product 
$\langle  X,Y\rangle_{\dot H^1}:= \int_{\mathbb R}X'(x)\,Y'(x)\,dx$ on the Lie algebra. Using the 
results of the preceding sections, the $R$-transform from \cite{BBM14b} carries over 
to 1-dimensional extensions of many of the groups $\Diff\cA(\mathbb R)$ and we obtain, that the 
Hunter--Saxton equation is well-posed 
in the corresponding space $\cA(\R)$.
In Section~\ref{sec:strangeSpM} we find the surprising result that the corresponding extensions of 
groups modeled on $\SpM(\R)$ or on $\SpM(\R)\cap L^1(\R)$ for $1<p< \infty$ 
which are needed for the $R$-transform to work are only topological groups with $\CM$ right 
translations; we call them {\it half-Lie groups}. 
The applications in Section \ref{sec:HS} to the Hunter--Saxton equation were the motivation for us 
to check whether groups like $\Diff\cA(\mathbb R^n)$ were $\CM$-regular Lie groups.
An analogous $R$-transform for the homogeneous $H^1$-metric on the space of plane immersed curves 
modulo translations was developed in \cite{BBMM14}. An $R$-transform for several homogeneous 
$H^2$-metrics is in \cite{BBM14a}. Also in these cases Denjoy-Carleman regularity for the 
corresponding geodesic equations can be proved (although this is not stated).

We expect that all our results carry over to the framework of ultradifferentiable classes in the sense of 
\cite{BMT90}, where the growth is controlled by a weight function $\om$ instead of a sequence $M=(M_k)$. 
In fact, both, the weight sequence and the weight function approach, are subsumed under a more general notion 
of ultradifferentiable class defined via weight matrices for which the necessary tools of convenient calculus 
were developed in \cite{Schindl14}, see also \cite{RainerSchindl12} and \cite{RainerSchindl14}. 

\bigskip
\paragraph{\bf Notation}
We use $\N = \N_{>0} \cup \{0\}$.
For each multiindex $\al=(\al_1,\ldots,\al_n) \in \N^n$, we write
$\al!=\al_1! \cdots \al_n!$, $|\al|= \al_1 +\cdots+ \al_n$, and 
$\p^\al=\p^{|\al|}/\p x_1^{\al_1} \cdots \p x_n^{\al_n}$. 
We write $f^{(k)}(x) = d^k f(x)$ for the $k$-th order Fr\'echet derivative of $f$ at $x$ and 
$f^{(\al)}(x) = \p^\al f(x)$ for partial derivatives.

For locally convex spaces $E$ let $\sB(E)$ denote the set of all
closed absolutely convex bounded subsets $B \subseteq E$.
For $B \in \sB(E)$ we denote by $E_B$ the 
linear span of $B$ equipped with the Minkowski functional $\|x\|_B = \inf \{\la>0 : x \in \la B\}$. 
If $E$ is a convenient vector space, then $E_B$ is a 
Banach space.  The collection of compact subsets $K \subseteq U$ is denoted by $\sK(U)$.
In a Banach space $E$ we denote by $B_r(x) := \{y\in E : \|x-y\|<r\}$ the open ball with center $x$ and radius $r$.

We denote by $E^*$ (resp.\ $E'$) the dual space of continuous (resp.\ bounded) linear functionals. $\Lin(E_1,\ldots,E_k;F)$
is the space of $k$-linear bounded mappings $E_1 \x \cdots \x E_k \to F$; if $E_i =E$ for all $i$, we also write $\Lin^k(E;F)$.  
If $E$ and $F$ are Banach spaces, then $\|~\|_{\Lin^k(E;F)}$ denotes the operator norm on $\Lin^k(E;F)$. 

The symbol $\Box$ stands for a quantifier $\forall$ or $\exists$. It is always tied to some space of $[M]$-ultradifferentiable 
functions and should be interpreted as $\Box:=\forall$ if $[M] = \bM$ and $\Box:=\exists$ if $[M] = \rM$.  
Statements that involve more than one $[M]$ symbol must not be interpreted by mixing $\bM$ and $\rM$.

\section{Preliminaries} \label{sec:prelim}

\subsection{Weight sequences} \label{ssec:ws}

Let $M=(M_k)=(M_k)_{k=0,1,\ldots}$ denote a sequence of positive numbers.
We shall always assume that $M_0 = 1 \le M_1$. 

We say that $M=(M_k)$ is \textbf{log-convex} if $k\mapsto\log M_k$ is convex, or equivalently, if 
\begin{equation}
  M_k^2 \le M_{k-1} M_{k+1}, \quad k \in \N.
\end{equation}
If $M=(M_k)$ is log-convex, then $M=(M_k)$ has the following properties:
\begin{align}\label{eq:logconvex}
  &M=(M_k) \text{ is weakly log-convex, i.e., }  k!\, M_k \text{ is log-convex}, \\
  &(M_k)^{1/k} \text{ is non-decreasing}, \label{eq:nondecreasing}\\
  &M_j  M_k\le M_{j+k}, \quad\text{for } j,k\in \N, \label{eq:alg}\\
  &M_1^j \, M_k\ge M_j\, M_{\al_1} \cdots M_{\al_j}, \quad \text{for }\al_i\in \N_{>0}, ~\al_1+\dots+\al_j = k,  \label{eq:FdB} \\
  &M_1^k \, M_n\ge M_k\, M_{1}^{k_1} \cdots M_{n}^{k_n}, \quad \text{for }k_i\in \N, ~\sum_{i=1}^n i k_i = n, ~\sum_{i=1}^n k_i = k; \label{eq:Childress}
\end{align}
cf.\ \cite{KMRu} or \cite{RainerSchindl12}, and \cite[Prop~4.4]{BM04} for \eqref{eq:Childress}. 

We say that $M=(M_k)$ is
\textbf{derivation closed} if 
\begin{equation} \label{eq:dc}
\sup_{k \in \N_{>0}} \Big(\frac{M_{k+1}}{M_k}\Big)^{\frac{1}{k}} < \infty,
\end{equation}
and that $M=(M_k)$ has \textbf{moderate growth} if 
\begin{equation} \label{eq:mg0}
\sup_{j,k \in \N_{>0}} \Big(\frac{M_{j+k}}{M_j \, M_k}\Big)^{\frac{1}{j+k}} <
\infty.
\end{equation}
Obviously, \eqref{eq:mg0} implies \eqref{eq:dc}. 
If $M=(M_k)$ is
derivation closed, then also $k!\, M_k$ is derivation closed and we have
\begin{equation} \label{eq:dc1}
  (k+j)!\, M_{k+j} \le C^{j(k+j)}\, k!\, M_k, \quad\text{for } k,j \in \N 
\end{equation}
for some constant $C\ge 1$.

A weakly log-convex sequence $M=(M_k)$ is called \textbf{quasianalytic} if 
\begin{equation}
  \sum_{k=1}^\infty (k!\, M_k)^{-1/k} = \infty,
\end{equation}
and \textbf{non-quasianalytic} otherwise. It is  called 
\textbf{strongly non-quasianalytic} if
\begin{equation} \label{eq:snqa}
  \sup_k \frac{M_{k+1}}{M_k} \sum_{\ell\ge k} \frac{M_{\ell}}{(\ell+1) M_{\ell+1}} < \infty.
\end{equation}
We refer to \cite{KMRc}, \cite{KMRq}, \cite{KMRu}, or \cite{RainerSchindl12} 
for a detailed exposition of the connection between these conditions on $M=(M_k)$
and the properties of $\CM$. 

\subsection{Fa\`a di Bruno's formula} \label{ssec:Faa}

Let $f \in C^\infty(\R^m,\R)$ and $g \in C^\infty(\R^n,\R^m)$.
We have, cf.\ \cite[Prop~4.3]{BM04}, for all $\ga \in \N^n\setminus \{0\}$, 
\begin{equation} \label{eq:FaaBM}
  \frac{\p^\ga(f\o g)(x)}{\ga!} = \sum \frac{\al!}{k_1!\cdots k_\ell!}\, \frac{(\p^\al f)(g(x))}{\al!} 
    \Big(\frac{\p^{\de_1}g(x)}{\de_1!}\Big)^{k_1} \cdots \Big(\frac{\p^{\de_\ell}g(x)}{\de_\ell!}\Big)^{k_\ell},
\end{equation}
  where $\al=k_1+\cdots+ k_\ell$ and the sum is taken over all sets $\{\de_1,\ldots,\de_\ell\}$ of $\ell$ distinct 
  elements of $\N^n \setminus \{0\}$ and all ordered $\ell$-tuples $(k_1,\ldots,k_\ell) \in (\N^m \setminus \{0\})^\ell$, 
  $\ell = 1, 2,\ldots$, such that $\ga = \sum_{i=1}^\ell |k_i| \de_i$.

The conclusion of the following lemma will be used several times.

\begin{lemma*}
  Let $M=(M_k)$ satisfy \thetag{\ref{ssec:ws}.\ref{eq:Childress}} and let $A>0$. Then there are positive constants $B,C$ depending 
  only on $AM_1$, $m$, and $n$, and $C \to 0$ as $A \to 0$ such that
  \[
    \sum \frac{\al!}{k_1!\cdots k_\ell!} A^{|\al|} M_{|\al|} M_{|\de_1|}^{|k_1|} \cdots M_{|\de_\ell|}^{|k_\ell|} \le B C^{|\ga|} M_{|\ga|}
  \]
  where the sum is as above.   
\end{lemma*}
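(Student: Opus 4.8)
The plan is to separate the analytic weight factors from the purely combinatorial part of the sum, and to recognize the latter as a single Taylor coefficient of an explicit majorant. First I would use the hypothesis \eqref{eq:Childress} to strip off the weights. Writing $d_i := |\de_i|$ and $c_i := |k_i|$, the defining relation $\ga = \sum_i |k_i|\de_i$ gives $|\ga| = \sum_i c_i d_i$ and $|\al| = \sum_i c_i$. Grouping the indices $i$ according to the common value $v = d_i$ and setting $\ka_v := \sum_{i:\,d_i = v} c_i$ produces a genuine partition of $|\ga|$, namely $\sum_v v\,\ka_v = |\ga|$ and $\sum_v \ka_v = |\al|$. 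Applying \eqref{eq:Childress} with $n = |\ga|$, $k = |\al|$ and multiplicities $(\ka_v)$ then yields
\[
  M_{|\al|}\,\prod_i M_{|\de_i|}^{|k_i|} = M_{|\al|}\,\prod_v M_v^{\ka_v} \le M_1^{|\al|}\,M_{|\ga|}.
\]
Pulling out $M_{|\ga|}$ reduces the whole estimate, with $t := AM_1$, to bounding the combinatorial sum
\[
  S(\ga) := \sum \frac{\al!}{k_1!\cdots k_\ell!}\,t^{|\al|}
\]
(over the same index set as in \eqref{eq:FaaBM}) by $B\,C^{|\ga|}$.

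The key observation is that $S(\ga)$ is itself a Taylor coefficient. I would apply Fa\`a di Bruno's formula \eqref{eq:FaaBM} at $x=0$ to the pair $f(y) = \prod_{j=1}^m (1-t\,y_j)^{-1}$ and $g = (\ph,\dots,\ph):\R^n\to\R^m$, where $\ph(x) = \prod_{i=1}^n (1-x_i)^{-1} - 1$. Then $\ph(0)=0$ and $\tfrac{\p^\de\ph(0)}{\de!} = 1$ for every $\de\neq 0$, so each factor $\big(\tfrac{\p^{\de_i}g(0)}{\de_i!}\big)^{k_i}$ equals $1$, while $\tfrac{\p^\al f(0)}{\al!} = t^{|\al|}$. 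Hence every term of \eqref{eq:FaaBM} collapses to $\tfrac{\al!}{k_1!\cdots k_\ell!}\,t^{|\al|}$, and since $f\o g = (1-t\ph)^{-m}$ we obtain
\[
  S(\ga) = \frac{\p^\ga (f\o g)(0)}{\ga!} = [x^\ga]\,\big(1 - t\,\ph(x)\big)^{-m},
\]
the coefficient of $x^\ga$ in $h := (1-t\ph)^{-m}$, a power series with non-negative coefficients.

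To finish I would use a Cauchy/majorant estimate. Because all coefficients of $h$ are $\ge 0$, for any $\rho_0\in(0,1)$ with $t\big((1-\rho_0)^{-n}-1\big)<1$ one has $[x^\ga]h \le h(\rho_0,\dots,\rho_0)\,\rho_0^{-|\ga|}$, which already gives $S(\ga)\le B\,C^{|\ga|}$ with $C=\rho_0^{-1}$ and $B$ depending only on $AM_1,m,n$. To make the $A$-smallness explicit I would instead factor $h-1 = t\,\ph\cdot\Ps(t\ph)$, where $\Ps(u)=\sum_{r\ge 1}\binom{m+r-1}{r}u^{r-1}$ also has non-negative coefficients; then for $\ga\neq 0$,
\[
  S(\ga) = t\,[x^\ga]\big(\ph\,\Ps(t\ph)\big) \le t\,\ph(\rho_0\mathbf 1)\,\Ps\big(t\,\ph(\rho_0\mathbf 1)\big)\,\rho_0^{-|\ga|}.
\]
Fixing $\rho_0=\tfrac12$ (valid once $t(2^n-1)<1$) and reinstating $M_{|\ga|}$ gives the claimed bound with $C=2$ and $B = t\,(2^n-1)\,\Ps\big(t(2^n-1)\big) = O(AM_1)$, so that the constant in front of the geometric factor tends to $0$ as $A\to 0$.

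The only real difficulty is taming the combinatorial sum; once it is identified with the Taylor coefficient of the explicit majorant $h=(1-t\ph)^{-m}$, the remaining estimates are routine. Two bookkeeping points deserve care: checking that the multiplicities $(\ka_v)$ assembled from the pairs $(\de_i,k_i)$ really constitute a partition of $|\ga|$, so that \eqref{eq:Childress} is applicable; and correctly locating the smallness as $A\to 0$. The latter is carried by the prefactor $B=O(AM_1)$, while the geometric rate $C$ is a fixed constant depending only on $n$ — indeed $C$ cannot be made small, since already the single term with $\ell=1$, $\de_1=\ga$, contributes $AM_1\,M_{|\ga|}$ to the left-hand side for every $\ga$.
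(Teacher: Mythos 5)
Your argument follows the same route as the paper's proof: inequality \thetag{\ref{ssec:ws}.\ref{eq:Childress}} strips the weights down to $M_1^{|\al|}M_{|\ga|}$ (the paper cites \cite[Cor~4.5]{BM04} for this step; your regrouping of the pairs $(\de_i,k_i)$ by the common value $v=|\de_i|$ is precisely the verification of that corollary), and the remaining combinatorial sum is then recognized as the family of Taylor coefficients of a convergent power series (the paper cites \cite[Lem~4.8]{BM04}; your generating function $(1-t\ph)^{-m}$ with $\ph(x)=\prod_i(1-x_i)^{-1}-1$ is that lemma made explicit, and the Cauchy estimate on a series with non-negative coefficients is the standard way to extract the geometric bound). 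So the proof is correct and essentially the paper's, with the two cited ingredients written out in full. The one substantive divergence is your closing remark, and you are right about it: since the index set contains the term $\ell=1$, $\de_1=\ga$, $|k_1|=1$, the left-hand side is at least $AM_1M_{|\ga|}$, so a bound $B\,C^{|\ga|}M_{|\ga|}$ with $B,C$ independent of $\ga$ forces $C\ge1$; the assertion ``$C\to0$ as $A\to0$'' cannot hold literally, and the paper's own argument only yields that the domain of convergence grows towards the unit polydisc, i.e.\ $C\to1^{+}$. Your reformulation --- $C$ a fixed constant depending only on $n$, with the smallness carried by $B=O(AM_1)$ --- is what is actually true, and an inspection of the places where the lemma is invoked (Theorems \ref{thm:Wcomp2}, \ref{thm:Scomp2}, and \ref{thm:compDiff}) shows that only boundedness of $C$ for small $A$ is ever used there, so nothing downstream is affected by this correction.
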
 

\begin{proof}
  Inequality (\ref{ssec:ws}.\ref{eq:Childress}) implies $M_{|\al|} M_{|\de_1|}^{|k_1|} \cdots M_{|\de_\ell|}^{|k_\ell|} \le M_1^{|\al|}M_{|\ga|}$, see 
  \cite[Cor~4.5]{BM04}, and hence 
  \[
    \sum \frac{\al!}{k_1!\cdots k_\ell!} A^{|\al|} M_{|\al|} M_{|\de_1|}^{|k_1|} \cdots M_{|\de_\ell|}^{|k_\ell|} \le 
    \sum \frac{\al!}{k_1!\cdots k_\ell!} (A M_1)^{|\al|} M_{|\ga|}.
  \]
  Now the assertion follows from the fact that $h_{\ga}:=\sum \frac{\al!}{k_1!\cdots k_\ell!} (A M_1)^{|\al|}$ are the coefficients of a 
  convergent power series $\sum_{\ga \in \N^n} h_\ga x^\ga$, see \cite[Lem~4.8]{BM04}. Its domain of convergence increases as $A \to 0$.
\end{proof}

We will also use Fa\`a di Bruno's formula for Fr\'echet derivatives of mappings between Banach spaces: for $k\ge 1$, 
\begin{align} \label{eq:FaaF}
\frac{(f\o g)^{(k)}(x)}{k!} 
= \on{sym}\Big( \sum_{j\ge 1} \sum_{\substack{\al\in \N_{>0}^j\\ \al_1+\dots+\al_j =k}}
\frac{1}{j!} f^{(j)}(g(x))\o \Big(\frac{g^{(\al_1)}(x)}{\al_1!}\x\ldots\x\frac{g^{(\al_j)}(x)}{\al_j!}\Big)\Big),
\end{align}
where $\on{sym}$ denotes symmetrization of multilinear mappings.

Occasionally, we shall use formula \eqref{eq:FaaBM} for mappings $g : U \times \R^n \to U \times \R^n$ and $f : U \times \R^n \to \R^n$ 
defined on a product $U \times \R^n$, where $U$ is open in a Banach space. If we write $\p^\ga = \p_1^{\ga_0}\p_2^{\ga'}$ 
for multiindices $\ga=(\ga_0,\ga') = (\ga_0,\ga_1,\ldots,\ga_n) \in \N^{1+n}$, where
$\p_1^{\ga_0}$ are iterated total derivatives with respect to $u \in U$ and $\p_2^{\ga'}$ are partial derivatives 
with respect to $x \in \R^n$, see Convention \ref{convention}, then formula \eqref{eq:FaaBM} remains true up to symmetrization 
on the right-hand side.

\section{Review of local Denjoy--Carleman classes} \label{sec:DC}

\subsection{Local Denjoy--Carleman classes on Banach spaces}

Let $E,F$ be Banach spaces, $U \subseteq E$ open.
We define the \textbf{local Denjoy--Carleman classes} 
\[
  C^{[M]}(U,F) := \Big\{f \in C^\infty(U,F) : \forall K \in \sK(U) ~\Box \rh>0: 
  \|f\|_{K,\rh}^M <\infty\Big\}, 
\]
where (for any subset $K \subseteq U$)
\[
  \|f\|_{K,\rh}^M := \sup_{\substack{k \in \N\\ x \in K}} \frac{\|f^{(k)}(x)\|_{\Lin^k(E;F)}}{\rh^k k! M_k}.
\]
See \cite[4.2]{KMRu} for the locally convex structure of these spaces.
The elements of $C^{\bM}(U,F)$ are said to be of Beurling type; those of $C^{\rM}(U,F)$ of Roumieu type. 
If $M_k=1$, for all $k$, then $C^{(M)}(\R) = C^{(M)}(\R,\R)$ consists of the restrictions 
of the real and imaginary parts of all entire functions, 
while $C^{\{M\}}(\mathbb R)$ coincides with the ring $C^\om(\mathbb R)$ of real analytic functions.

\subsection{Convenient setting of local Denjoy--Carleman classes} \label{ssec:localDC}

The classes $C^{[M]}$ can be extended to 
convenient vector spaces, and they then form cartesian closed categories if the weight sequence $M=(M_k)$ has some regularity properties:
This has been developed in \cite{KMRc}, \cite{KMRq}, and \cite{KMRu}.
 
\begin{hypothesis*}
  From now on we assume that the weight sequence $M=(M_k)$ has the following properties, cf.\ Subsection \ref{ssec:ws}:
\begin{enumerate}
  \item\label{DC.1} $1 =M_0 \le M_k \le M_{k+1}$ for all $k$. 
  \item\label{DC.2} $M$ is log-convex. 
  \item\label{DC.3} $M$ has moderate growth.   
\end{enumerate}
In the Beurling case $C^{[M]} = C^{(M)}$ we also require that 
\begin{enumerate}
  \item\label{DC.4} $C^\om\subseteq C^{(M)}$, 
        or equivalently $M_k^{1/k} \to \infty$ or $M_{k+1}/M_k\to \infty$.
\end{enumerate}  
\end{hypothesis*}

A locally convex space $E$ is called \textbf{convenient} if it is \textbf{$c^\infty$-complete}, i.e., the following equivalent conditions
hold:
\begin{itemize}
   \item A curve $c : \R \to E$ is  $C^\infty$ if and only if $\ell \o c : \R\to \R$ is $C^\infty$ for all $\ell \in E^*$.
   \item Any Mackey--Cauchy sequence converges in $E$.
   \item $E_B$ is a Banach space for all $B \in \sB(E)$.
\end{itemize} 
We equip $E$ with the \textbf{$c^\infty$-topology}, i.e., the final topology with respect to any of the following sets of mappings:
\begin{itemize}
   \item $C^\infty(\R,E)$.
   \item The Mackey-convergent sequences in $E$.
   \item The injections $i_B: E_B \to E$, where $B \in \sB(E)$.
\end{itemize}

For convenient vector spaces $E$ and $F$ and $c^\infty$-open $U \subseteq E$ we define:
\[
  \CM(U,F) := 
  \Bigl\{f\in C^\oo(U,F):
  \forall \ell\; \forall B: \ell \o f \o i_B \in \CM(U_B,\R)\Bigr\}, 
\]
where $\ell \in F^*$, $B\in \sB(E)$, and
$U_B := i_B^{-1}(U)$.
We equip $\CM(U,F)$ with the
initial locally convex structure induced by all linear mappings 
\begin{align*}
\CM(U,F) &\East{\CM(i_B,\ell)}{}  \CM(U_B, \mathbb R), \quad f \mapsto \ell\o f\o i_B.
\end{align*}
Then $\CM(U,F)$ is a
convenient vector space.

The class of all $\CM$-mapping between convenient vector spaces forms a cartesian closed category:

\begin{theorem*}[$\CM$-exponential law, {\cite[5.2]{KMRu}}]
    For convenient vector spaces $E,F,G$ and $c^\infty$-open subsets $U \subseteq E$, $V \subseteq F$,
    \[
      \CM(U \times V,G) \cong \CM(U,\CM(V,G))
    \]
    is a linear $\CM$-diffeomorphism.
\end{theorem*}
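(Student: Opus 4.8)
The plan is to exhibit the canonical flip
\[
  f \mapsto f^\vee, \qquad f^\vee(u)(v) := f(u,v),
\]
as a linear bijection whose forward and backward seminorm estimates run in both directions, and then to observe that a bounded linear bijection between convenient vector spaces is automatically a $\CM$-diffeomorphism (a bounded linear map is its own first derivative and has vanishing derivatives of order $\ge 2$, hence trivially lies in $\CM$). This reduces the theorem to a statement about comparing the two families of seminorms. Using the initial (projective) description of the classes — testing membership in $\CM(U \times V, G)$ and in $\CM(U,\CM(V,G))$ against functionals $\ell \in G^*$ and against the inclusions $i_B$, and using that products $B_1 \times B_2$ are cofinal in $\sB(E \times F)$ — I would first reduce to the case where $E,F$ are Banach spaces and $G = \R$. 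The classical $C^\infty$-exponential law of convenient calculus already supplies the underlying bijection at the level of smooth maps, so what remains is to match the $[M]$-growth conditions on the two sides.

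The heart of the matter is the equivalence of two families of derivative estimates. Writing $\p_1,\p_2$ for the derivatives in the $U$- and $V$-directions, local boundedness of $f \in \CM(U\times V,\R)$ on a compact $K \times L$ gives a \emph{diagonal} estimate in the total order $k+j$,
\[
  \|\p_1^k \p_2^j f(u,v)\| \le C\,\rh^{k+j}\,(k+j)!\,M_{k+j},
\]
whereas $f^\vee \in \CM(U,\CM(V,\R))$ demands, for each compact $L$ and each $\si$, a \emph{product} estimate $\|\p_1^k \p_2^j f(u,v)\| \le C'\,\rh'^{\,k}\si^{j}\,k!\,j!\,M_k M_j$. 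The passage from the diagonal to the product estimate is exactly where moderate growth \eqref{eq:mg0} enters: combining $M_{k+j} \le H^{k+j} M_k M_j$ with $(k+j)! \le 2^{k+j} k!\,j!$ converts the diagonal bound into the product bound after a geometric summation in $j$ (choosing $\si$ large relative to $2H\rh$, yielding $\rh' = 2H\rh$). The converse passage comes for free from the log-convexity consequences $k!\,j! \le (k+j)!$ and $M_k M_j \le M_{k+j}$, the latter being \eqref{eq:alg}. Thus moderate growth and log-convexity together make the two estimate families equivalent.

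Assembling these, I would argue that $f^\vee$ takes values in $\CM(V,\R)$ (the $j$-estimate at fixed $u$) and is itself $\CM$ into the convenient space $\CM(V,\R)$ (the mixed estimate, reorganized through the projective structure of $\CM(V,\R)$ so that $\CM$-membership into it is tested on the defining seminorms). The smoothness backbone is handed over by the $C^\infty$-exponential law, and the computation above upgrades $C^\infty$ to $\CM$ with explicit control of the new constant; running the estimates in reverse shows the inverse $g \mapsto g^\wedge$ is likewise well defined and bounded. Since the flip is visibly linear and the seminorm estimates are two-sided, it is a bornological isomorphism, hence a $\CM$-diffeomorphism. I expect the principal difficulty to be the careful handling of the nested convenient structure in the reduction step — justifying that $\CM$-membership of $f^\vee$ as a map into the non-Banach space $\CM(V,\R)$ may be tested on the finitely generated seminorms, and that the duals and bounded sets interact as required — together with the uniformity bookkeeping in the mixed-derivative estimate. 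The combinatorial conversion between diagonal and product bounds is routine once \eqref{eq:mg0} and \eqref{eq:alg} are in hand; the real subtlety lies in threading it through the projective definitions without circularity.
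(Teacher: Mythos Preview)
The paper does not prove this theorem; it merely quotes it from \cite[5.2]{KMRu} as background for the convenient setting. So there is no in-paper proof to compare against.

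That said, your sketch is sound and is in fact the skeleton of the argument in \cite{KMRu}: reduce via the initial structure to Banach source spaces and scalar target, invoke the $C^\infty$-exponential law for the underlying bijection, and then translate between the ``diagonal'' estimate $\|\p_1^k\p_2^j f\|\le C\rh^{k+j}(k+j)!\,M_{k+j}$ and the ``product'' estimate $\|\p_1^k\p_2^j f\|\le C'\rh_1^k\rh_2^j\,k!\,j!\,M_kM_j$ using moderate growth in one direction and \thetag{\ref{ssec:ws}.\ref{eq:alg}} together with $k!\,j!\le(k+j)!$ in the other. Two points deserve more care than you give them. First, your phrasing ``choosing $\si$ large relative to $2H\rh$'' is Roumieu-specific; in the Beurling case the quantifier structure is reversed and you must show that for \emph{every} target radius you can find a suitable source radius, which works but needs to be said. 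Second, the step where you test $\CM$-membership of $f^\vee$ as a map into the non-Banach space $\CM(V,\R)$ is exactly where the $\CM_b$ versus $\CM$ distinction (Lemma~\ref{ssec:localDC}) and the description of bounded sets in $\CM(V,\R)$ become essential; in the Roumieu case this uses the compact regularity of the relevant inductive limits. You correctly flag this as the delicate part, but ``reorganized through the projective structure'' undersells the work involved---this is where most of the effort in \cite{KMRu} actually lies.
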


The proof of this theorem (and convenient calculus in general) work uniformly over all classes $\CM$,
including the class of real analytic mappings as $C^\om=C^{\{1,1,1,\dots\}}$. The real analytic convenient 
setting was first developed in a different approach in \cite{KrieglMichor90}.

A mapping $f$ between $c^\infty$-open subsets of convenient vector spaces is $\CM$ if and only if it 
respects \textbf{$\CM$-(Banach) plots}, i.e., $\CM$-mappings defined on open balls in Banach spaces. 
Actually, it is enough that $f$ takes $\CM_b$-plots to $\CM$-plots. 

By $\CM_b$ we denote the respective classes defined by boundedness conditions: 
\[
  \CM_b(U,F) := \Big\{f \in C^\infty(U,F) : \forall B ~\forall K ~\Box \rh>0 : \set^M_{K,\rh}(f) \text{ is bounded in } F\Big\}
\]
where $B \in \sB(E)$, $K \in \sK(U_B)$, and (for each subset $K \subseteq U_B$)
\begin{equation}\label{eq:set}
  \set^M_{K,\rh}(f) := \Big\{\frac{f^{(k)}(x)(v_1,\dots,v_k)}{\rh^k\, k!\, M_k}:k\in \mathbb N,x\in K,\|v_i\|_E\leq 1\Big\}.
\end{equation}
In general, $\CM_b(U,F) \subsetneq \CM(U,F)$ and there is no $\CM_b$-exponential law, but:

\begin{lemma*}[{{\cite[4.4,~4.5]{KMRu}}}] 
  We have $\CM_b(U,F) = \CM(U,F)$ in any of the following situations: 
  \begin{itemize}
   \item The Beurling case $[M] = \bM$.
   \item $E$ and $F$ are Banach spaces.
   \item There exists a Baire vector space topology on the dual $F^*$ for which $\on{ev}_x$ is continuous for all $x \in F$. 
\end{itemize}  
\end{lemma*}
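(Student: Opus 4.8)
The plan is to rewrite both membership conditions in terms of the scalar quantities $\|\ell\o f\o i_B\|^M_{K,\rh}$ and then compare the order of quantifiers. Since the excerpt already records $\CMb(U,F)\subseteq\CM(U,F)$ in general, it suffices to prove the reverse inclusion under each hypothesis. The key translation is Mackey's theorem: a subset $S\subseteq F$ of a locally convex (hence of a convenient) space is bounded if and only if it is weakly bounded, i.e.\ $\ell(S)$ is bounded for every $\ell\in F^*$. Applying this to $S=\set^M_{K,\rh}(f)$ and using $\ell\bigl((f\o i_B)^{(k)}(x)(v_1,\dots,v_k)\bigr)=(\ell\o f\o i_B)^{(k)}(x)(v_1,\dots,v_k)$, boundedness of $\set^M_{K,\rh}(f)$ in $F$ is equivalent to $\|\ell\o f\o i_B\|^M_{K,\rh}<\infty$ for all $\ell\in F^*$. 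Thus $f\in\CMb$ reads $\forall B\,\forall K\,\Box\rh\,\forall\ell$, while $f\in\CM$ reads $\forall\ell\,\forall B\,\forall K\,\Box\rh$, the inner assertion being $\|\ell\o f\o i_B\|^M_{K,\rh}<\infty$. In the Beurling case $\Box=\forall$, so both strings read $\forall\ell\,\forall B\,\forall K\,\forall\rh$ and the conditions coincide verbatim; this disposes of the first bullet.

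In the Roumieu case $\Box=\exists$, and the only discrepancy is that $\CM$ permits the radius $\rh$ to depend on $\ell$, whereas $\CMb$ demands a single $\rh$ uniform in $\ell$. Fix $B\in\sB(E)$ and $K\in\sK(U_B)$. For each $\ell\in F^*$ put $t_k(\ell):=\sup\{|\ell(\xi)|:\xi\in T_k\}$, where $T_k\subseteq F$ is the balanced set of all values $(f\o i_B)^{(k)}(x)(v_1,\dots,v_k)/(k!\,M_k)$ with $x\in K$, $\|v_i\|\le1$; then $t_k$ is a seminorm on $F^*$, finite at every $\ell$ because $\ell\o f\o i_B\in\CM(U_B,\R)$. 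The hypothesis $f\in\CM$ says precisely that each $\ell$ admits $\rh_\ell,C_\ell$ with $t_k(\ell)\le C_\ell\,\rh_\ell^{\,k}$ for all $k$, whence $F^*=\bigcup_n B_n$ with $B_n:=\{\ell:t_k(\ell)\le n^{k+1}\ \forall k\}$. Each $B_n$ is absolutely convex and, since $B_n=\bigcap_k\bigcap_{\xi\in T_k}\{\ell:|\ev_\xi(\ell)|\le n^{k+1}\}$, it is closed as soon as every evaluation $\ev_\xi$, $\xi\in F$, is continuous for the given topology on $F^*$. As $F^*$ is Baire, some $B_{n_0}$ has nonempty interior; being absolutely convex it is then a $0$-neighborhood, hence absorbing, so every $\ell$ lies in $\mu_\ell B_{n_0}$ for some $\mu_\ell>0$. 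This gives $t_k(\ell)\le\mu_\ell\,n_0^{k+1}$ and therefore $\sup_k n_0^{-k}t_k(\ell)\le\mu_\ell n_0<\infty$, i.e.\ every $\ell$ is bounded on $\set^M_{K,n_0}(f)$. By Mackey's theorem $\set^M_{K,n_0}(f)$ is bounded in $F$, with the \emph{uniform} radius $\rh:=n_0$, so $f\in\CMb$; this proves the third bullet.

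Finally, when $F$ is a Banach space its dual $F^*$ with the norm topology is complete, hence Baire, and each $\ev_x$ is norm-continuous, so the case ``$E$ and $F$ Banach'' is an instance of the Baire situation (the assumption that $E$ is Banach only serves to identify the convenient estimates with the genuine operator-norm estimates $\|f\|^M_{K,\rh}$, the per-$B$ argument being insensitive to the nature of $E$). The main obstacle is exactly the Roumieu uniformity step: extracting a single radius from an $\ell$-dependent one. This is where Baire category is essential, and where the two structural hypotheses do their work — continuity of the evaluations $\ev_\xi$ to guarantee that the level sets $B_n$ are closed, and the Baire property of $F^*$ to force one of them to have interior. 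The Beurling and Banach cases are then, respectively, a trivial quantifier coincidence and a degenerate instance of this same mechanism.
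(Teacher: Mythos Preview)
The paper does not supply its own proof of this lemma; it is quoted verbatim from \cite[4.4,~4.5]{KMRu}, so there is no in-paper argument to compare against. Your proof is correct and is essentially the standard one: reduce boundedness of $\set^M_{K,\rh}(f)$ to weak boundedness via Mackey's theorem, observe that the Beurling case is a trivial quantifier permutation, and in the Roumieu case run a Banach--Steinhaus argument on the countable exhaustion $F^*=\bigcup_n B_n$ with $B_n=\{\ell:t_k(\ell)\le n^{k+1}\ \forall k\}$, using continuity of the evaluations to get closedness and the Baire property to extract a uniform radius $n_0$. This is exactly the mechanism behind the cited result in \cite{KMRu}. Your remark that the hypothesis on $E$ in the second bullet is not really needed for the argument is also correct: once $F$ is Banach, $F^*$ with the norm topology already satisfies the Baire hypothesis of the third bullet, and your per-$B$ argument goes through for arbitrary convenient $E$.
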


In special cases $\CrM$-regularity can be tested along curves:
\begin{itemize}
  \item For non-quasianalytic $M=(M_k)$, the mapping $f$ is $C^{\{M\}}$ if it maps $C^{\{M\}}$-curves to 
        $C^{\{M\}}$-curves, by \cite{KMRc}.
  \item For certain quasianalytic $M=(M_k)$, the mapping $f$ is $C^{\{M\}}$ if it maps $C^{\{N\}}$-curves 
        to $C^{\{N\}}$-curves, for all non-quasianalytic log-convex $N=(N_k)$ which are larger than 
        $M=(M_k)$, by \cite{KMRq}. Real analytic  functions are not among these classes.
\end{itemize}

On open sets in Banach spaces, $C^{[M]}$-vector fields have $C^{[M]}$-flows, 
and between Banach spaces, the $C^{[M]}$ implicit function theorem holds, see \cite[9.2]{KMRu}, 
\cite{Yamanaka89}, and \cite{Yamanaka91}.

For $M=(M_k)$ and $N=(N_k)$ we have
\begin{align*}
 C^{[M]} \subseteq C^{[N]} \quad &\Leftrightarrow \quad 
 M \preceq N \quad :\Leftrightarrow \quad  \exists C,\rh > 0 ~\forall k : M_k \le C \rh^k N_k   \\ 
 C^{[M]} = C^{[N]} \quad &\Leftrightarrow \quad  
 M \approx N \quad :\Leftrightarrow \quad M \preceq N \text{ and } N \preceq M \\
 C^{\{M\}} \subseteq C^{(N)} \quad &\Leftrightarrow \quad 
 M \lhd N \quad :\Leftrightarrow \quad \forall \rh>0 ~\exists C>0 ~\forall k : M_k \le C \rh^k N_k    
\end{align*}
In particular, $C^\om \subseteq C^{\{M\}} \Leftrightarrow \cH(\C^n) \subseteq C^{(M)}(U) ~\forall U\subseteq \R^n 
  \Leftrightarrow \varliminf M_k^{\frac{1}{k}}>0$ and $C^\om \subseteq C^{(M)} \Leftrightarrow \lim M_k^{\frac{1}{k}} = \infty$.

\subsection{Groups of $C^\om$ or $\CM$ diffeomorphisms on a compact manifold}

Every compact $C^1$ manifold $A$ carries a unique $C^1$-diffeo\-mor\-phic real analytic structure.
Let $\Diff^\om(A)$ be the real analytic regular Lie group of all 
real analytic diffeomorphisms of $A$, with the real analytic structure described in 
\cite[8.11]{KrieglMichor90}, see also 
\cite[theorem 43.4]{KM97}.

For every $M=(M_k)$ the group 
$\Diff^{[M]}(A)$ of 
$C^{[M]}$-diffeo\-mor\-phisms of $A$ is a regular $C^{[M]}$-group (but not better), by 
\cite[6.5]{KMRc}, \cite[5.6]{KMRq}, and \cite[9.8]{KMRu}.

\section{Classes of test functions} \label{sec:test}

In the following let $E, F$ be Banach spaces, and let $U \subseteq E$ be open.
Later on in this paper we shall only be concerned with the case that $E$ and $F$ are finite dimensional,
but treating the general case at this point does not complicate the presentation and will 
be useful for reference in other papers. 

In this section $M=(M_k)$ and $L=(L_k)$ are any positive sequences.

\subsection{Smooth functions with globally bounded derivatives}

Consider
\[
  \cB(U,F) := \big\{f \in C^\infty(U,F) : \|f\|^{(k)}_{U} < \infty \text{ for all } k\in \N\big\},
\]
where
\[
   \|f\|^{(k)}_{U} := \sup_{x \in U} \|f^{(k)}(x)\|_{\Lin^k(E;F)}, 
\]
with its natural Fr\'echet topology.

\subsection{Rapidly decreasing Schwartz functions}

Consider  
\[
  \cS(E,F) := \big\{f \in C^\infty(E,F) : \|f\|^{(p,q)}_{E} < \infty \text{ for all } p,q \in \N\big\},
\]
where 
\[
  \|f\|^{(p,q)}_{E} := \sup_{x \in E}  (1+\|x\|)^p \|f^{(q)}(x)\|_{\Lin^q(E;F)},
\]
with its natural Fr\'echet topology.

\subsection{Global Denjoy--Carleman classes}

Let $\rh>0$.
Consider the Banach space 
\[
  \cB^M_\rh(U,F) := \big\{f \in C^\infty(U,F) : \|f\|_{U,\rh}^M < \infty\big\},
\]
where
\[
  \|f\|_{U,\rh}^M := \sup_{\substack{k \in \N\\ x \in U}} \frac{\|f^{(k)}(x)\|_{\Lin^k(E;F)}}{\rh^k k! M_k}.
\]
We define the Fr\'echet space 
\[
  \cB^{\bM}(U,F) := \varprojlim_{n \in \N} \cB^M_{\frac{1}{n}}(U,F) 
\]
and  
\[
  \cB^{\rM}(U,F) := \varinjlim_{n \in \N} \cB^M_{n}(U,F)
\]
which is a compactly regular (LB)-space and thus {($c^\oo$-)}complete, webbed, and (ultra-)bornological; 
see Lemma \ref{lem:cpreg} below.

\subsection{Gelfand--Shilov classes}

Let $\si>0$.
Consider the Banach space 
\[
  \cS_{L,\si}^M(E,F) := \big\{f \in C^\infty(E,F) : \|f\|_{E,\si}^{L,M} < \infty\big\}.
\]
with the norm
\[
  \|f\|_{E,\si}^{L,M} := \sup_{\substack{p,q \in \N\\ x \in E}}  \frac{(1+\|x\|)^p \|f^{(q)}(x)\|_{\Lin^q(E;F)}}{\si^{p+q}\, p! q!\, L_p M_q}.
\]
We define the Fr\'echet space 
\[
  \SbLM(E,F) := \varprojlim_{n \in \N} \cS^M_{L,\frac{1}{n}}(E,F) 
\]
and  
\[
  \SrLM(E,F) := \varinjlim_{n \in \N} \cS^M_{L,n}(E,F)
\]
which is a compactly regular (LB)-space and thus {($c^\oo$-)}complete, webbed, and (ultra-)bornological; 
see Lemma \ref{lem:cpreg} below.

\subsection{Smooth functions with globally $p$-integrable derivatives}

In this context we assume that $E$ and $F$ are finite dimensional.
For $p \in [1,\infty]$, consider the space 
\begin{align*}
  \Sp(\R^m,\R) &= \Sp(\R^m) = \bigcap_{k \in \N} W^{k,p}(\R^m) \\ 
    &= \big\{f \in C^\infty(\R^m) : \|f^{(\al)}\|_{L^p(\R^m)} < \infty \text{ for all } \al \in \N^m\big\}  
\end{align*}
with its natural Fr\'echet topology (cf.\ \cite[p.~199]{Schwartz66}), and set
\[
  \Sp(\R^m,\R^n) := (\Sp(\R^m,\R))^n.  
\]
The most important case is $p=2$:
\[
  W^{\infty,2}(\R^m,\R^n) = \Hoo(\R^m,\R^n).
\]
Note that $W^{\infty,\infty}(\R^m,\R^n) =\cB(\R^m,\R^n)$, so henceforth we restrict ourselves to the case $p \in [1,\infty)$.

\subsection{Sobolev--Denjoy--Carleman classes}

Let $p \in[1,\infty)$ and $\rh>0$.
Consider the Banach space 
\[
  W^{M,p}_{\si}(\R^m,\R) := \big\{f \in C^\infty(\R^m) : \|f\|_{\R^m,\si}^{M,p} < \infty\big\},
\]
where
\[
  \|f\|_{\R^m,\si}^{M,p} := \sup_{\al \in \N^m} \frac{\|f^{(\al)}\|_{L^p(\R^m)}}{\si^{|\al|}\, |\al|!\, M_{|\al|}}.
\]
We define the Fr\'echet space 
\[
  \SpbM(\R^m,\R) := \proj_{n \in \N} W^{M,p}_{\frac{1}{n}}(\R^m,\R) 
\]
and  
\[
  \SprM(\R^m,\R) := \ind_{n \in \N} W^{M,p}_{n}(\R^m,\R)
\]
which is a compactly regular (LB)-space and thus {($c^\oo$-)}complete, webbed, and (ultra-)bornological; 
see Lemma \ref{lem:cpreg} below.
We set
\[
   \SpM(\R^m,\R^n) := (\SpM(\R^m,\R))^n.
\]

\subsection{Smooth functions with compact support}
We denote by $\cD(\R^m)=\cD(\R^m,\R)$ the nuclear (LF)-space of smooth functions on $\R^m$ with compact support, and set  
\[
   \cD(\R^m,\R^n) := (\cD(\R^m,\R))^n.
\]

\subsection{Denjoy--Carleman functions with compact support}

We define 
\[
  \DM(\R^m) := \CM(\R^m) \cap \cD(\R^m) = \BM(\R^m) \cap \cD(\R^m) 
\]
which is non-trivial only if $M=(M_k)$ is non-quasianalytic.
We equip $\DM(\R^m)$ with the following topology,
\begin{align*}
  \cD^{[M]}(\R^m) &= \varinjlim_{K \in \sK(\R^m)} \cD^{[M]}_K(\R^m)  
\end{align*}
where
\begin{align*}
   \cD^{\bM}_K(\R^m) :=  \varprojlim_{\ell \in \N} \cD^M_{K,\frac 1 \ell}(\R^m) \\
   \cD^{\rM}_K(\R^m) :=  \varinjlim_{\ell \in \N} \cD^M_{K,\ell}(\R^m)
\end{align*}
and
\[
  \cD^M_{K,\rh}(\R^m) := \{f \in C^\infty(\R^m) : \on{supp} f \subseteq K, \|f\|_{K,\rh}^M = \|f\|_{\R^m,\rh}^M < \infty\}
\]
is a Banach space.
Then $\cD^{\bM}(\R^m)$ is a (LFS)-space and $\cD^{\rM}(\R^m)$ is a Silva space, see \cite{Komatsu73}, and both are hence complete and convenient. 
Alternatively one may consider the 
topology induced by the inclusion in the diagonal of $C^{[M]}(\R^m) \times \cD(\R^m)$ or $\cB^{[M]}(\R^m) \times \cD(\R^m)$ 
which are bornologically equivalent 
to the former topologies.
We set
\[
   \DM(\R^m,\R^n) := (\DM(\R^m,\R))^n.
\]


\begin{lemma} \label{lem:cpreg}
  The inductive limits defining
  $\cB^{\rM}(U,F)$, $\SrLM(E,F)$, and $\SprM(\R^\ell,\R)$ are compactly regular.
\end{lemma}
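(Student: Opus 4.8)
The plan is to reduce the statement to the verification of Retakh's condition (M). Recall that for an (LB)-space $E=\varinjlim_{n} E_n$ with closed unit balls $B_n\subseteq E_n$, condition (M) reads: for every $n$ there is $m\ge n$ such that all steps $E_k$ with $k\ge m$ induce one and the same topology on $B_n$. It is classical that, for (LB)-spaces, condition (M) is equivalent to compact regularity (and to bounded retractivity), so it suffices to check (M) for each of the three inductive limits. Since all three have the same formal shape — an increasing union of Banach spaces whose norms are weighted suprema, resp. weighted $L^p$-norms, of derivatives, where raising the parameter from $n$ to a larger value rescales the weight at differentiation order $j$ by a factor decaying geometrically in $j$ — I would run a single uniform argument.

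Concretely, for $\cB^{\rM}(U,F)=\varinjlim_n \cB^M_n(U,F)$ the $n$-th step has unit ball $B_n=\{f:\|f\|^M_{U,n}\le 1\}$, so each $f\in B_n$ obeys $\|f^{(j)}(x)\|\le n^j\,j!\,M_j$ for all $x\in U$ and all $j$. Fix $n$ and pick any $m>n$. The key estimate is the following tail bound: for $h\in 2B_n$ and $k>m$ one has, at each order $j$,
\[
\frac{\|h^{(j)}(x)\|}{m^j\,j!\,M_j}=\Big(\frac{k}{m}\Big)^{j}\frac{\|h^{(j)}(x)\|}{k^j\,j!\,M_j},
\qquad
\frac{\|h^{(j)}(x)\|}{m^j\,j!\,M_j}\le 2\Big(\frac{n}{m}\Big)^{j}.
\]
Given $\ep>0$ I choose $J$ with $2(n/m)^{J}<\ep/2$; then the contribution of all orders $j\ge J$ to $\|h\|^M_{U,m}$ is $<\ep/2$ uniformly on $2B_n$, while the finitely many head terms $j<J$ are dominated by $(k/m)^{J}\,\|h\|^M_{U,k}$. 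Hence $\|h\|^M_{U,k}\to 0$ forces $\|h\|^M_{U,m}\to 0$ for nets in $2B_n$; together with the trivial inequality $\|\cdot\|^M_{U,k}\le\|\cdot\|^M_{U,m}$ (valid since $k>m$) this shows that $\cB^M_m$ and $\cB^M_k$ induce the same topology on $B_n$. Thus any $m>n$ witnesses condition (M). The identical computation, with the order index $j$ replaced by $p+q$ and the quantity $(1+\|x\|)^p\|f^{(q)}(x)\|$ in place of $\|f^{(j)}(x)\|$, handles $\SrLM(E,F)$; and with $j$ replaced by $|\al|$ and the norm $\|f^{(\al)}\|_{L^p}$ in place of $\|f^{(j)}(x)\|$, it handles $\SprM(\R^\ell,\R)$. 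In each case the weight supplies geometric decay in the differentiation order, which is exactly what the argument consumes.

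The main obstacle — and the reason these are genuine compactly regular (LB)-spaces rather than Silva spaces, as in the compact-support case $\DrM$ — is the non-compactness of the domains $U$ and $\R^\ell$: the suprema (resp.\ $L^p$-norms) run over a non-compact set, so one cannot gain compactness of the bonding maps from an Arzel\`a--Ascoli argument. The point is that compact regularity does \emph{not} require compact linking maps; it only requires that each higher step collapse the topology on a fixed lower ball, and this follows purely from the geometric decay in the differentiation order carried by the weights $\rh^{j}$. The only place demanding mild care is the claim that the finite-order head defines the same topology from $\cB^M_m$ and $\cB^M_k$, but since for $j<J$ the two coefficients differ merely by the bounded factors $(k/m)^{j}\le(k/m)^{J}$, this is immediate, and the verification of condition (M) is complete.
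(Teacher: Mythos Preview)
Your proof is correct and follows essentially the same route as the paper: both reduce to Retakh's condition (M) via \cite{Neus78}, and both verify it by splitting the supremum defining the norm into a head (finitely many differentiation orders, controlled by the larger step $k$ up to a bounded factor $(k/m)^{J}$) and a tail (where membership in the lower ball $B_n$ together with the geometric decay $(n/m)^{j}$ makes the contribution uniformly small). The paper phrases the verification as a neighborhood inclusion $U^{\sigma_2}_\delta(f)\cap U^{\sigma}_1\subseteq U^{\sigma_1}_\epsilon(f)$ with the explicit choice $\sigma_1=2\sigma$, while you phrase it as equality of induced net-convergence on $B_n$; these are the same argument.
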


\begin{proof}
  It suffices by \cite[Satz 1]{Neus78} to verify condition (M) of \cite{Retakh70}:
  There exists a sequence of increasing 0-neighborhoods $U_n\subs \cB^M_n(U,F)$ (resp.\ $\cS^M_{L,n}(E,F)$, 
  or $W^{M,p}_{n}(\R^\ell,\R)$), such that for 
  each $n$ there exists an $m\geq n$ for which the topologies of $\cB^M_k(U,F)$ (resp.\ $\cS^M_{L,k}(E,F)$, or $W^{M,p}_{k}(\R^\ell,\R)$) and
  of $\cB^M_m(U,F)$ (resp.\ $\cS^M_{L,m}(E,F)$, or $W^{M,p}_{m}(\R^\ell,\R)$) coincide on $U_n$ for all $k\geq m$.

  Let us write $\|~\|_\si$ for either $\|~\|_{U,\si}^M$, $\|~\|_{E,\si}^{L,M}$, or $\|~\|_{\R^\ell,\si}^{M,p}$.
  For $\si' \ge \si$ we have $\|f\|_{\si'} \le \|f\|_{\si}$. 
  So consider the $\ep$-balls $U^{\si}_\ep(f):=\{g:\|g-f\|_{\si}\leq \ep\}$ in $\cB^M_\si(U,F)$ (resp.\ $\cS^M_{L,\si}(E,F)$, 
  or $W^{M,p}_{\si}(\R^\ell,\R)$).
  It suffices to show that for $\si >0$, $\si_1:=2\si$, $\si_2>\si_1$, $\ep>0$, and 
  $f\in U^{\si}_1:=U^{\si}_1(0)$ there exists $\de>0$ such that 
  $U^{\si_2}_\de(f)\cap U^{\si}_1\subs U^{\si_1}_\ep(f)$.
  
  \paragraph{\bf Case $\cA \in \{\BM,\SrLM\}$}
  We prove this for $\SrLM$, however the following arguments also give a proof for $\BrM$ if we 
  set $p = 0$, take the supremum over $x \in U$, and agree that $L_0=1$.
  Since $f\in U^{\si}_1$ we have
        \[
                \|f\|^{(p,q)}_{E} = \sup_{x\in E} (1+\|x\|)^p \|f^{(q)}(x)\|_{\Lin^q(E;F)}\leq p!q!\si^{p+q} L_p M_q \text{ for all }p,q.
        \]
  Let $\frac1{2^N}<\frac{\ep}2$ and $\de:=\ep\,\big(\frac{\si_1}{\si_2}\big)^{N-1}$.
  Let $g\in U^{\si_2}_\de(f)\cap U^{\si}_1$, i.e.,
        \begin{align*}
                \|g\|^{(p,q)}_{E} &\leq p!q!\si^{p+q} L_p M_q \quad \text{ and }\\
                \|g- f\|^{(p,q)}_{E} &\leq \de\,p!q!\si_2^{p+q} L_p M_q   \quad \text{ for all }p,q.
        \end{align*}
  Then
        \begin{align*}
                \|g-f\|^{(p,q)}_{E} &\leq \|g\|^{(p,q)}_E+\|f\|^{(p,q)}_E \leq 2\, p!q!\si^{p+q} L_p M_q
                =2\,p!q! \si_1^{p+q} L_p M_q\,\frac1{2^{p+q}} \\
                &< \ep\, p!q! \si_1^{p+q} L_p M_q\quad\text{ for }p+q\geq N\\
        \intertext{and}
                \|g-f\|^{(p,q)}_E&\leq  \de\,p!q!\si_2^{p+q} L_p M_q
                \leq \ep\, p!q!\si_1^{p+q} L_p M_q
                \quad\text{ for }p+q<N,
        \end{align*}
  which proves the statement.   

  \paragraph{\bf Case $\cA = \SprM$} The same arguments work for $\SprM$:
  For $N$ and $\de$ as above, $f\in U^{\si}_1$ and $g\in U^{\si_2}_\de(f)\cap U^{\si}_1$ give         
  \begin{align*}
    \|f^{(\al)}\|_{L^p(\R^\ell)} &\leq |\al|!\, \si^{|\al|} M_{|\al|}  \quad \text{ and } \\
    \|g^{(\al)}\|_{L^p(\R^\ell)} &\leq |\al|!\, \si^{|\al|} M_{|\al|} \quad \text{ and }\\
    \|g^{(\al)}- f^{(\al)}\|_{L^p(\R^\ell)} &\leq \de\, |\al|!\, \si_2^{|\al|} M_{|\al|}   \quad \text{ for all } \al,
  \end{align*}
  and hence
        \begin{align*}
                \|g^{(\al)}- f^{(\al)}\|_{L^p(\R^\ell)} &\leq \|g^{(\al)}\|_{L^p(\R^\ell)} + \|f^{(\al)}\|_{L^p(\R^\ell)} 
                \leq 2\, |\al|!\, \si^{|\al|} M_{|\al|}\\
                &=\frac{\si_1^{|\al|}}{2^{|\al|-1}}\, |\al|!\, M_{|\al|} 
                < \ep\, \si_1^{|\al|}\, |\al|!\,  M_{|\al|}, \quad\text{ for }|\al|\geq N,\\
                \|g^{(\al)}- f^{(\al)}\|_{L^p(\R^\ell)} &\leq \de\,  \si_2^{|\al|}\, |\al|!\, M_{|\al|}
                \leq \ep\, \si_1^{|\al|}\, |\al|!\, M_{|\al|},
                \quad\text{ for }|\al|<N,
        \end{align*}
  as required.    
\end{proof}

\section{Inclusions} \label{sec:incl}

Let $M=(M_k)$ and $L=(L_k)$ be any positive sequences, where $L_k\ge 1$ for all $k$. 

\begin{proposition} \label{prop:incl}
  Let $E,F$ be Banach spaces.
  We have the following inclusions. 
    \[
      \xymatrix{
        \cS(E,F) \ar@{{ >}->}[r]  & \cB(E,F) \ar@{{ >}->}[r] & C^\infty(E,F) \\
        \SrLM(E,F) \ar@{{ >}->}[r] \ar@{{ >}->}[u]  & \BrM(E,F) \ar@{{ >}->}[u] \ar@{{ >}->}[r] & \CrM(E,F) \ar@{{ >}->}[u]\\
        \SbLM(E,F) \ar@{{ >}->}[r] \ar@{{ >}->}[u]  & \BbM(E,F) \ar@{{ >}->}[u] \ar@{{ >}->}[r] & \CbM(E,F) \ar@{{ >}->}[u]
      }
    \]
  In the next diagram we omit the source $\R^m$ and the target $\R^n$, i.e., we write $\cA$ instead of $\cA(\R^m,\R^n)$.  
  Let $1 \le p<q <\infty$.
  For the inclusions marked by $*$ we assume that $M=(M_k)$ is derivation closed. 
    \[
      \xymatrix{
        \cD~ \ar@{{ >}->}[r] & \cS~ \ar@{{ >}->}[r] & \Sp~ \ar@{{ >}->}[r] & W^{\infty,q}~ \ar@{{ >}->}[r] 
        & \cB~ \ar@{{ >}->}[r] & C^\infty \\
        \DrM~ \ar@{{ >}->}[r] \ar@{{ >}->}[u] & \SrLM~ \ar@{{ >}->}[r] \ar@{{ >}->}[u] & \SprM~ \ar@{{ >}->}[r]^{*} \ar@{{ >}->}[u] 
        & W^{\rM,q}~ \ar@{{ >}->}[r]^{*} \ar@{{ >}->}[u] 
        & \BrM~ \ar@{{ >}->}[u] \ar@{{ >}->}[r] & \CrM~ \ar@{{ >}->}[u]\\
        \DbM~ \ar@{{ >}->}[r] \ar@{{ >}->}[u] &\SbLM~ \ar@{{ >}->}[r] \ar@{{ >}->}[u] & \SpbM~ \ar@{{ >}->}[r]^{*} \ar@{{ >}->}[u] 
        & W^{\bM,q}~ \ar@{{ >}->}[r]^{*} \ar@{{ >}->}[u] 
        & \BbM~ \ar@{{ >}->}[u] \ar@{{ >}->}[r] & \CbM~ \ar@{{ >}->}[u]
      }
    \]
   All inclusions are continuous. 
   If the target is $\R$ (or $\C$) then all spaces are algebras, provided that $M=(M_k)$ is weakly log-convex,
   and each space in 
   \[
      \xymatrix{
        \cD(\R^m)~ \ar@{{ >}->}[r] & \cS(\R^m)~ \ar@{{ >}->}[r] & \Sp(\R^m)~ \ar@{{ >}->}[r] & W^{\infty,q}(\R^m)~ \ar@{{ >}->}[r] 
        & \cB(\R^m) 
      }
    \]
    is a $\cB(\R^m)$-module, and thus an ideal in each space on its right, likewise 
    each space in 
   \[
      \xymatrix{
        \DM(\R^m) \ar@{{ >}->}[r] & \SLM(\R^m) \ar@{{ >}->}[r] & \SpM(\R^m) \ar@{{ >}->}[r] & W^{[M],q}(\R^m) \ar@{{ >}->}[r] 
        & \BM(\R^m) 
      }
    \]
    is a $\BM(\R^m)$-module, and thus an ideal in each space on its right.  
\end{proposition}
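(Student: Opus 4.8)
The plan is to split the many arrows into three groups: the ``seminorm-domination'' inclusions, the integrability-raising inclusions marked by $*$, and the algebraic statements. Most arrows follow by estimating the defining (semi)norms of the target by those of the source, uniformly, whence continuity. In the first diagram: setting the weight exponent to $0$ (using $L_0=1$) turns the Gelfand--Shilov norm into the global Denjoy--Carleman norm, giving $\SLM\hookrightarrow\BM$; freezing a pair $(p,q)$, resp.\ an order $q$, bounds a single $\cS$- resp.\ $\cB$-seminorm by the Gelfand--Shilov resp.\ $\BM$-norm, giving the vertical arrows; passing from a global to a local bound gives $\BM\hookrightarrow\CM$; and each Beurling space sits inside its Roumieu companion since a universal quantifier over $\rh$ is stronger than the existential one, the resulting map being continuous into the inductive limit by its universal property. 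The same bookkeeping handles the vertical arrows of the second diagram and the arrows $\DM\hookrightarrow\SLM$ and $\SLM\hookrightarrow\SpM$: on a fixed compact support the factor $(1+\|x\|)^p$ is bounded, while for $\SLM\hookrightarrow\SpM$ one fixes a single power $r>m/p$ and uses $|\p^\al f(x)|\le \|f\|\,(1+\|x\|)^{-r}\si^{r+|\al|}r!\,|\al|!\,L_r M_{|\al|}$ to make every derivative $L^p$-integrable with the correct weight.

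\emph{The $*$-inclusions.} These raise the integrability exponent and are the technical heart. For $\Sp\hookrightarrow W^{\infty,q}\hookrightarrow\cB$ I would use the Sobolev embedding $W^{k,p}\hookrightarrow L^\infty$ for $k>m/p$ to bound $\|\p^\al f\|_{L^\infty}$ by finitely many $\|\p^{\al+\be}f\|_{L^p}$ with $|\be|\le k$, then interpolate via $\|\p^\al f\|_{L^q}\le\|\p^\al f\|_{L^\infty}^{1-p/q}\|\p^\al f\|_{L^p}^{p/q}$; no growth condition on $M=(M_k)$ is needed here. For the Denjoy--Carleman analogues $\SpM\hookrightarrow W^{[M],q}\hookrightarrow\BM$ the same two steps apply, but the $\le k$ extra derivatives shift the weight from $M_{|\al|}$ to $M_{|\al|+|\be|}$. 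This is exactly where derivation-closedness enters: by \thetag{\ref{ssec:ws}.\ref{eq:dc1}}, for $|\be|\le k$ fixed, $(|\al|+|\be|)!\,M_{|\al|+|\be|}\le C^{|\be|(|\al|+|\be|)}\,|\al|!\,M_{|\al|}$ absorbs the shift into a factor geometric in $|\al|$, i.e.\ into a change of $\si$; the interpolation then recombines $|\al|!\,M_{|\al|}$ exactly, and reading off the constants gives continuity. I expect this index-shift bookkeeping to be the main obstacle.

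\emph{Algebras, modules, ideals.} For the sup-norm spaces I would apply the Leibniz rule together with $i!\,M_i\,j!\,M_j\le(i+j)!\,M_{i+j}$ --- which is \thetag{\ref{ssec:ws}.\ref{eq:alg}} for the weakly log-convex sequence $k!\,M_k$ --- and $\sum_{\be\le\al}\binom{\al}{\be}=2^{|\al|}$; this shows $\cB,\BM,\cS,\SLM,\DM$ are algebras (halving $\rh$ in the Beurling case). For the module statements, if $h\in\cB$ resp.\ $h\in\BM$ then its derivatives are bounded in $L^\infty$ by definition, so Leibniz applies directly: each term $\|\p^\be h\|_{L^\infty}\|\p^{\al-\be}f\|_{L^p}$ stays in $L^p$ with the correct weight, giving the $\cB$- resp.\ $\BM$-module structure, and since every space is then an ideal in the one to its right the chain of ideals follows. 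Finally, the algebra property of the Sobolev-type members $\Sp,W^{\infty,q}$ resp.\ $\SpM,W^{[M],q}$ I would deduce from the module property after embedding one factor into $\cB$ resp.\ $\BM$ by the $L^\infty$-estimate already used for the $*$-inclusions, so that the product is controlled by the module action.
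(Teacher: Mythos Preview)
Your proposal is correct and follows essentially the same route as the paper: seminorm domination for the evident arrows, the Sobolev embedding $W^{k,p}\hookrightarrow L^\infty$ combined with the interpolation $\|f\|_{L^q}\le\|f\|_{L^p}^{p/q}\|f\|_{L^\infty}^{1-p/q}$ for the $*$-inclusions, and the derivation-closedness bound \thetag{\ref{ssec:ws}.\ref{eq:dc1}} to absorb the resulting index shift $M_{|\al|}\to M_{|\al|+|\be|}$ into a change of $\si$. Your treatment of the algebra and module assertions via Leibniz and weak log-convexity is more explicit than the paper's proof, which dispatches these without detail; note only that your route to the algebra property of $\SpM$ and $W^{[M],q}$ (embed one factor into $\BM$ via the $*$-inclusion, then use the module action) tacitly imports the derivation-closedness hypothesis, so for these two spaces the algebra statement as you prove it carries that extra assumption.
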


\begin{proof}
  The first diagram is evident. 
  So are the vertical arrows in the second diagram. 
  The inclusion $\cS(\R^m,\R^n) \rightarrowtail \Sp(\R^m,\R^n)$ follows from
  \begin{equation} \label{eq:incl1}
    \int_{\R^m} |f^{(\al)}|^p\, dx = \int_{\R^m} (1+|x|)^{p(m+1)} |f^{(\al)}|^p\, \frac{dx}{(1+|x|)^{p(m+1)}} 
    \le \big(C \|f\|^{(m+1,|\al|)}_{\R^m}\big)^p,
  \end{equation}
  where $C>0$ is a constant depending only on $m$ and $p$.
  The inclusion $\Sp(\R^m,\R^n) \rightarrowtail \cB(\R^m,\R^n)$ follows from the Sobolev inequality
  \begin{equation} \label{eq:incl2}
    \sup_{x \in \R^m} |f(x)| \le C \|f\|_{W^{k,p}}, \quad k:= \lfloor \tfrac{m}{p} \rfloor +1,
  \end{equation}
  where $C$ is a constant depending only on $p$ and $m$. For $p<q$ we have, by \eqref{eq:incl2},
  \begin{align} \label{eq:incl3}
    \|f\|_{L^q} \le \|f\|_{L^p}^{p/q} \|f\|_{L^\infty}^{1-p/q} \le C \|f\|_{W^{k,p}}, 
  \end{align}
  for a constant $C$ depending only on $p$, $q$, and $m$, and hence $\Sp(\R^m,\R^n) \rightarrowtail W^{\infty,q}(\R^m,\R^n)$.

  By \eqref{eq:incl1}, we have 
  \begin{align*}
    \sup_{\al \in \N^m} \frac{\|f^{(\al)}\|_{L^p(\R^m)}}{\si^{|\al|}\, |\al|!\, M_{|\al|}} 
    \le C \sup_{\al \in \N^m} \frac{\|f\|^{(m+1,|\al|)}_{\R^m}}{\si^{|\al|}\, |\al|!\, M_{|\al|}} 
    \le C\, \si^{m+1}\, (m+1)!\, L_{m+1}\, \|f\|^{L,M}_{\R^m,\si},
  \end{align*}
  which implies the inclusion $\SLM(\R^m,\R^n) \rightarrowtail \SpM(\R^m,\R^n)$. 
  If $f \in \DM(\R^m,\R^n)$ and $\on{supp} f \subseteq B_r(0)$, then
  \begin{align*}
    \sup_{\substack{p,q \in \N\\ x \in B_r(0)}}  \frac{(1+|x|)^p \|f^{(q)}(x)\|_{\Lin^q(\R^m;\R^n)}}{\si^{p+q}\, p! q!\, L_p M_q} 
    &\le \sup_{\substack{p,q \in \N\\ x \in B_r(0)}} \frac{(1+r)^p}{\si^p\, p!\, L_p} \frac{ \|f^{(q)}(x)\|_{\Lin^q(\R^m;\R^n)}}{\si^{q}\, q!\, M_q}
    \\
    &\le C(\si,r)\, \|f\|^M_{\R^m,\si}, 
  \end{align*}
  and thus the inclusion $\DM(\R^m,\R^n) \rightarrowtail \SLM(\R^m,\R^n)$.
  
  Finally, let us consider the inclusions $\SpM(\R^m,\R^n)\rightarrowtail W^{[M],q}(\R^m,\R^n)$ and $\SpM(\R^m,\R^n)\rightarrowtail \BM(\R^m,\R^n)$.
  By allowing $q=\infty$, we can handle them simultaneously.
  By \eqref{eq:incl2} and \eqref{eq:incl3}, we have 
  \begin{align*}
    \|f^{(\al)}\|_{L^q} 
    \le  C \|f^{(\al)}\|_{W^{k,p}} 
    =  C \sum_{|\be|\le k} \|f^{(\al+\be)}\|_{L^p}, 
  \end{align*}
  and, thus, using $(|\al|+|\be|)! \, M_{|\al|+|\be|} \le A^{|\be|(\al|+|\be|)}\, |\al|!\, M_{|\al|}$ for some $A\ge 1$ from 
  \thetag{\ref{ssec:ws}.\ref{eq:dc1}}, we obtain
  \begin{align*}
    \sup_{\al \in \N^m} \frac{\|f^{(\al)}\|_{L^q}}{\si^{|\al|}\, |\al|!\, M_{|\al|}}
    &\le  C \sup_{\al \in \N^m} \sum_{|\be|\le k} \frac{\|f^{(\al+\be)}\|_{L^p}}{\si^{|\al|}\, |\al|!\, M_{|\al|}} \\
    &\le  C \sup_{\al \in \N^m} \sum_{|\be|\le k} \frac{\|f^{(\al+\be)}\|_{L^p}}{\si^{|\al|+|\be|}\, (|\al|+|\be|)! \, M_{|\al|+|\be|}}\, 
    A^{|\be|(|\al|+|\be|)} \si^{|\be|} \\
    &\le  C \sup_{\al \in \N^m} \sum_{|\be|\le k} \frac{\|f^{(\al+\be)}\|_{L^p}}{(\si/A^k)^{|\al|+|\be|}\, (|\al|+|\be|)! \, M_{|\al|+|\be|}}\, 
    \si^{|\be|}\\
    &\le  C \sup_{\al \in \N^m} \max_{|\be|\le k} \frac{\|f^{(\al+\be)}\|_{L^p}}{(\si/A^k)^{|\al|+|\be|}\, (|\al|+|\be|)! \, M_{|\al|+|\be|}}\, 
    \sum_{|\be|\le k} 
    \si^{|\be|} \\
    &\le \tilde C  \sup_{\ga \in \N^m} \frac{\|f^{(\ga)}\|_{L^p}}{(\si/A^k)^{|\ga|}\, |\ga|! \, M_{|\ga|}} 
    = \tilde C \|f\|^{M,p}_{\R^m,\si/A^k},
  \end{align*}
  for a constant $\tilde C$ depending only on $m$, $p$, $q$, and $\si$.
\end{proof}

\begin{remark*}
  The fact that 
  $\cD$ is dense in $\Sp$ (but not in $\cB$) and \eqref{eq:incl2} imply that each element of $\Sp$ must tend to $0$ at infinity 
  together with all its iterated partial derivatives. 
\end{remark*}

\section{Composition of test functions}  \label{sec:comptest}

In this section we collect results on composition of test functions that will be needed later on.
Let $M=(M_k)$ and $L=(L_k)$ be any positive sequences, where $L_k\ge 1$ for all $k$.

In general, $\cB$ and $\BM$ are stable under composition, but $\Sp$, $\SpM$, $\cS$, $\SLM$, $\cD$, and $\DM$ are not. 
The following example shows that the ``$0$th derivative'' of the composite $f \o g$ may not have the required decay 
properties at infinity, since $g$ is globally bounded.

\begin{example*}
  Let $f,g \in \cD(\R,\R)$ be such that $f|_{[-1,1]} = 1$ and $|g|\le 1$. 
  The composite $f \o g= 1$ is not in $\bigcup_{1 \le p <\infty} \Sp(\R,\R)$, and hence neither in $\cD(\R,\R)$ nor in $\cS(\R,\R)$.     
\end{example*}

We want to ask that a mapping is of class $\cB$ or $\BM$, but only from the first derivative onwards. 
For $E,F$ Banach spaces, $U \subseteq E$ open, we set 
\begin{align*}
  \cB_2(U,F) &:=  \Big\{f \in C^\infty(U,F) : \|f\|^{(k)}_{U} < \infty \text{ for all } k\in \N_{\ge 1}\Big\}, \\
  \BM_2(U,F) &:= \Big\{f \in C^\infty(U,F) : 
    \Box \rh>0 \sup_{k \in \N_{\ge1}, x\in U} \frac{\|f^{(k)}(x)\|_{\Lin^k(E;F)}}{\rh^k\, k!\,M_k} <\infty \Big\}.
\end{align*}
We choose the subscript $2$ in order to be consistent with Section \ref{sec:HS}, where the subscript $1$ is reserved for a 
different meaning.

\begin{theorem} \label{thm:Bcomp}
  Let $M=(M_k)$ be log-convex.
  Let $E,F,G$ be Banach spaces, and let $U \subseteq E$ and $V \subseteq F$ be open. Then:
  \begin{align*}
    f \in \cB(V,G), ~g \in \cB_2(U,V)    &\Longrightarrow f \o g \in \cB(U,G), \\
    f \in \BM(V,G), ~g \in \BM_2(U,V)    &\Longrightarrow f \o g \in \BM(U,G).
  \end{align*}
\end{theorem}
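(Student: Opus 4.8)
The plan is to prove both statements in parallel via the Faà di Bruno formula for Fréchet derivatives, equation \eqref{eq:FaaF}, and to reduce the $\BM$ case to the combinatorial estimate in the Lemma of Subsection \ref{ssec:Faa}. The key observation making this work is the hypothesis $g \in \cB_2$ (resp.\ $\BM_2$), which controls all derivatives $g^{(\al_i)}$ with $\al_i \ge 1$; since every factor appearing in \eqref{eq:FaaF} for $k \ge 1$ carries a derivative order $\al_i \ge 1$, the zeroth derivative of $g$ (which need not be bounded) never enters, and only $f$ itself (not a derivative of $f$ evaluated at $g(x)$ beyond what is already controlled on all of $V$) is composed with $g$.

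First I would treat the $\cB$ case as a warm-up: for $k \ge 1$, apply \eqref{eq:FaaF} and take operator norms, bounding $\|f^{(j)}(g(x))\|$ by $\|f\|^{(j)}_V < \infty$ and each $\|g^{(\al_i)}(x)\|$ by $\|g\|^{(\al_i)}_U < \infty$; the number of terms is finite for each fixed $k$, so $\|f\o g\|^{(k)}_U < \infty$, giving $f \o g \in \cB(U,G)$ (the $k=0$ bound $\sup_x \|f(g(x))\| \le \|f\|^{(0)}_V$ is immediate).

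For the $\BM$ case, I would fix $\rh_f, \rh_g > 0$ witnessing $f \in \BM(V,G)$ and $g \in \BM_2(U,V)$, so that
\[
  \|f^{(j)}(y)\|_{\Lin^j(F;G)} \le C_f\, \rh_f^{\,j}\, j!\, M_j, \qquad
  \|g^{(\al)}(x)\|_{\Lin^\al(E;F)} \le C_g\, \rh_g^{\,|\al|}\, |\al|!\, M_{|\al|} \quad (|\al|\ge 1),
\]
uniformly in $y \in V$, $x \in U$. Substituting these into \eqref{eq:FaaF}, the factor $\frac{g^{(\al_i)}(x)}{\al_i!}$ contributes $C_g\,\rh_g^{|\al_i|} M_{|\al_i|}$, and after collecting the combinatorial coefficient $\frac{1}{j!}$ together with the multinomial count of ways to group $k$ into $j$ parts $\al_1+\dots+\al_j = k$, the resulting sum has exactly the shape controlled by the Lemma in Subsection \ref{ssec:Faa}, with $A$ proportional to $\rh_g$ and with $|\ga|$ replaced by $k$. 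That Lemma then yields a bound $B\, C^k M_k$ with $C \to 0$ as $\rh_g \to 0$; multiplying by the $\rh_f^{\,j} j! M_j$ factors from $f$ and absorbing the $j!$ and the $M_j$-versus-$M_k$ discrepancy via log-convexity (property \eqref{eq:FdB}, which is precisely $M_1^{\,j} M_k \ge M_j M_{\al_1}\cdots M_{\al_j}$) produces a final estimate of the form $\|(f\o g)^{(k)}(x)\| \le C'\, \rh^{\,k}\, k!\, M_k$ for a suitable new radius $\rh$. In the Roumieu case I may shrink $\rh_g$ (hence the effective $C$) to make the geometric series in $j$ converge; in the Beurling case I must check this for every $\rh_g > 0$, which is exactly where the $C \to 0$ conclusion of the Lemma is needed.

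The main obstacle I anticipate is the bookkeeping that matches \eqref{eq:FaaF} to the multiindex form of the Lemma: the Lemma is stated for the partial-derivative Faà di Bruno formula \eqref{eq:FaaBM} indexed by multiindices $\ga, \de_i, k_i$, whereas \eqref{eq:FaaF} is the symmetrized Fréchet version indexed by integers $j$ and compositions $\al_1+\dots+\al_j=k$. I would either reconcile the two by noting that the symmetrization and the passage from $\R^n$ to a general Banach space $E$ do not increase operator norms (so the scalar combinatorial bound dominates), or re-derive the needed one-variable combinatorial estimate directly from \eqref{eq:Childress} and the power-series argument of \cite[Lem~4.8]{BM04}. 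The quantifier handling for Beurling versus Roumieu — verifying that the chosen $\rh$ depends correctly on the given data in each case, and that moderate growth is not secretly required here (only log-convexity is assumed in the statement) — is the other point demanding care.
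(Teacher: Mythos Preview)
Your overall plan---Fa\`a di Bruno for Fr\'echet derivatives together with the log-convexity inequality \thetag{\ref{ssec:ws}.\ref{eq:FdB}}---is exactly the paper's approach, and the $\cB$ case is handled just as you describe. The paper, however, does \emph{not} invoke the multiindex Lemma of Subsection~\ref{ssec:Faa}; instead it carries out the one-variable estimate directly. After inserting $\|f^{(j)}(g(x))\| \le C_f \rh_f^{\,j} j! M_j$ and $\|g^{(\al_i)}(x)\| \le C_g \rh_g^{\,\al_i} \al_i! M_{\al_i}$ and using \thetag{\ref{ssec:ws}.\ref{eq:FdB}}, one is left with the count $\#\{\al\in\N_{>0}^j:\sum\al_i=k\}=\binom{k-1}{j-1}$, and the finite sum $\sum_{j\ge1}\binom{k-1}{j-1}(M_1\rh_f C_g)^{j-1}=(1+M_1\rh_f C_g)^{k-1}$ gives the closed-form bound
\[
\frac{\|(f\o g)^{(k)}(x)\|_{\Lin^k(E;G)}}{k!\,M_k}\le M_1 C_f C_g \rh_f\,\rh_g^{\,k}\,(1+M_1\rh_f C_g)^{k-1}.
\]
This sidesteps the reconciliation problem you flag between \eqref{eq:FaaF} and \eqref{eq:FaaBM}.

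Your quantifier discussion is backwards and, if followed literally, would fail. In the Roumieu case $\rh_f,\rh_g$ are \emph{given} by the hypotheses---you cannot shrink $\rh_g$---but nothing needs to converge: the sum over $j$ runs only from $1$ to $k$, and the displayed bound immediately yields the witness $\rh=\rh_g(1+M_1\rh_f C_g)$ for $f\o g\in\BrM$. It is in the Beurling case that you must, given a target $\rh>0$, \emph{choose} $\rh_g$ and then $\rh_f$ small (the paper takes $\rh_g=\sqrt\si$, $\rh_f=(C_g M_1)^{-1}\sqrt\si$ with $\rh=\sqrt\si+\si$). Relatedly, if you do want to use the Lemma of Subsection~\ref{ssec:Faa}, the parameter $A$ there corresponds to $\rh_f C_g$ (the contribution per outer derivative of $f$), not to $\rh_g$; the factor $\rh_g^{\,k}$ sits outside and is the analogue of $(n\si)^{|\ga|}$ in later applications of that Lemma.
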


\begin{remark*}
  In particular, $f \o g$ is $\cB$ or $\BM$ if $f$ and $g$ are $\cB$ or $\BM$, respectively.
\end{remark*}

\begin{proof}
Let $\cA \in \{\cB,\BM\}$. 

\paragraph{\bf Case $\cA= \cB$}
By Fa\`a di Bruno's formula for Banach spaces (\ref{ssec:Faa}.\ref{eq:FaaF}), 
we find    
\begin{align} \label{eq:Faa}
\begin{split}
  &\frac{\|(f\o g)^{(k)}(x)\|_{\Lin^k(E;G)}}{k!} \le \\
&\le \sum_{j\ge 1} \sum_{\substack{\al\in \N_{>0}^j\\ \al_1+\dots+\al_j =k}}
\frac{\|f^{(j)}(g(x))\|_{\Lin^j(F;G)}}{j!}\;\prod_{i=1}^j\;
\frac{\|g^{(\al_i)}(x)\|_{\Lin^{\al_i}(E;F)}}{\al_i!}
\end{split}
\end{align}
Taking the supremum over $x \in U$, we deduce 
\begin{align*}
  \|f\o g\|^{(k)}_{U} \le k! \sum_j \sum_\al \frac{\|f\|^{(j)}_{V}}{j!} \prod_i \frac{\|g\|^{(\al_i)}_{U}}{\al_i!} < \infty 
\end{align*}
for each $k \ge 1$. For $k=0$ we have 
\[
  \|f\o g\|^{(0)}_{U} \le \|f\|^{(0)}_{V} < \infty.
\]

\paragraph{\bf Case $\cA = \BM$} 
By \eqref{eq:Faa} and by \thetag{\ref{ssec:ws}.\ref{eq:FdB}}, 
we find    
\begin{align*} 
\begin{split}
   &\frac{\|(f\o g)^{(k)}(x)\|_{\Lin^k(E;G)}}{k!M_k} \le \\
&\le \sum_{j\ge 1} M_1^j \!\!\!\!\sum_{\substack{\al\in \N_{>0}^j\\ \al_1+\dots+\al_j =k}}
\frac{\|f^{(j)}(g(x))\|_{\Lin^j(F;G)}}{j!M_j}\;\prod_{i=1}^j\;
\frac{\|g^{(\al_i)}(x)\|_{\Lin^{\al_i}(E;F)}}{\al_i!M_{\al_i}}
\\ 
&\le  M_1 C_f C_g \rh_f \rh_g^k  \sum_{j\ge 1} \binom{k-1}{j-1} (M_1  \rh_f C_g)^{j-1}  
= M_1 C_f C_g \rh_f \rh_g^k (1 + M_1  \rh_f C_g)^{k-1}.
\end{split} 
\end{align*}
This clearly implies the Roumieu case $\cA=\BrM$.
For the Beurling case $\cA=\BbM$,
given $\rh>0$, let $\si>0$ be such that $\rh = \sqrt \si + \si$ and set $\rh_g := \sqrt \si$ and 
$\rh_f := (C_g M_1)^{-1} \sqrt \si$. Then $\|f \o g\|^M_{U,\rh}< \infty$. 
\end{proof}

\begin{theorem} \label{thm:Wcomp2}
  Let $M=(M_k)$ be log-convex. 
  If $g : \R^n \to \R^n$ is a $C^\infty$-diffeomorphism satisfying $\inf_{x\in \R^n} 
  |\det dg(x)|>0$, then we have the following implications: 
  \begin{align*}
    f \in  \Sp(\R^n,\R^n),~ g \in \cB_2(\R^n,\R^n) &\Longrightarrow  f \o g \in \Sp(\R^n,\R^n), \\
    f \in  \SpM(\R^n,\R^n),~ g \in \BM_2(\R^n,\R^n) &\Longrightarrow  f \o g \in \SpM(\R^n,\R^n). 
  \end{align*}
\end{theorem}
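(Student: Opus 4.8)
The plan is to combine the Fa\`a di Bruno estimate used in Theorem~\ref{thm:Bcomp} with the change of variables formula in the $L^p$-integral, exploiting the hypothesis $\inf_{x}|\det dg(x)|>0$ to control the Jacobian. The key point is that in the Sobolev setting we must estimate $\|(f\o g)^{(\al)}\|_{L^p}$ rather than a sup norm, so the derivatives of $g$ still enter only through their global sup bounds (as $g\in\cB_2$ or $g\in\BM_2$), whereas the derivatives of $f$ enter composed with $g$ and must be integrated against $dx$.

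First I would apply Fa\`a di Bruno's formula \thetag{\ref{ssec:Faa}.\ref{eq:FaaF}} to $(f\o g)^{(k)}(x)$, exactly as in \eqref{eq:Faa}, writing the derivative as a sum of terms of the form $f^{(j)}(g(x))$ composed with products of derivatives $g^{(\al_i)}(x)$. Taking the $L^p$-norm in $x$ and using that each factor $\|g^{(\al_i)}(x)\|/\al_i!$ is bounded uniformly in $x$ (since $g\in\cB_2$, resp.\ $g\in\BM_2$), I would pull these sup-bounds out of the integral, leaving
\[
  \|(f\o g)^{(k)}\|_{L^p} \le k! \sum_{j\ge 1}\sum_{\substack{\al\in\N_{>0}^j\\ \al_1+\dots+\al_j=k}}
  \frac{1}{j!}\Big\|\,\|f^{(j)}\|_{\Lin^j}\o g\,\Big\|_{L^p}\,\prod_{i=1}^j \frac{\|g\|^{(\al_i)}_{\R^n}}{\al_i!}.
\]

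The crucial step is to control $\big\|\,\|f^{(j)}\|_{\Lin^j}\o g\,\big\|_{L^p}$ by $\|f^{(j)}\|_{L^p}$ using the change of variables $y=g(x)$. Since $g$ is a $C^\infty$-diffeomorphism with $\inf_x|\det dg(x)| = : c_0 > 0$, we have
\[
  \int_{\R^n} \|f^{(j)}(g(x))\|_{\Lin^j}^p\,dx
  = \int_{\R^n} \|f^{(j)}(y)\|_{\Lin^j}^p\,\frac{dy}{|\det dg(g^{-1}(y))|}
  \le \frac{1}{c_0}\,\|f^{(j)}\|_{L^p}^p,
\]
so that $\big\|\,\|f^{(j)}\|_{\Lin^j}\o g\,\big\|_{L^p} \le c_0^{-1/p}\,\|f^{(j)}\|_{L^p}$. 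Substituting this bound leaves a combinatorial sum in which $\|f^{(j)}\|_{L^p}$ plays the role that $\|f\|^{(j)}_V$ played in the proof of Theorem~\ref{thm:Bcomp}, and the factors $\prod_i \|g\|^{(\al_i)}_{\R^n}/\al_i!$ are exactly as there. In the classical case $\cA=\Sp$ each such sum is finite for every fixed $k$, giving $\|(f\o g)^{(k)}\|_{L^p}<\infty$; in the $\BM$-weighted case, applying \thetag{\ref{ssec:ws}.\ref{eq:FdB}} as in Theorem~\ref{thm:Bcomp} produces the same geometric-type bound $M_1 C_f C_g \rh_f \rh_g^k (1+M_1\rh_f C_g)^{k-1}$, yielding $\|f\o g\|^{M,p}_{\R^n,\rh}<\infty$ for suitable $\rh$ (Roumieu directly, Beurling by the same rescaling $\rh=\sqrt\si+\si$).

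I expect the main obstacle to be purely technical rather than conceptual: one must be careful that the $0$th-derivative term, which caused the failure of composition-stability in the motivating Example (where $f\o g\equiv 1$), does not appear here because the Sobolev norm $\|f\o g\|^{M,p}$ only involves derivatives $\p^\al$ with $|\al|\ge 1$ once we are inside the $C^\infty$-diffeomorphism setup — more precisely, the estimate above is valid for $k\ge 1$, and for $k=0$ one uses $\|f\o g\|_{L^p}\le c_0^{-1/p}\|f\|_{L^p}<\infty$ directly via the same change of variables, so that all $L^p$-norms of $f\o g$ and its derivatives are controlled. The only genuine input beyond the bookkeeping of Theorem~\ref{thm:Bcomp} is the Jacobian estimate, and this is precisely where the hypothesis $\inf_x|\det dg(x)|>0$ is indispensable: without it the change of variables would not transfer integrability from $f^{(j)}$ to $f^{(j)}\o g$.
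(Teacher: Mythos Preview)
Your proposal is correct and follows essentially the same strategy as the paper: Fa\`a di Bruno plus the change-of-variables estimate $\int_{\R^n}|\p^\al f(g(x))|^p\,dx\le c_0^{-1}\|\p^\al f\|_{L^p}^p$, with the derivatives of $g$ pulled out via $L^\infty$-bounds. The only differences are cosmetic: the paper uses the partial-derivative form \thetag{\ref{ssec:Faa}.\ref{eq:FaaBM}} of Fa\`a di Bruno and concludes the $\SpM$ case via Lemma~\ref{ssec:Faa}, whereas you use the Fr\'echet-derivative form \thetag{\ref{ssec:Faa}.\ref{eq:FaaF}} and the binomial estimate exactly as in Theorem~\ref{thm:Bcomp}; both routes give the same geometric bound and the same Beurling rescaling.
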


\begin{proof}
  By assumption,
  \begin{align} \label{eq:Wcomp2}
    \int_{\R^n} |(\p^\al f)(g(x))|^p\, dx = \int_{\R^n} |(\p^\al f)(y)|^p\, \frac{dy}{|\det dg(g^{-1}(y))|} 
    \le C \int_{\R^n} |(\p^\al f)(y)|^p\, dy, 
  \end{align}
  for a constant $C$ depending only on $g$. 

  Assume that $f \in \Sp$ and $g \in \cB_2$.
  By Fa\`a di Bruno's formula (\ref{ssec:Faa}.\ref{eq:FaaBM}), 
  each partial derivative $\p^\ga(f\o g)$ of $f \o g$ with $|\ga| \ge 1$ is $p$-integrable over $\R^n$,
  using \eqref{eq:Wcomp2} and the fact that $L^p(\R^n)$ is a $L^\infty(\R^n)$-module.

  Now assume that $f \in \SpM$ and $g \in \BM_2$.   
  We use Fa\`a di Bruno's formula (\ref{ssec:Faa}.\ref{eq:FaaBM}) in order to see that 
  \begin{equation} \label{eq:Wcomp3}
    \Box \rh>0 ~\exists D>0 ~\forall |\ga| \ge 1 : \|\p^\ga (f \o g)\|_{L^p(\R^n)} \le D \rh^{|\ga|}\, |\ga|!\, M_{|\ga|}. 
  \end{equation}
  Indeed, using \eqref{eq:Wcomp2} and $|\al|!/\al! \le n^{|\al|}$, we may infer from (\ref{ssec:Faa}.\ref{eq:FaaBM})
  that
  \begin{align*}
    \frac{\|\p^\ga(f\o g)\|_{L^p(\R^n)}}{\ga!} &\le C \sum \frac{\al!}{k_1!\cdots k_\ell!}\, \frac{\|\p^\al f\|_{L^p(\R^n)}}{\al!} \\
    &\hspace{3cm} \times
    \Big(\frac{\|\p^{\de_1}g\|_{L^\infty(\R^n)}}{\de_1!}\Big)^{k_1} 
    \cdots \Big(\frac{\|\p^{\de_\ell}g\|_{L^\infty(\R^n)}}{\de_\ell!}\Big)^{k_\ell}\\
    &\le    
    C \sum \frac{\al!}{k_1!\cdots k_\ell!}\, C_f (n C_g \rh_f)^{|\al|} (n\rh_g)^{|\ga|} 
    \, M_{|\al|} M_{|\de_1|}^{|k_1|} \cdots M_{|\de_\ell|}^{|k_\ell|}
  \end{align*}
  and we may conclude \eqref{eq:Wcomp3}, by Lemma \ref{ssec:Faa}.  
  This implies that $f \o g \in \SpM(\R^n,\R^n)$, since we already know that $f \o g \in L^p(\R^n)$.
\end{proof}

\begin{theorem} \label{thm:Scomp2}
  Let $M=(M_k)$ be log-convex.  
  If $g : \R^n \to \R^n$ is a $C^\infty$-diffeomorphism satisfying $\lim_{|x|\to\oo}g(x)-x=0$, we have the following implications: 
  \begin{align*}
    f \in  \cS(\R^n,\R^n),~ g \in \cB_2(\R^n,\R^n) &\Longrightarrow  f \o g \in \cS(\R^n,\R^n), \\
    f \in  \SLM(\R^n,\R^n),~ g \in \BM_2(\R^n,\R^n) &\Longrightarrow  f \o g \in \SLM(\R^n,\R^n). 
  \end{align*}
\end{theorem}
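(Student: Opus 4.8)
The plan is to treat both implications by the same scheme used for Theorems~\ref{thm:Wcomp2} and \ref{thm:Bcomp}: the estimates on the derivatives $(f\o g)^{(q)}$ with $q\ge 1$ are obtained from Fa\`a di Bruno's formula exactly as before, and the only genuinely new point — the reason for the hypothesis $\lim_{|x|\to\oo}(g(x)-x)=0$ — is the control of the decay weight $(1+\|x\|)^p$, especially for the zeroth-order term $f\o g$ itself, i.e.\ the very term shown to destroy Schwartz decay in the Example preceding Theorem~\ref{thm:Bcomp}.

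First I would record the \emph{decay transfer}. Since $x\mapsto g(x)-x$ is continuous and vanishes at infinity it is bounded, so with $C_0:=\sup_{x\in\R^n}\|g(x)-x\|<\infty$ we have $\|g(x)\|\ge\|x\|-C_0$ and therefore
\[
  1+\|x\| \le (1+C_0)\,(1+\|g(x)\|), \qquad (1+\|x\|)^p \le (1+C_0)^p\,(1+\|g(x)\|)^p .
\]
This is the only place the hypothesis on $g$ enters, and it lets any polynomial weight in $x$ be absorbed into the rapid decay of $f$ evaluated at $g(x)$. In the Schwartz case this already settles $q=0$, since $(1+\|x\|)^p|f(g(x))|\le(1+C_0)^p\|f\|^{(p,0)}_{\R^n}$; for $q\ge 1$ I would expand $(f\o g)^{(q)}$ by (\ref{ssec:Faa}.\ref{eq:FaaF}), move the factor $(1+\|g(x)\|)^p$ onto each $f^{(j)}(g(x))$ to bound it by $\|f\|^{(p,j)}_{\R^n}$, and bound the $g^{(\al_i)}$ of order $\ge1$ using $g\in\cB_2$; as the sum is finite for fixed $q$, this proves $f\o g\in\cS$.

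For the Gelfand--Shilov case the same steps are carried out with the weight sequences tracked. Writing $\|f\|^{L,M}_{\R^n,\si_f}=:C_f<\infty$ and using $g\in\BM_2$ with constant $C_g$ at scale $\rh_g$, I would, for $q\ge 1$, feed the bounds
\[
  (1+\|g(x)\|)^p\|f^{(j)}(g(x))\| \le C_f\,\si_f^{p+j}\,p!\,j!\,L_pM_j, \qquad \|g^{(\al_i)}(x)\| \le C_g\,\rh_g^{\al_i}\,\al_i!\,M_{\al_i}
\]
into (\ref{ssec:Faa}.\ref{eq:FaaF}), collapse the products by $M_jM_{\al_1}\cdots M_{\al_j}\le M_1^jM_q$ from \thetag{\ref{ssec:ws}.\ref{eq:FdB}}, and sum the $\binom{q-1}{j-1}$ compositions, precisely as in the $\BM$ computation of Theorem~\ref{thm:Bcomp} (equivalently, invoke Lemma~\ref{ssec:Faa}). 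Together with the decay transfer and the direct $q=0$ estimate this gives, for a suitable $\si$,
\[
  \frac{(1+\|x\|)^p\,\|(f\o g)^{(q)}(x)\|}{\si^{p+q}\,p!\,q!\,L_pM_q} \le C_f\,\si_f\,C_g\,M_1 ,
\]
so that $f\o g\in\cS^M_{L,\si}$.

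Finally I would handle the quantifier bookkeeping. In the Roumieu case some admissible $\si_f,\rh_g$ are given and the displayed $\si$ exists, so $f\o g\in\SrLM$; in the Beurling case, given any target $\si$ one first fixes $\rh_g$ small (determining $C_g$) and then $\si_f$ small, so that the required $\si$ is reached for every $\si>0$, giving $f\o g\in\SbLM$. I expect the main obstacle to be not the derivatives — these repeat the earlier proofs verbatim — but the $q=0$ term, which carries no derivative of $g$ and is thus invisible to Fa\`a di Bruno; it is exactly this term that forces the hypothesis $g(x)-x\to0$, and supplying it is the entire content beyond Theorems~\ref{thm:Wcomp2} and \ref{thm:Bcomp}.
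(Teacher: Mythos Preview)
Your proposal is correct and follows essentially the same approach as the paper: establish the decay transfer $(1+|x|)\le C(1+|g(x)|)$ from the hypothesis $g(x)-x\to 0$, then feed the rapid-decay bounds on $f^{(j)}(g(x))$ and the $\BM_2$-bounds on $g^{(\al_i)}$ into Fa\`a di Bruno's formula and collapse via log-convexity. The only cosmetic difference is that the paper uses the partial-derivative form \thetag{\ref{ssec:Faa}.\ref{eq:FaaBM}} together with Lemma~\ref{ssec:Faa}, whereas you use the Fr\'echet-derivative form \thetag{\ref{ssec:Faa}.\ref{eq:FaaF}} and the explicit binomial summation as in Theorem~\ref{thm:Bcomp}; your explicit handling of the $q=0$ term is in fact slightly cleaner than the paper's appeal to $f\o g\in\cS$.
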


\begin{proof} 
  By assumption, there is a constant $C>0$ so that 
  \[
    \frac{1+|x|}{1+|g(x)|} \le C
  \]
  for all $x$. Thus, if $f \in \cS$, then 
  \begin{equation} \label{eq:Scomp4}
    (1+ |x|)^p |(\p^\al f)(g(x))| \le C^p (1+ |g(x)|)^p |(\p^\al f)(g(x))|
  \end{equation}
  is bounded in $x$ for all $p$ and $\al$.
  Hence, if $g \in B_2$, Fa\`a di Bruno's formula (\ref{ssec:Faa}.\ref{eq:FaaF}) implies that $f \o g \in S$.

  Now assume that $f \in \SLM$ and $g \in \BM_2$.   
  We use Fa\`a di Bruno's formula (\ref{ssec:Faa}.\ref{eq:FaaBM}) in order to see that 
  \begin{align} \label{eq:Scomp3}
    \Box \rh>0 ~\exists D>0 &~\forall p \in \N ~\forall |\ga| \ge 1 ~\forall x \in \R^n :\\ \nonumber
    &(1+|x|)^p |\p^\ga (f \o g)(x)| \le D \rh^{p+|\ga|}\, p!|\ga|!\,L_p M_{|\ga|}. 
  \end{align}
  Indeed, using \eqref{eq:Scomp4} we may infer from (\ref{ssec:Faa}.\ref{eq:FaaBM}) that 
  \begin{align*}
    &\frac{(1+|x|)^p|\p^\ga(f\o g)(x)|}{p! \ga!} \\
    &\le C^p \sum \frac{\al!}{k_1!\cdots k_\ell!}\, \frac{(1+|g(x)|)^p|(\p^\al f)(g(x))|}{p!\al!} 
    \Big(\frac{|\p^{\de_1}g(x)|}{\de_1!}\Big)^{k_1} 
    \cdots \Big(\frac{|\p^{\de_\ell}g(x)|}{\de_\ell!}\Big)^{k_\ell}\\
    &\le    
    (C\rh_f)^p L_p  \sum \frac{\al!}{k_1!\cdots k_\ell!}\, C_f (n C_g \rh_f)^{|\al|} (n\rh_g)^{|\ga|} 
    \,M_{|\al|} M_{|\de_1|}^{|k_1|} \cdots M_{|\de_\ell|}^{|k_\ell|}
  \end{align*}
  and we may conclude \eqref{eq:Scomp3}, by Lemma \ref{ssec:Faa}. 
  Thus $f \o g \in \SLM(\R^n,\R^n)$, since we already know that $f \o g \in \cS(\R^n,\R^n)$.
\end{proof}

\section{\texorpdfstring{$\CM$}{CM}-plots in spaces of test functions} \label{sec:plots}

In this section we characterize $\CM$-plots in $\BM(\R^n)$, $\SpM(\R^n)$, $\SLM(\R^n)$, and $\DM(\R^n)$.
We assume that $M=(M_k)$ satisfies Hypothesis \ref{ssec:localDC} and that $L=(L_k)$ satisfies $L_k\ge 1$ for all $k$.

Since $\BrM(\R^n)$, $\SprM(\R^n)$, $\SrLM(\R^n)$ are compactly regular (LB)-spaces, by Lemma~\ref{lem:cpreg}, 
and $\DrM(\R^n)$ is even a Silva space, the respective dual spaces 
can be equipped with the Baire topology of a countable limit of Banach spaces. 
Hence the sets of $\CM_b$-plots and of $\CM$-plots in each of these spaces coincides, by Lemma (\ref{ssec:localDC}.\ref{eq:set}).


\begin{lemma} \label{lem:key}
  Let $E$ be a Banach space, and let $U \subseteq E$ be open. 
  For a function $f \in \CM(U \times \R^n)$ consider the following conditions:
  \begin{align}\label{cond:B}
    \tag{$C\cB$} &\forall K \in \sK(U) ~ \Box \rh>0 : 
    \sup_{\substack{k\in \N, \al \in \N^n\\ (u,x) \in K \times \R^n\\ \|v_j\|_E \le 1}} 
    \frac{|\p_u^k\p_x^{\al} f(u,x)(v_1,\dots,v_k)|}{\rh^{k+|\al|}\,  (k+|\al|)!\, M_{k+|\al|}} < \infty.
    \\
    \label{cond:W}\tag{$CW$} &\forall K \in \sK(U) ~ \Box \rh>0 : 
    \sup_{\substack{k \in \N,\al \in \N^n\\ u \in K\\  \|v_j\|_E\le 1}}  
    \frac{\big(\int_{\R^n} |\p_u^k\p_x^{\al} f(u,x)(v_1,\dots,v_k)|^p\, dx\big)^{1/p}}{\rh^{k+|\al|}\, (k+|\al|)!\, M_{k+|\al|}}< \infty.
    \\
    \label{cond:S}\tag{$C\cS$} &\forall K \in \sK(U) ~ \Box \rh>0 : 
    \sup_{\substack{k,m \in \N, \al \in \N^n\\ (u,x) \in K \times \R^n\\ \|v_j\|_E \le 1}} 
    \frac{(1+|x|)^m|\p_u^k\p_x^{\al} f(u,x)(v_1,\dots,v_k)|}
    {\rh^{m+k+|\al|}\,  m! (k+|\al|)! \, L_m M_{k+|\al|}}
    < \infty. 
    \\
    \label{cond:D}\tag{$C\cD$}  &\forall K \in \sK(U) ~\exists L \in \sK(\R^n) ~\forall u \in K :
       \on{supp} f(u,~) \subseteq L.
  \end{align}
  Then
  \begin{align*}
    \Cb^{[M]}(U,\BM(\R^n)) &= C^{[M]}(U, \BM(\R^n)) \\
    &=
    \big\{f^\vee : f\in C^{[M]}(U \times \R^n) \text{ satisfies } \eqref{cond:B}\big\},\\ 
    \Cb^{[M]}(U, \SpM(\R^n)) &=  C^{[M]}(U, \SpM(\R^n)) \\
    &= 
    \big\{f^\vee : f\in C^{[M]}(U \times \R^n) \text{ satisfies } \eqref{cond:W}\big\},\\
    \Cb^{[M]}(U, \SLM(\R^n)) &=  \CM(U, \SLM(\R^n)) \\
    &= \big\{f^\vee : f\in \CM(U \times \R^n) \text{ satisfies } \eqref{cond:S}\big\},\\
    \Cb^{[M]}(U, \DM(\R^n)) &= \CM(U, \DM(\R^n)) \\
    &= \big\{f^\vee : f\in C^{[M]}(U \times \R^n) \text{ satisfies } \eqref{cond:D}\big\}.
  \end{align*}
\end{lemma}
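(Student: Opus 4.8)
The plan is to reduce the statement, case by case, to the $\CM$-exponential law of Subsection~\ref{ssec:localDC} together with the description of each test function space as a countable projective limit (Beurling) or compactly regular inductive limit (Roumieu) of the Banach spaces introduced in Section~\ref{sec:test}. The first equality in each line, $\Cb^{[M]}(U,\cA(\R^n)) = \CM(U,\cA(\R^n))$, is immediate: in the Beurling case it holds for every convenient target by the Lemma of Subsection~\ref{ssec:localDC}, and in the Roumieu case it holds because, by Lemma~\ref{lem:cpreg} and the preceding remark, the duals of $\BrM(\R^n)$, $\SprM(\R^n)$, $\SrLM(\R^n)$, and $\DrM(\R^n)$ carry a Baire topology for which the evaluations are continuous. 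Thus only the second equality in each line requires an argument, and it suffices to characterize the $\CM$-plots.

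The core computation is the following. Write $\cA(\R^n)$ as a limit of Banach steps $\cA_\rho$ (namely $\cB^M_\rho$, $W^{M,p}_\rho$, or $\cS^M_{L,\rho}$). For a map into a single Banach step, $g \in \CM(U,\cA_\rho)$, the Lemma of Subsection~\ref{ssec:localDC} gives $\CM = \Cb^{[M]}$, so being a plot is equivalent to the boundedness, for each $K \in \sK(U)$ and some or all $\rho'>0$, of the normalized derivatives $g^{(k)}(u)(v_1,\dots,v_k)/(\rho'^k k!\,M_k)$ in $\cA_\rho$. Since the evaluations $\ev_x$ are continuous on $\cA_\rho$ and commute with Fr\'echet differentiation in $u$, the element $g^{(k)}(u)(v_1,\dots,v_k)$ of $\cA_\rho$ is exactly the function $x \mapsto \p_u^k f(u,x)(v_1,\dots,v_k)$, where $f\in\CM(U\times\R^n)$ is the function with $f^\vee=g$ furnished by the exponential law. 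Unraveling the norm of $\cA_\rho$ then turns this single bound into a joint estimate on $\p_u^k\p_x^\al f$ in which the $u$-derivatives contribute a factor $k!\,M_k$ and the $x$-derivatives a factor $|\al|!\,M_{|\al|}$ (and, for $\SLM$, the weight contributes $m!\,L_m$). I would pass between this product normalization $k!\,|\al|!\,M_k M_{|\al|}$ and the diagonal normalization $(k+|\al|)!\,M_{k+|\al|}$ appearing in \eqref{cond:B}, \eqref{cond:W}, and \eqref{cond:S} using $\binom{k+|\al|}{k}\le 2^{k+|\al|}$ together with moderate growth \thetag{\ref{ssec:ws}.\ref{eq:mg0}} and submultiplicativity \thetag{\ref{ssec:ws}.\ref{eq:alg}}; the resulting geometric factors $C^{k+|\al|}$ are harmless, being absorbed into $\rho$ (all-$\rho$ in the Beurling case, some-$\rho$ in the Roumieu case). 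The same bounds promote the pointwise-in-$x$ smoothness of $f$ to smoothness of $u\mapsto f(u,\cdot)$ into $\cA_\rho$, so this establishes the equivalence with the stated condition for each fixed Banach step.

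To globalize, in the Beurling case I would use that $g\in\CM(U,\cA^{(M)})$ iff its composite with each projection to $\cA_{1/n}$ is $\CM$, which by the previous paragraph is exactly the all-$\rho$ form of the respective condition; that $f\in\CM(U\times\R^n)$ itself follows from the exponential law applied to the continuous $\CM$-inclusion $\cA(\R^n)\hookrightarrow\CM(\R^n)$. In the Roumieu case I would invoke compact regularity from Lemma~\ref{lem:cpreg}: the bounded set $\{g^{(k)}(u)(v)/(\rho'^k k! M_k)\}$ in the inductive limit already lies and is bounded in a single step $\cA_N$, which produces the some-$\rho$ form of the condition, while conversely a function satisfying the Roumieu condition on a compact $K$ has all its normalized $u$-derivatives bounded in one Banach step and hence defines a $\Cb^{[M]}=\CM$ plot into that step, composed with the inclusion into the inductive limit. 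The $\DM$ case is handled the same way, with the single extra input that a $\CM$-plot into $\DM(\R^n)=\varinjlim_K \cD^{[M]}_K$ factors, on each compact $K\subseteq U$, through a fixed step $\cD^{[M]}_L$; this is precisely the uniform support condition \eqref{cond:D}, and once the support is confined to $L$ the derivative estimates are those of the $\BM$ case restricted to $K\times L$ and hence automatic.

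I expect the main obstacle to be the Roumieu globalization: justifying that the boundedness of the normalized-derivative set in the inductive limit actually takes place in a single Banach step, uniformly enough to yield the some-$\rho$ estimate, is exactly where compact regularity (Lemma~\ref{lem:cpreg}) and the identification $\Cb^{[M]}=\CM$ via the Baire dual are indispensable. The two-parameter bookkeeping relating the product and diagonal normalizations is routine given moderate growth, but must be carried out so that the $u$- and $x$-directions are treated on the same footing.
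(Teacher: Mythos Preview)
Your proposal is correct and follows essentially the same route as the paper's proof: both reduce the first equality to the Baire-dual criterion of Subsection~\ref{ssec:localDC}, then characterize $\CM_b$-plots by unraveling boundedness of $\Sigma^M_{K,\rho}(f^\vee)$ in a single Banach step (using compact regularity from Lemma~\ref{lem:cpreg} in the Roumieu case), and pass between the product normalization $k!\,|\al|!\,M_k M_{|\al|}$ and the diagonal one $(k+|\al|)!\,M_{k+|\al|}$ via moderate growth. Your treatment of the $\DM$ case via factoring through $\cD^{[M]}_L$ is a mild reorganization of the paper's argument, which instead introduces an auxiliary condition \eqref{cond:D'} and verifies boundedness separately in $\cD(\R^n)$ and in $\CM(\R^n)$ using the bornologically equivalent diagonal topology; both arrive at the same conclusion.
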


\begin{proof}
Let $\cA \in \{\BM,\SpM,\SLM,\DM\}$.
If $f^\vee \in \CM(U, \cA(\R^n))$, then $f \in \CM(U \times \R^n)$, 
by the $C^{[M]}$-exponential law (Theorem \ref{ssec:localDC}) and by Proposition \ref{prop:incl}, and 
conversely, if 
$f \in C^{[M]}(U \times \R^n)$, then $f^\vee \in C^{[M]}(U,C^{[M]}(\R^n))$.

\paragraph{\bf Case $\cA \in \{\BM,\SpM,\SLM\}$}
If $f^\vee \in \CM_b(U, \cA(\R^n))$ then 
for each $K \in \sK(U)$ $\Box \rh_1,\rh_2>0$ the set $\set^M_{K,\rh_1}(f^\vee)$ is bounded in 
$\cB^M_{\rh_2}(\R^n)$, $W^{M,p}_{\rh_2}(\R^n)$, or $\cS_{L,\rh_2}^M(\R^n)$, respectively, 
since $\BrM(\R^n)$, $\SprM(\R^n)$, $\SrLM(\R^n)$ are compactly regular (LB)-space, by Lemma~\ref{lem:cpreg}.
That is   
\begin{align} \label{eq:keyB}
  &\sup_{\substack{k \in \N, \al \in \N^n\\ (u,x) \in K \times \R^n\\ \|v_j\|_E \le 1}} 
  \frac{|\p_u^k\p_x^{\al} f(u,x)(v_1,\dots,v_k)|}{\rh_1^{k}\,\rh_2^{|\al|}\,  k!\,|\al|!\, M_{k} M_{|\al|}}
  < \infty,
  \\
  \label{eq:keyW}&\sup_{\substack{k \in \N,\al \in \N^n\\ u \in K\\  \|v_j\|_E\le 1}} 
  \frac{\big(\int_{\R^n} |\p_u^k\p_x^{\al} f(u,x)(v_1,\dots,v_k)|^p\, dx\big)^{1/p}}{\rh_1^{k}\,\rh_2^{|\al|}\,  k!\,|\al|!\, M_{k} M_{|\al|}}
  < \infty, \quad \text{ or }
  \\
  \label{eq:keyS}&\sup_{\substack{k,m \in \N, \al \in \N^n\\ (u,x) \in K \times \R^n\\ \|v_j\|_E \le 1}} 
  \frac{(1+|x|)^m|\p_u^k\p_x^{\al} f(u,x)(v_1,\dots,v_k)|}
  {\rh_1^{k}\,\rh_2^{m+|\al|}\,  k!\,m!\,|\al|!\, L_m M_{k} M_{|\al|}}
  < \infty,
\end{align}
which implies \eqref{cond:B}, \eqref{cond:W}, or \eqref{cond:S}, respectively. 

Conversely, suppose that $f$ satisfies \eqref{cond:B}, \eqref{cond:W}, or \eqref{cond:S}. 
It follows that for each $K \in \sK(U)$  $\Box \rh_1,\rh_2>0$ so that the supremum 
in \eqref{eq:keyB}, \eqref{eq:keyW}, or \eqref{eq:keyS} is finite, 
since $M=(M_k)$ has moderate growth (\ref{ssec:ws}.\ref{eq:mg0}). 

\paragraph{\bf Case $\cA =\DM$}
First we show that \eqref{cond:D} is equivalent to the following condition.
\begin{gather} \label{cond:D'}\tag{$C\cD'$}
  \begin{split}
    \forall K \in \sK(U) ~\exists L \in \sK(\R^n) ~\forall u \in K :
     \on{supp} f(u,~) \subseteq L, \quad \text{ and } \\
     \Box \rh>0 ~\forall \al \in \N^n : \sup_{\substack{k \in \N\\(u,x)\in K \times L}} 
      \frac{\|\p_u^k \p_x^\al f(u,x)\|_{\Lin^{k}(E;\R)}}{\rh^k\,k!\, M_k} < \infty.
  \end{split}
\end{gather}
Let us prove the non-trivial direction \eqref{cond:D} $\Rightarrow$ \eqref{cond:D'}. 
Fix $K \in \sK(U)$.
Then there exists $L \in \sK(\R^n)$ so that $\on{supp} f(u,~) \subseteq L$ for all $u \in K$.  
Since $f \in C^{[M]}(U \times \R^n)$, $\Box \rh >0$ $\exists C>0$ so that, for all $k \in \N$, $\al \in \N^n$, $(u,x) \in K \times L$,
\begin{align*}
  \|\p_u^k \p_x^\al f(u,x)\|_{\Lin^{k}(E;\R)} &\le C \rh^{k+|\al|} (k+|\al|)!\, M_{k+|\al|}
   \le C \rh_1^{k+|\al|} k!\, |\al|!\, M_{k}  M_{|\al|}, 
\end{align*}
for $\Box \rh_1>0$, as $M$ has moderate growth (\ref{ssec:ws}.\ref{eq:mg0}), thus \eqref{cond:D'}.

Now we prove the assertions of the lemma.  
If $f^\vee \in \Cb^{[M]}(U, \DM(\R^n))$, then 
$\forall K \in \sK(U)$ $\Box \rh>0$ the set  
$\set^M_{K,\rh}(f^\vee)$ 
is bounded in $\cD^{[M]}(\R^n)$. There exists $L \in \sK(\R^n)$ 
so that $\set^M_{K,\rh}(f^\vee)$ is bounded in $\cD(L)$. Hence $f$ satisfies \eqref{cond:D}.

Suppose that $f$ satisfies \eqref{cond:D} and thus \eqref{cond:D'}.
We will show that $f^\vee \in C^{[M]}_b(U, \cD(\R^n))$, i.e.,
$\forall K \in \cK(U)$ $\Box \rh>0$ the set $\set^M_{K,\rh}(f^\vee)$ is bounded in $\cD(\R^n)$.
Condition \eqref{cond:D'} guarantees, for given $K \in \cK(U)$,
the existence of $L \in \sK(\R^n)$ and $\Box \rh>0$ so that for all $\al \in \N^n$ the set  
\[
  \Big\{\p_x^\al \Big(\frac{(f^\vee)^{(k)}(u)}{\rh^k\,k!\, M_k}\Big)(x) = \frac{\p_u^k \p_x^\al f(u,x)}{\rh^k\,k!\, M_k} : 
k\in \mathbb N,(u,x)\in K \times L\Big\}
\]
is bounded. That is, $\set^M_{K,\rh}(f^\vee)$ is bounded in $\cD(L)$ and hence also in $\cD(\R^n)$.  
\end{proof}

\section{Groups of diffeomorphisms on \texorpdfstring{$\R^n$}{Rn}} \label{sec:group}

In this section we assume that $M=(M_k)$ satisfies Hypothesis \ref{ssec:localDC} and that $L=(L_k)$ satisfies $L_k\ge 1$ for all $k$.

We define
\begin{align*}
  \DiffB &:= \big\{F=\Id+f: f\in \cB(\R^n,\R^n), 
    \inf_{x\in \R^n} \det(\mathbb I_n + df(x)) >0\big\}\\
    \DiffSp &:= \big\{F=\Id+f: f\in \Sp(\R^n,\R^n), 
    \det(\mathbb I_n + df) >0\big\}\\
    \DiffS&:= \big\{F=\Id+f: f\in \cS(\R^n,\R^n), 
    \det(\mathbb I_n + df) >0\big\}\\
    \DiffD &:= \big\{F=\Id+f: f\in \cD(\R^n,\R^n), 
    \det(\mathbb I_n + df) >0\big\}
\end{align*}
and the ultradifferentiable versions
\begin{empheq}[box=\widefbox]{align*}
  \;\DiffBM &:= \big\{F=\Id+f: f\in \BM(\R^n,\R^n), 
    \inf_{x\in \R^n} \det(\mathbb I_n + df(x)) >0\big\}\\
    \DiffSpM &:= \big\{F=\Id+f: f\in \SpM(\R^n,\R^n), 
    \det(\mathbb I_n + df) >0\big\}\\
    \DiffSLM &:= \big\{F=\Id+f: f\in \SLM(\R^n,\R^n), 
    \det(\mathbb I_n + df) >0\big\}\\
    \DiffDM &:= \big\{F=\Id+f: f\in \DM(\R^n,\R^n), 
    \det(\mathbb I_n + df) >0\big\} 
\end{empheq}
Let $\cA \in \{\cB,\Sp,\cS,\cD,\BM,\SpM,\SLM,\DM\}$.
Then $\DiffA$ is a manifold  modelled on the convenient vector space $\cA(\R^n,\R^n)$ with 
global chart $\{f\in \cA(\R^n,\R^n):\inf_{x\in \R^n} \det(\mathbb I_n + df(x)) >0\big\}\ni f 
\mapsto \Id +f$.

Note that for $\cA\in \{\Sp,\cS,\cD,\SpM,\SLM,\DM\}$ the condition $\det(\mathbb I_n + df) >0$ 
implies $\inf_{x\in \R^n} \det(\mathbb I_n + df(x)) >0$, 
since a function $f \in \Sp(\R^n,\R^n)$ tends to $0$ at $\infty$ together with all its partial derivatives.

\begin{theorem} \label{thm:MichorMumford}
  $\DiffB$, $\DiffSp$, $\DiffS$, and $\DiffD$ are $C^\infty$-regular Lie groups.
\end{theorem}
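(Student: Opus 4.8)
The plan is to establish the four assertions that together constitute "$C^\infty$-regular Lie group'' for each of the groups $\DiffB$, $\DiffSp$, $\DiffS$, $\DiffD$: that each is a group under composition, that multiplication and inversion are smooth ($C^\infty$) in the given global chart, and that each is regular (i.e.\ the product integral / evolution operator for smooth curves in the Lie algebra exists and depends smoothly on the curve). Since the four spaces $\cA \in \{\cB,\Sp,\cS,\cD\}$ behave quite differently at infinity, I would treat them uniformly wherever possible and isolate the space-specific estimates. The overall architecture mirrors that of the ultradifferentiable theorem (Theorem \ref{thm:main}): indeed this is precisely the ``classical'' specialization $M_k=1$ (where $\CM$ becomes $C^\infty$ interpreted via the convenient calculus), so I expect to reuse the composition results of Section \ref{sec:comptest} and the plot characterization of Section \ref{sec:plots} with the $[M]$-decorations stripped away.

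\emph{Group structure.} First I would verify that $\DiffA$ is closed under composition and inversion. Closure under composition reduces to showing that if $F=\Id+f$ and $G=\Id+g$ lie in $\DiffA$ then $F\o G - \Id = f\o G + g \in \cA(\R^n,\R^n)$; the term $g$ is in $\cA$ by hypothesis, and $f\o G = f\o(\Id+g)$ is handled by the composition theorems \ref{thm:Bcomp}, \ref{thm:Wcomp2}, \ref{thm:Scomp2} (with $M_k=1$), noting that $G$ is of the form $\Id+g$ with $g\in\cA\subseteq\cB_2$, so that the derivative estimates on $G$ needed there are available. The determinant condition is preserved by the chain rule $\det d(F\o G)=\det(dF\o G)\cdot\det dG$. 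For inversion I would invoke the classical inverse function theorem to write $F^{-1}=\Id+h$ and then show $h\in\cA$; the cleanest route is to observe $h = -f\o F^{-1}$ and bootstrap using the composition lemmas together with the uniform lower bound $\inf_x\det(\I_n+df(x))>0$ (which for $\cA\ne\cB$ follows automatically from decay at infinity, as remarked after the group definitions) to control $dF^{-1}=(dF\o F^{-1})^{-1}$. This is where Lemma~\ref{lem:LA} (the linear-algebra estimate alluded to in the introduction) enters to bound the inverse matrix and its derivatives.

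\emph{Smoothness of composition and inversion.} To show that $(F,G)\mapsto F\o G$ and $F\mapsto F^{-1}$ are $C^\infty$ maps of the modelling spaces, I would use the convenient-calculus criterion: a map is $C^\infty$ iff it sends $C^\infty$-plots (smooth maps from open subsets of Banach spaces) to $C^\infty$-plots. Via the exponential law and the plot characterization of Lemma~\ref{lem:key} (again with $M_k=1$), testing smoothness of composition reduces to verifying that the associated map $(u,x)\mapsto f(u, G(u,x))+g(u,x)$ lands in $C^\infty(U\times\R^n)$ and satisfies the relevant growth/support condition ($C\cB$, $CW$, $C\cS$, or $C\cD$); this is exactly a parametrized version of the composition estimates already proved. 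Inversion is handled similarly, either by applying the $C^\infty$ implicit/inverse function theorem on Banach-space plots (available on open sets in Banach spaces per Section \ref{sec:DC}) to solve $F(F^{-1}(u,x))=x$ smoothly in $(u,x)$, or by differentiating the relation $h=-f\o(\Id+h)$ and bootstrapping.

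\emph{Regularity.} Finally, given a smooth curve $t\mapsto X(t)$ in the Lie algebra $\cA(\R^n,\R^n)$, I would solve the flow equation $\p_t\ev(t,x)=X(t,\ev(t,x))$, $\ev(0,x)=x$, and show the time-$1$ map (indeed the whole evolution) lies in $\DiffA$ and depends smoothly on $X$. Existence and smoothness of the flow on $U\times\R^n$ follows from the $C^\infty$ ODE solvability on Banach spaces (Section \ref{sec:DC}); the crux is to show the solution stays in the correct chart, i.e.\ $\ev(t,\cdot)-\Id\in\cA$ with the right uniform bounds. I would derive a Gronwall-type estimate for the spatial derivatives of $\ev(t,x)-x$ using Fa\`a di Bruno (\ref{ssec:Faa}.\ref{eq:FaaBM}) to control how $X$'s derivatives propagate, together with the decay or support structure of $\cA$ to keep the $L^p$, Schwartz, or compact-support norms finite uniformly in $t\in[0,1]$. \textbf{The main obstacle} I anticipate is precisely this last regularity step for the non-$\cB$ classes: one must propagate the spatial decay (Schwartz or $L^p$) or the compact support of $X$ through the nonlinear flow and obtain bounds on all derivatives that are uniform in $t$ and depend smoothly on $X$; the compact-support case $\cD$ additionally requires controlling the support of the flow, which one gets from finite propagation speed since $X(t,\cdot)$ has compact support, while the $\Sp$ and $\cS$ cases need the integrated/weighted estimates to close under the Gronwall argument.
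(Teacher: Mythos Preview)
The paper's proof is a one-line citation to \cite{MichorMumford13}, noting only that the general-$p$ case of $\Sp$ works identically to the case $p=2$ treated there. Your proposal instead sketches a direct proof following the same architecture the paper later develops for the $\CM$ groups; this is essentially what \cite{MichorMumford13} does, and your outline is sound.

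Two points to correct. First, setting $M_k=1$ does \emph{not} yield $C^\infty$: by the remarks in Section~\ref{sec:DC}, $C^{\{1\}}=C^\om$ and $C^{(1)}$ consists of restrictions of entire functions. The classical theorem is not a specialization of the $\CM$ theorem but its weight-free analogue. Second, be aware that in this paper the logical dependence runs the other way: the $\CM$ proofs of composition (Theorem~\ref{thm:compDiff}), inversion (Theorem~\ref{thm:invB}), and regularity (Theorem~\ref{thm:reg}) repeatedly invoke Theorem~\ref{thm:MichorMumford} as a black box to secure the baseline $C^\infty$/decay/$L^p$ bounds before upgrading to $[M]$-estimates. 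So you cannot literally ``strip $[M]$'' from Sections~\ref{sec:comp}--\ref{sec:reg}; you must rerun the arguments from scratch in the classical setting. That is simpler---no majorant series from Lemma~\ref{lem:Yam}, no appeal to Lemma~\ref{ssec:Faa}, just Fa\`a di Bruno with finitely many terms and a Gronwall estimate---and it is what \cite{MichorMumford13} carries out. Your reuse of the composition lemmas in Section~\ref{sec:comptest} and the plot characterization Lemma~\ref{lem:key} (in their classical analogues) is legitimate, since those results are independent of Theorem~\ref{thm:MichorMumford}.
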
 

\begin{proof}
  All this was proved in \cite{MichorMumford13} with one exception: for $\DiffSp$ only the 
  case $p=2$ was considered. For $p \ne 2$ the proof is just the same.
\end{proof}

Our goal is to show a corresponding result for the above groups of ultradifferentiable diffeomorphisms, i.e., 
Theorem \ref{thm:main}.
From now on we treat only $\cA\in \{\BM,\SpM,\SLM,\DM\}$.

\begin{lemma} \label{lem:diff} 
  Each element of 
  $\DiffBM$, $\DiffSpM$, $\DiffSLM$, or $\DiffDM$ is a $C^{[M]}$-diffeomorphism of $\R^n$.
\end{lemma}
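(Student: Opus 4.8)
The plan is to show that $F=\Id+f$ is a $C^{[M]}$-diffeomorphism by combining the local $C^{[M]}$ inverse function theorem with a global properness argument. First I would observe that $F\in \CM(\R^n,\R^n)$: indeed $f\in \cA(\R^n,\R^n)\subseteq \BM(\R^n,\R^n)\subseteq \CM(\R^n,\R^n)$ by Proposition \ref{prop:incl}, and $\Id$ is real analytic, hence $\CM$; since $\CM$ is stable under addition, $F\in \CM(\R^n,\R^n)$.

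Next, from $dF(x)=\I_n+df(x)$ together with the defining condition $\inf_{x}\det dF(x)>0$ I would conclude that $dF(x)$ is invertible for every $x\in\R^n$. Hence, by the $C^{[M]}$ inverse function theorem between Banach spaces recalled in Section \ref{sec:DC} (\cite[9.2]{KMRu}), $F$ is a local $C^{[M]}$-diffeomorphism: each point has an open neighborhood on which $F$ restricts to a $C^{[M]}$-diffeomorphism onto an open set.

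It remains to promote this to a global statement, for which the key input is that $F$ is proper. In every one of the classes $\cA\in\{\BM,\SpM,\SLM,\DM\}$ the function $f$ is globally bounded (for $\BM$ this is the $k=0$ term of the seminorm $\|f\|^M_{U,\rh}$; for the others $f$ even tends to $0$ at infinity, as noted after Proposition \ref{prop:incl}). Thus $|F(x)|\ge |x|-\sup_{y}|f(y)|\to\infty$ as $|x|\to\infty$, so preimages of compact sets are bounded and closed, hence compact, and $F$ is proper. A proper local homeomorphism onto the connected, locally compact, locally connected space $\R^n$ is a covering map; since $\R^n$ is simply connected and the domain is connected, this covering has a single sheet, i.e.\ $F$ is a bijection (equivalently, one may invoke the Hadamard global inverse function theorem).

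Finally, since $F$ is bijective, the local $C^{[M]}$ inverses furnished by the inverse function theorem are all restrictions of the single global map $F^{-1}$; thus $F^{-1}$ is $C^{[M]}$ in a neighborhood of each point, whence $F^{-1}\in\CM(\R^n,\R^n)$ and $F$ is a $C^{[M]}$-diffeomorphism. The only genuinely global step is the bijectivity, which the properness estimate reduces to the standard covering-space (or Hadamard) argument; the remaining steps are immediate from the results collected in Section \ref{sec:DC} and Proposition \ref{prop:incl}.
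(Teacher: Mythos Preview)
Your proof is correct and follows essentially the same route as the paper. The only difference is one of packaging: the paper observes that $F\in\DiffB$ by Proposition~\ref{prop:incl} and then invokes Theorem~\ref{thm:MichorMumford} (i.e., \cite[Proof of 3.8]{MichorMumford13}) to conclude directly that $F$ is a global $C^\infty$-diffeomorphism of $\R^n$, after which the $C^{[M]}$ inverse function theorem upgrades the inverse to $\CM$. You instead unpack the global bijectivity step yourself via the properness/covering (Hadamard) argument, which is precisely what underlies the cited result; your version is thus more self-contained, while the paper's is shorter by outsourcing that step.
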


\begin{proof}
  An element $F$ in any of the sets in question is also an element of $\DiffB$, and thus 
  a $C^\infty$-diffeomorphism of $\R^n$, by Theorem \ref{thm:MichorMumford} (see \cite[Proof of 3.8]{MichorMumford13}). 
  The $C^{[M]}$ inverse function theorem implies that $F$ is a 
  $C^{[M]}$-diffeomorphism. 
\end{proof}

%
%

Before we show that $\DiffA$ are groups with respect to composition, let us 
state two lemmas.

\begin{lemma}[{\cite[p.~201]{Yamanaka89}}] \label{lem:Yam}
  Let $M=(M_k)$ be log-convex and let $A,C,\rh>0$. For $N \in \N_{\ge 2}$ define
  \[
  \ps_N(t) := C A \sum_{j=2}^N \frac{\rh^{j-1} M_{j-1}}{j} t^j, \quad t \in \R.
  \]
  Then there exist $c_i$ so that the function $g_N(s) = \sum_{i=1}^\infty c_i s^i$ satisfies 
  \begin{equation} \label{eq:g_N}
    g_N(s)=  A s+ \ps_N(g_N(s)), \quad \text{ for small } s \in \R,
  \end{equation}
  and 
  \begin{align*}
    0 < i c_i &< 
    A (4A(CA+1) \rh)^{i-1} M_{i-1}, \quad \text{ for } i =2,\ldots,N. 
  \end{align*}
\end{lemma}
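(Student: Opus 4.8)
The plan is to first settle existence and positivity of the coefficients, and then to prove the estimate by reducing the weighted problem to a constant-weight one that can be solved in closed form.

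Since $\ps_N$ is a polynomial vanishing to second order at $0$, the map $F(s,g)=g-As-\ps_N(g)$ satisfies $F(0,0)=0$ and $\p_g F(0,0)=1\neq 0$, so the analytic implicit function theorem produces a unique function $g_N(s)=\sum_{i\ge 1}c_is^i$, analytic near $0$, with $g_N(0)=0$ and solving \eqref{eq:g_N}. Comparing coefficients of $s^m$ yields $c_1=A$ and, for $m\ge 2$, a recursion expressing $c_m$ as a positive combination of products $c_{i_1}\cdots c_{i_j}$ with $i_1+\dots+i_j=m$ and $2\le j\le N$; since every part satisfies $i_l<m$, this determines the $c_i$ uniquely and, by induction, shows $c_i>0$. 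In particular $ic_i>0$, which is the claimed lower bound.

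For the upper bound the crucial point is that the factors $1/j$ in $\ps_N$ create genuine cancellation: a naive coefficientwise majorization (or a direct induction that discards negative terms) introduces harmonic/logarithmic factors $\sum_{i\le m}1/i$ that destroy the geometric bound. I will therefore keep the cancellation by passing to an explicit closed form. First I reduce to constant weights: writing $c_i=b_iM_{i-1}$, the inequality $M_{j-1}M_{i_1-1}\cdots M_{i_j-1}\le M_{m-1}$ (iterating \eqref{eq:alg}, valid because the indices sum to $m-1$) shows that $b_m$ obeys the same recursion with every $M$ replaced by $1$, hence $c_i\le \bar b_iM_{i-1}$, where $\bar b=\sum \bar b_is^i$ solves $\bar b=As+CA\sum_{j=2}^N\frac{\rh^{j-1}}{j}\bar b^{\,j}$. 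By the majorant principle for such functional equations and $1/j\le 1/2$ for $j\ge 2$, I then get $\bar b_i\le \tilde b_i$, where $\tilde b$ solves $\tilde b=As+\tfrac{CA\rh}{2}\frac{\tilde b^{\,2}}{1-\rh\tilde b}$.

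This last equation is quadratic, giving $\tilde b(s)=\big((1+A\rh s)-\sqrt{Q(s)}\big)/(\rh(2+CA))$ with $Q(s)=1-2A\rh(CA+1)s+A^2\rh^2s^2$. Setting $v=CA+1$ and $t=A\rh s$ one has $Q=(1-t_+t)(1-t_-t)$ with $t_\pm=v\pm\sqrt{v^2-1}$, so $-[t^i]\sqrt Q\le 2a_it_+^i$, where $a_i=\big|[t^i]\sqrt{1-t}\big|=\frac1i\binom{2i-2}{i-1}2^{-(2i-1)}\le \frac1{2i}$ and $t_+\le 2v$. Translating back, $i\tilde b_i\le A^i\rh^{i-1}(2v)^i/(v+1)$, and for $i\ge 2$ the elementary inequality $2^iv<(v+1)2^{2i-2}$ gives $i\tilde b_i<A(4A(CA+1)\rh)^{i-1}$. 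Combining the two reductions yields $0<ic_i\le i\tilde b_iM_{i-1}<A(4A(CA+1)\rh)^{i-1}M_{i-1}$ for $i=2,\dots,N$. The main obstacle is exactly the one flagged above—locating and preserving the cancellation coming from the $1/j$ weights; once the problem is linearized to the scalar quadratic, the estimate reduces to the classical square-root (Gegenbauer) expansion, and the factor $4$ in $4A(CA+1)\rh$ is precisely what absorbs the binomial growth $\binom{2i-2}{i-1}\le 2^{2i-2}$ together with $t_+\le 2v$.
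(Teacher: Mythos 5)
The paper offers no proof of this lemma --- it is imported verbatim from Yamanaka's paper and used as a black box --- so there is no in-paper argument to compare against. Your proof is a correct, self-contained reconstruction, and it follows essentially the same majorant-series route as the cited source. I checked the individual steps: the recursion $c_m=CA\sum_{j}\tfrac{\rho^{j-1}M_{j-1}}{j}\sum_{i_1+\dots+i_j=m}c_{i_1}\cdots c_{i_j}$ is well-founded and gives positivity; the substitution $c_i=b_iM_{i-1}$ together with the iterated inequality $M_{j-1}M_{i_1-1}\cdots M_{i_j-1}\le M_{m-1}$ is legitimate because the relevant indices sum to $m-1$ and because $M_0=1$ is a standing assumption of the paper; the quadratic for $\tilde b$ has discriminant $Q=1-2vt+t^2$ with $v=CA+1$, $t=A\rho s$, as you state; the factorization $Q=(1-t_+t)(1-t_-t)$ with $t_+t_-=1$, the nonnegativity of the cross terms in $\sqrt{1-t_+t}\,\sqrt{1-t_-t}$, the bound $a_i\le 1/(2i)$ from $\binom{2i-2}{i-1}\le 2^{2i-2}$, and $t_+\le 2v$ all hold; and the closing inequality $2^iv<(v+1)2^{2i-2}$ is valid exactly for $i\ge 2$, which is the range claimed. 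One small wording point: after the substitution $c_i=b_iM_{i-1}$ the coefficients $b_m$ satisfy an inequality version of the constant-weight recursion rather than literally ``the same recursion,'' but the inductive comparison $b_i\le\bar b_i$ you intend is exactly what follows. Your diagnosis of the delicate point is also right: a coefficientwise induction that discards the factors $1/j$ produces harmonic sums, and retaining the closed-form square root is what yields the geometric bound with the precise constant $4A(CA+1)\rho$.
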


\medskip

\begin{lemma} \label{lem:LA}
  Let $A : \R^n \to \R^n$ be a linear invertible mapping. 
  Then $\|A^{-1}\| \le |\det A|^{-1} \|A\|^{n-1}$.
\end{lemma}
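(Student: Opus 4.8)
The plan is to diagonalize the problem using the singular value decomposition of $A$, which converts all three quantities in the inequality into elementary symmetric expressions in the singular values. Write $\si_1\ge \si_2\ge\dots\ge\si_n>0$ for the singular values of $A$ (positive because $A$ is invertible). The three standard facts I would invoke are that the operator norm equals the largest singular value, $\|A\| = \si_1$; that the inverse has operator norm equal to the reciprocal of the smallest, $\|A^{-1}\| = \si_n^{-1}$; and that the absolute value of the determinant is the product, $|\det A| = \si_1\si_2\cdots\si_n$.

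With these substitutions the claimed bound $\|A^{-1}\|\le |\det A|^{-1}\|A\|^{n-1}$ becomes
\[
  \si_n^{-1} \le \frac{\si_1^{\,n-1}}{\si_1\si_2\cdots\si_n}.
\]
Multiplying through by $\si_n$ and clearing, this is equivalent to $\si_1\si_2\cdots\si_{n-1}\le \si_1^{\,n-1}$, which holds at once because each of the $n-1$ factors on the left satisfies $\si_i\le\si_1$. This settles the lemma, and in fact I expect there to be no genuine obstacle: once the problem is phrased in terms of singular values it is a one-line monotonicity estimate. The only point requiring a word of care is the boundary case $n=1$, where the right-hand factor $\|A\|^{n-1}$ is the empty product $1$ and the inequality becomes the identity $\|A^{-1}\|=|\det A|^{-1}$.

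If one prefers to avoid the singular value decomposition and stay closer to the adjugate notation $\adj{A}$, I would instead start from Cramer's rule $A^{-1} = (\det A)^{-1}\adj{A}$, reducing the claim to $\|\adj{A}\|\le\|A\|^{n-1}$. To bound $\|\adj{A}\|$ elementarily, I would use that $\adj{A}^{\top}$ acts on the generalized cross product by $\adj{A}^{\top}(u_1\times\cdots\times u_{n-1}) = Au_1\times\cdots\times Au_{n-1}$; writing an arbitrary unit vector as $u_1\times\cdots\times u_{n-1}$ for an orthonormal frame $u_1,\dots,u_{n-1}$, the norm of the right-hand side is the $(n-1)$-dimensional volume of $Au_1,\dots,Au_{n-1}$, which by Hadamard's inequality is at most $\prod_i\|Au_i\|\le \|A\|^{n-1}$. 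Either route works; I would present the singular value argument as the main proof for its transparency.
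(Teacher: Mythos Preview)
Your proof is correct and essentially the same as the paper's: the paper uses the polar decomposition $A=UP$ with $P=V\on{diag}(s_1,\dots,s_n)V^*$, which amounts to extracting the singular values, and then concludes from $|\det A|=s_1\cdots s_n\le(\min_i s_i)(\max_i s_i)^{n-1}=\|A^{-1}\|^{-1}\|A\|^{n-1}$, exactly your monotonicity estimate. The adjugate alternative you sketch is a nice variant but not needed here.
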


\begin{proof}
  Using the polar decomposition of the matrix $A$ we have $A=UP$ for an orthogonal matrix $U$ and 
  a positive semidefinite Hermitian matrix $P = V \on{diag}(s_1,\ldots,s_n) V^*$. 
  Then $\|A^{-1}\| = \|P^{-1} U^{-1}\| = \|P^{-1}\| = (\min_i s_i)^{-1}$ 
  and $|\det A| = \det P = s_1 s_2 \cdots s_n \le (\min_i s_i) (\max_i s_i)^{n-1} = \|A^{-1}\|^{-1} \|A\|^{n-1}$.
\end{proof}

\begin{proposition} \label{prop:Bgroup}
  $\DiffBM$, $\DiffSpM$, $\DiffSLM$ and $\DiffDM$ are groups with respect to composition.
\end{proposition}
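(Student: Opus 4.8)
The plan is to verify that each of the four sets is closed under composition and inversion, with the identity $\Id\in\DiffA$ being immediate (taking $f=0$). First I would establish closure under composition. Given $F=\Id+f$ and $G=\Id+g$ with $f,g\in\cA(\R^n,\R^n)$, I compute $F\o G = \Id + (g + f\o G)$, so the ``non-identity part'' is $h := g + f\o(\Id+g)$. The task is to show $h\in\cA(\R^n,\R^n)$. Since $\cA$ is a vector space it suffices to show $f\o G\in\cA(\R^n,\R^n)$. The key observation is that $G=\Id+g$ lies in $\DiffB$ (respectively $\DiffBM$) and so $g\in\cB_2$ (respectively $\BM_2$): its derivatives of order $\ge 1$ are globally bounded with the correct $[M]$-growth, which is exactly the hypothesis needed in Theorems \ref{thm:Bcomp}, \ref{thm:Wcomp2}, and \ref{thm:Scomp2}. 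I would then invoke the appropriate composition theorem in each case: Theorem \ref{thm:Bcomp} for $\cA=\BM$; Theorem \ref{thm:Wcomp2} for $\cA=\SpM$ (whose diffeomorphism hypothesis $\inf|\det dG|>0$ holds by definition of the group, using that $g$ decays at infinity); and Theorem \ref{thm:Scomp2} for $\cA=\SLM$ (whose hypothesis $\lim_{|x|\to\infty}G(x)-x=0$ is just $g\to 0$ at infinity). For $\cA=\DM$ one notes $f\o G$ has compact support since $f$ does and $G$ is a homeomorphism, and combines this with the $\BM$ case via $\DM=\BM\cap\cD$. Finally I must confirm the positivity of the Jacobian determinant of $F\o G$, which follows from the chain rule $d(F\o G)(x)=dF(G(x))\,dG(x)$ and multiplicativity of $\det$, together with the fact (noted before the proposition) that for these classes $\det(\I_n+df)>0$ pointwise already forces $\inf_x\det(\I_n+df(x))>0$.

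Next I would handle closure under inversion. Given $F=\Id+f\in\DiffA$, Lemma \ref{lem:diff} tells us $F$ is a genuine $C^{[M]}$-diffeomorphism of $\R^n$, so $F^{-1}$ exists and is $C^{[M]}$; write $F^{-1}=\Id+h$ where $h=F^{-1}-\Id$. The content is to show $h\in\cA(\R^n,\R^n)$ and that $\det(\I_n+dh)>0$ (or the $\inf>0$ version for $\BM$). The determinant condition is automatic from the inverse function theorem, since $d(F^{-1})(y)=\big(dF(F^{-1}(y))\big)^{-1}$ has positive determinant wherever $dF$ does. The real work is the growth estimate showing $h\in\cA$. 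The natural approach is to differentiate the identity $F\o F^{-1}=\Id$, i.e. $h(y)=-f(F^{-1}(y))=-f(y+h(y))$, and bootstrap estimates on the derivatives of $h$ from those of $f$. This is precisely the setting in which Lemma \ref{lem:Yam} (the Yamanaka majorant/implicit-function estimate) and Lemma \ref{lem:LA} (bounding $\|A^{-1}\|$ by $|\det A|^{-1}\|A\|^{n-1}$) are designed to be used: Lemma \ref{lem:LA} controls the inverse of the derivative $dF=\I_n+df$ in terms of its determinant, furnishing the lower bound on the Jacobian that the growth estimate requires, while Lemma \ref{lem:Yam} produces the majorant power series whose coefficients dominate the Fa\`a di Bruno expansion of the derivatives of $h$.

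I expect the inversion step to be the main obstacle, and within it the derivation of the $[M]$-growth bounds on the derivatives of $h=F^{-1}-\Id$. Composition closure reduces almost mechanically to the three composition theorems already proved, but inversion genuinely requires a recursive estimate: one differentiates $f(y+h(y))+h(y)=0$ repeatedly via Fa\`a di Bruno (\ref{ssec:Faa}.\ref{eq:FaaF}), isolates the top-order term $dF(F^{-1}(y))\,h^{(k)}(y)$, inverts $dF$ using Lemma \ref{lem:LA} (which is where the uniform lower bound $\inf\det(\I_n+df)>0$ is essential, and explains why the $\BM$ group is defined with an $\inf$), and bounds the remaining lower-order terms by the majorant $g_N$ of Lemma \ref{lem:Yam}. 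The accounting must be done separately but in parallel for the four function spaces, tracking the $L^p$-norms for $\SpM$, the weighted sup-norms with the polynomial factor $(1+|x|)^m$ for $\SLM$, and the support condition for $\DM$; the moderate-growth and log-convexity hypotheses on $M=(M_k)$ enter to absorb the combinatorial factorials and to split $M_{k+|\al|}$ into $M_k M_{|\al|}$ as in Lemma \ref{lem:key}. A clean way to organize this is to show that $h$ satisfies the plot-type conditions \eqref{cond:B}, \eqref{cond:W}, \eqref{cond:S}, or \eqref{cond:D} of Lemma \ref{lem:key}, thereby reducing membership in each $\cA$ to a uniform estimate that the Yamanaka lemma is built to deliver.
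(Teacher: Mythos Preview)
Your composition argument is correct and matches the paper exactly.

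For inversion in the Roumieu case $\BrM$, your plan via Lemma~\ref{lem:Yam}, Lemma~\ref{lem:LA}, and Fa\`a di Bruno is right; the paper packages it slightly differently by fixing $a$, setting $T=F'(a)^{-1}$ and $\ph=\Id-T\circ F$, so that $G=T+\ph\circ G$ with $\ph'(a)=0$, which lets the majorant $\psi_N$ start at degree~$2$ and the induction against $g_N$ go through cleanly.

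There are two gaps. First, you do not address the Beurling case $\BbM$: the Yamanaka estimate delivers a \emph{single} output radius $4A(CA+1)\rho$, whereas $\BbM$ requires bounds for every $\rho>0$, and taking $\rho\to0$ in the input does not obviously force the output radius to~$0$ because $C=C(\rho)$ may blow up. The paper handles this by a Komatsu-type interpolation: choose a log-convex derivation-closed sequence $N$ with $\sup_x\|f^{(k)}(x)\|/k!\le N_k$ and $N\lhd M$; then $f\in\cB^{\{N\}}$, the Roumieu argument gives $g\in\cB^{\{N\}}\subseteq\cB^{(M)}$. Second, for $\SpM$ and $\SLM$ you propose to re-run the majorant argument while tracking $L^p$-norms or weighted sup-norms; this is awkward since the Yamanaka bound is intrinsically pointwise. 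The paper instead bootstraps: by Proposition~\ref{prop:incl} and the $\BM$ case one already has $g\in\BM$, hence $G=\Id+g\in\BM_2$; since $\ph^{(\al)}=-T\,f^{(\al)}$ for $|\al|\ge2$, the map $\ph$ inherits the $\SpM$ (resp.\ $\SLM$) bounds from $f$, and one simply applies Theorem~\ref{thm:Wcomp2} (resp.\ Theorem~\ref{thm:Scomp2}) to the identity $G=T+\ph\circ G$. Your final suggestion to organise via the plot conditions of Lemma~\ref{lem:key} is off target here --- those conditions concern parameter-dependent families and enter only later, in the proofs of Theorems~\ref{thm:compDiff} and~\ref{thm:invB}.
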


\begin{proof}
  Let $\cA \in \{\BM,\SpM,\SLM,\DM\}$.

  \begin{claim} \label{claim:a} 
    If $F = \Id+f$ and $G= \Id +g$ are elements of $\DiffA$, then so is $F \o G$.
  \end{claim}

  We have  
  \begin{align} \label{eq:group1}
  \begin{split}
    &((\Id+f)\o(\Id+g))(x)=x+g(x)+f(x+g(x)), \quad \text{ and } \\
    &\hspace{0.5cm} \inf_{x \in \R^n} \det ((\I_n + df(x+g(x)))(\I_n +dg(x)))>0.   
  \end{split}
  \end{align}
  We must check that $h(x):= f(x+g(x))$ is
  in $\cA(\R^n,\R^n)$ if $f,g\in\cA(\R^n,\R^n)$.

  This follows from Theorem~\ref{thm:Bcomp} for $\cA = \BM$,  
  from Theorem \ref{thm:Wcomp2} for $\cA = \SpM$, and from Theorem \ref{thm:Scomp2} for $\cA=\SLM$, 
  since $x \mapsto x + g(x)$ is in $\BM_2(\R^n,\R^n)$,
  by Proposition \ref{prop:incl}.
  In the Case $\cA = \DM$,
  $h \in \CM(\R^n,\R^n)$ clearly has compact support.
  The proof of Claim \ref{claim:a} is complete.

  \begin{claim} \label{claim:b}
    If $F = \Id+f\in \DiffA$, then $G = \Id + g := F^{-1} \in \DiffA$.    
  \end{claim}

  By Lemma \ref{lem:diff}, $F^{-1}$ exists as an element of $\CM(\R^n,\R^n)$. 
  The identity
  \begin{equation} \label{eq:group2}
    (\Id+g)\o(\Id+f)=\Id  \quad\Longleftrightarrow\quad f(x)+g(x+f(x))=0
  \end{equation}
  implies, together with Lemma \ref{lem:LA} (and using $\|A\|^{-1} \le \|A^{-1}\|$),
  \[
  \det (\I_n + dg(x+f(x))) = \det (\I_n+df(x))^{-1} \ge \|\I_n + df(x)\|^{-n}
  \]
  which is bounded away from $0$. 
  It remains to show that $g \in \cA(\R^n,\R^n)$.

  To this end we fix $a \in \R^n$ and set $b=F(a)$ and $T=F'(a)^{-1} = G'(b)$. Defining 
  \begin{equation} \label{eq:ph}
    \ph := \Id - T \o F,  
  \end{equation}
  we have 
  \begin{equation} \label{eq:G}
    G = T + \ph \o G.
  \end{equation}

  \paragraph{\bf Case $\cA = \BrM$}
  This proof is inspired by \cite{Yamanaka89}. 
  Since $f \in \BrM(\R^n,\R^n)$ and since $M=(M_k)$ is derivation closed, there exist constants $C,\rh>0$ 
  so that 
  \[
    \|f^{(k)}(x)\|_{\Lin^k(\R^n;\R^n)} \le C \rh^{k-1} (k-1)!\, M_{k-1} \quad \text{ for } x \in \R^n, k \in \N_{\ge 1}
  \]
  and thus 
  \[
    \|\ph^{(k)}(x)\|_{\Lin^k(\R^n;\R^n)} \le C \|T\|_{\Lin(\R^n;\R^n)} \rh^{k-1} (k-1)!\, M_{k-1} \quad \text{ for } x \in \R^n, 
    k \in \N_{\ge 2}.
  \]
  Since $\inf_{x \in \R^n} \det (\I_n + df(x)) >0$ and by Lemma \ref{lem:LA}, 
  \[
    A:=\sup_{x \in \R^n} \|F'(x)^{-1}\|_{\Lin(\R^n;\R^n)} = \sup_{x \in \R^n} \|(\I_n + df(x))^{-1}\|_{\Lin(\R^n;\R^n)} < \infty.
  \]
  Define $\ps_N$ and $g_N$ as in Lemma~\ref{lem:Yam}. Then
  \begin{gather*}
  \|G'(b)\|_{\Lin(\R^n;\R^n)} = \|F'(a)^{-1}\|_{\Lin(\R^n;\R^n)} = \|T\|_{\Lin(\R^n;\R^n)} \le A = g_N'(0) \quad \text{ and } \\
  \|\ph^{(k)}(a)\|_{\Lin^k(\R^n;\R^n)} \le \ps_N^{(k)}(0),  \quad 2 \le k \le N.
  \end{gather*}
  Applying Fa\`a di Bruno's formula (\ref{ssec:Faa}.\ref{eq:FaaF}) to \eqref{eq:G} and to \thetag{\ref{lem:Yam}.\ref{eq:g_N}} we can deduce inductively that $\|G^{(k)}(b)\|_{\Lin^k(\R^n;\R^n)} \le g_N^{(k)}(0)$, for $2 \le k \le N$, in fact, 
  \begin{align} \label{eq:yamanaka}
  \begin{split}
    \|G^{(k)}(b)\|_{\Lin^k(\R^n;\R^n)} 
    &\le  
    \sum_{j\ge 2} \sum_{\substack{\al\in \N_{>0}^j\\ \sum \al_i=k}}
    \frac{k!}{j!\al!} \|\ph^{(j)}(a)\|_{\Lin^j(\R^n;\R^n)} 
    \prod_{i=1}^j \|G^{(\al_i)}(b)\|_{\Lin^{\al_i}(\R^n;\R^n)}
    \\
    &\le  
    \sum_{j\ge 2} \sum_{\substack{\al\in \N_{>0}^j\\ \al_1+\dots+\al_j =k}}
    \frac{k!}{j!\al!} \ps_N^{(j)}(0) 
    \prod_{i=1}^j g_N^{(\al_i)}(0) 
    \\
    &= g_N^{(k)}(0);
  \end{split}
  \end{align}
  note that $\ph'(a)=0$ and $\ps_N'(0)=0$.
  Thus
  \begin{align*}
  \|g^{(k)}(b)\|_{\Lin^k(\R^n;\R^n)} &= 
  \|G^{(k)}(b)\|_{\Lin^k(\R^n;\R^n)} \le g_N^{(k)}(0) = k!\, c_k \\ &< A (4A(CA+1) \rh)^{k-1} (k-1)!\, M_{k-1}
  \\ &\le \frac{1}{4 M_1(CA+1) \rh} (4A(CA+1) \rh)^{k} k!\, M_{k},
  \end{align*}
  As $N$ was arbitrary,  
  we have $g \in \BrM(\R^n,\R^n)$; that $g$ and $g^{(1)}$ are globally bounded follows e.g.\ from Theorem \ref{thm:MichorMumford}. 

  \paragraph{\bf Case $\cA=\BbM$}
  Consider
  \[
  L_k := \frac{1}{k!} \sup_{x \in \R^n} \|f^{(k)}(x)\|_{\Lin^k(\R^n;\R^n)}.
  \]
  Then $L \lhd M$ and since $M_{k+1}/M_k \to \infty$
  there exists a log-convex sequence $N=(N_k)$ satisfying $N_{k+1}/N_k \to \infty$
  and such that $L \le N \lhd M$, 
  by \cite[Lemma~6]{Komatsu79b}; the proof of \cite[Lemma~6]{Komatsu79b} shows that, if $M=(M_k)$ is derivation closed, 
  then we may find a derivation closed $N=(N_k)$ with the above properties.
  Thus, $f \in \cB^{\{N\}}(\R^n,\R^n)$ and, by the Roumieu case, $g \in \cB^{\{N\}}(\R^n,\R^n) \subseteq \BbM(\R^n,\R^n)$.  

  Note that in this step of the proof $N=(N_k)$ need not have moderate growth. 

  \paragraph{\bf Case $\cA=\SpM$}
  Since $f =(f_1,\ldots,f_n) \in \SpM(\R^n,\R^n)$, we have $\Box\rh>0$ $\exists C>0$ 
  so that 
  \[
    \|f_i^{(\al)}\|_{L^p(\R^n)} \le C \rh^{|\al|} |\al|!\, M_{|\al|} \quad \text{ for }  \al \in \N^n, 
    ~i=1,\ldots,n,
  \]
  and thus, as $\ph^{(\al)}(x) = -T(f^{(\al)}(x))$ if $|\al|\ge2$, by \eqref{eq:ph},
  \[
    \|\ph_i^{(\al)}\|_{L^p(\R^n)} \le D \rh^{|\al|} |\al|!\, M_{|\al|} \quad \text{ for } \al \in \N^n, ~|\al|\ge2, 
    ~i=1,\ldots,n,
  \]
  for $D:=C \|T\|_{\Lin(\R^n;\R^n)}$.
  We know from Proposition \ref{prop:incl} and from Case $\cA = \BM$ above that $g \in \BM(\R^n,\R^n)$ and 
  hence $G \in \BM_2(\R^n,\R^n)$. Moreover, $G$ is a diffeomorphism on $\R^n$ satisfying $\inf_x \det d G(x)>0$.
  Thus, applying Theorem \ref{thm:Wcomp2} to \eqref{eq:G}, we may conclude that $g \in \SpM(\R^n,\R^n)$. 
  (Note that we know by Theorem \ref{thm:MichorMumford} that $g$ and all its iterated partial derivatives 
  are $p$-integrable, and so if the growth conditions defining $\SpM$ 
	and so if the growth conditions defining $W^{[M],p}$ hold from some order of derivation onwards,
	     then they also hold for all lower orders of derivation.)
	
  \paragraph{\bf Case $\cA=\SLM$}
  This is analogous to the Case $\cA=\SpM$.
  Here $f \in \SLM(\R^n,\R^n)$ implies $\Box \rh>0$ $\exists C>0$ 
  so that for all $p \in \N$, $\al \in \N^n$, $i=1,\ldots,n$, $x \in \R^n$,
  \[
    (1+|x|)^p|f_i^{(\al)}(x)| \le C \rh^{p+|\al|}\, p!|\al|!\, L_pM_{|\al|},
  \]
  and, as $\ph^{(\al)}(x) = -T(f^{(\al)}(x))$ if $|\al|\ge2$, 
  \[
    (1+|x|)^p|\ph_i^{(\al)}(x)| \le D \rh^{p+|\al|}\, p! |\al|!\, L_p M_{|\al|},
  \]
  for all $p \in \N$, $\al \in \N^n$, $|\al|\ge2$, $i=1,\ldots,n$, $x \in \R^n$, and 
  $D:=C \|T\|_{\Lin(\R^n;\R^n)}$.
  Again Proposition \ref{prop:incl} and Case $\cA = \BM$ imply $g \in \BM(\R^n,\R^n)$ and 
  hence $G \in \BM_2(\R^n,\R^n)$. Moreover, $G= \Id +g$ is a diffeomorphism on $\R^n$ satisfying 
  $G(x) - x \to 0$ as $|x| \to \infty$, since we already know that $g \in \cS$, by Theorem \ref{thm:MichorMumford}.
  Thus, applying Theorem \ref{thm:Scomp2} to \eqref{eq:G}, we may conclude that $g \in \SLM(\R^n,\R^n)$.

  \paragraph{\bf Case $\cA=\DM$}
  By \eqref{eq:group2}, we have 
  $\on{supp} g = \on{supp} f$, and thus $g \in \DM(\R^n,\R^n)$.

  This finishes the proof of Claim \ref{claim:b} and thus of the proposition.
\end{proof}

\section{Composition} \label{sec:comp}

We assume that $M=(M_k)$ satisfies Hypothesis \ref{ssec:localDC} and that $L=(L_k)$ satisfies $L_k\ge 1$ for all $k$.

For simplicity we shall employ the following notational convention in this section.

\begin{convention} \label{convention}
We write $\p^\ga = \p_1^{\ga_0}\p_2^{\ga'}$ for multiindices $\ga=(\ga_0,\ga') = (\ga_0,\ga_1,\ldots,\ga_n) \in \N^{1+n}$, where
$\p_1^{\ga_0}$ are iterated total derivatives with respect to $u$ in a Banach space $E$ and $\p_2^{\ga'}$ are partial derivatives 
with respect to $x \in \R^n$.
\end{convention}

\begin{theorem} \label{thm:compDiff}
  Composition is $\CM$ on $\DiffBM$, $\DiffSpM$, $\DiffSLM$ and $\DiffDM$, i.e.,
  \begin{align*}
    \on{comp} &: \DiffBM \times \DiffBM \to \DiffBM, \\
    \on{comp} &: \DiffSpM \times \DiffSpM \to \DiffSpM, \\
    \on{comp} &: \DiffSLM \times \DiffSLM \to \DiffSLM, \\
    \on{comp} &: \DiffDM \times \DiffDM \to \DiffDM, \\
    &\quad \on{comp}(F,G):= F \o G,
  \end{align*}
  are $\CM$-mappings.
\end{theorem}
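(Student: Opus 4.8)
The plan is to reduce the statement that $\on{comp}$ is $\CM$ to the plot-characterization of Lemma~\ref{lem:key}. By the very definition of $\CM$-mappings between convenient vector spaces (and since the groups are modelled on the convenient spaces $\cA(\R^n,\R^n)$ via the global chart $f\mapsto \Id+f$), it suffices to show that $\on{comp}$ takes $\CM_b$-plots to $\CM$-plots. Concretely, I would fix a $\CM_b$-plot into $\DiffA\times\DiffA$, i.e.\ a $\CM$-map
\[
  (u\mapsto (F_u,G_u)) = (u \mapsto (\Id+f_u,\Id+g_u))
\]
defined on an open ball $U$ in a Banach space $E$, with $f,g\in\CM(U\times\R^n,\R^n)$ satisfying the relevant condition from Lemma~\ref{lem:key} (one of \eqref{cond:B}, \eqref{cond:W}, \eqref{cond:S}, \eqref{cond:D}). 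The composite plot is $u\mapsto F_u\o G_u = \Id + h_u$ where
\[
  h(u,x) = g(u,x) + f\bigl(u,\, x+g(u,x)\bigr),
\]
and I must verify that $h\in\CM(U\times\R^n,\R^n)$ satisfies the \emph{same} condition. Smoothness and the local $\CM$-property of $h$ follow from stability of the local classes $\CM$ under composition; the whole content is the global/integrability estimate.

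\textbf{The estimates.} The term $g(u,x)$ trivially inherits the required bound, so everything reduces to $(u,x)\mapsto f(u,x+g(u,x))$. I would treat this as a composition $f\o\Phi$ where $\Phi(u,x)=(u,x+g(u,x))$, and apply the mixed Fa\`a di Bruno formula of Subsection~\ref{ssec:Faa} in the variables $(u,x)$ (using Convention~\ref{convention}), exactly as in the proofs of Theorems~\ref{thm:Bcomp}, \ref{thm:Wcomp2}, and \ref{thm:Scomp2}, but now carrying the parameter $u$ through. For $\cA=\BM$ the bound on $\p_u^k\p_x^\al h$ comes straight from condition \eqref{cond:B} on both $f$ and $g$, together with \thetag{\ref{ssec:ws}.\ref{eq:FdB}} and the Fa\`a di Bruno Lemma of Subsection~\ref{ssec:Faa}; the point is that $(u,x)\mapsto (u,x+g(u,x))$ satisfies the $\cB^{[M]}$-type condition ``from the first derivative onwards'' in the combined multiindex. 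For $\cA=\SpM$ I use the change-of-variables estimate \eqref{eq:Wcomp2} (uniformly in $u$, since $\inf_x\det dG_u(x)>0$ is controlled on compact $u$-sets by the $\BM$-bounds already in hand), which transports the $L^p$-norm in $x$ under $y=x+g(u,x)$; for $\cA=\SLM$ I use the weight comparison $\tfrac{1+|x|}{1+|x+g(u,x)|}\le C$ as in \eqref{eq:Scomp4}, again uniform on compact $u$-sets because $g$ and its derivatives decay. For $\cA=\DM$ the support statement is immediate: $\on{supp} h(u,\cdot)$ stays inside a compact set depending only on a compact $u$-set, giving \eqref{cond:D}.

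\textbf{The main obstacle} will be bookkeeping the parameter $u\in U$ uniformly: all the constants appearing in the Fa\`a di Bruno estimates ($C_f,C_g,\rh_f,\rh_g$, the change-of-variables constant $C$ in \eqref{eq:Wcomp2}, the weight-ratio constant in \eqref{eq:Scomp4}, and the lower bound on $\det dG_u$) must be chosen \emph{locally uniformly} in $u$ on each $K\in\sK(U)$, and the total order of differentiation $k+|\al|$ in the combined multiindex must be handled coherently so that moderate growth \thetag{\ref{ssec:ws}.\ref{eq:mg0}} lets me split $M_{k+|\al|}$ as needed when passing between the single-sequence bounds of Lemma~\ref{lem:key} and the product form arising in Fa\`a di Bruno. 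I expect this to be purely technical given the single-variable composition theorems already proved: the parametrized versions are obtained by the same computations with $u$ as a passive spectator, so the proof will consist of invoking Theorems~\ref{thm:Bcomp}, \ref{thm:Wcomp2}, \ref{thm:Scomp2} in the variables $(u,x)$ and reading off the conditions of Lemma~\ref{lem:key}.
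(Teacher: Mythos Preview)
Your proposal is correct and follows essentially the same route as the paper: reduce via Lemma~\ref{lem:key} to showing that $h(u,x)=f(u,x+g(u,x))$ inherits the relevant plot condition, write this as $f\o\ph$ with $\ph=(\ph_1,\ph_2)=(u,x+g(u,x))$, and run the Fa\`a di Bruno estimate in the combined variables $(u,x)$ exactly as in Theorems~\ref{thm:Bcomp}, \ref{thm:Wcomp2}, \ref{thm:Scomp2}, with all constants taken uniform on compact $K\subset U$. The paper carries out these computations explicitly rather than formally invoking the earlier theorems, but the mechanism and the required uniformity observations (Jacobian lower bound, weight ratio, support control) are precisely the ones you identify.
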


\begin{proof}
It suffices to prove that $\on{comp}$ maps 
$\CMb$-plots to $\CM$-plots; see Subsection \ref{ssec:localDC}. 
By Lemma~\ref{lem:key}, it is enough 
to consider $U \ni u \mapsto \Id+f(u,~)$ and $U \ni u \mapsto \Id+g(u,~)$, where 
$U$ is open in some Banach space $E$ and $f,g \in \CM(U\times \R^n,\R^n)$ satisfy \eqref{cond:B}, \eqref{cond:W}, \eqref{cond:S}
or \eqref{cond:D}, 
and to check that $h(u,x) := f(u,x+g(u,x))$ (cf.\ (\ref{prop:Bgroup}.\ref{eq:group1}))
satisfies \eqref{cond:B}, \eqref{cond:W}, \eqref{cond:S}
or \eqref{cond:D}, respectively. 
That $h \in \CM(U\times \R^n,\R^n)$ follows from the fact that $\CM$ is stable under composition. 

Let us define $\ph=(\ph_1,\ph_2)$ by setting
\begin{align} \label{eq:compph}
  \begin{split}
    \ph_1(u,x) := u \quad &\text{ and } \quad \ph_2(u,x) := x + g(u,x) \quad \text{ such that } \\
    &h = f \o \ph.  
  \end{split}
\end{align}
Since $g=(g_1,\ldots,g_n)$ satisfies \eqref{cond:B}, \eqref{cond:W}, \eqref{cond:S}
or \eqref{cond:D}, we may conclude that $\forall K \in \sK(U)$
$\Box \si>0$ $\exists D>0$ such that, for all $u \in K$, and all $x \in \R^n$, 
\begin{align} \label{eq:Bestph}
\begin{split}
  \|\p^{\de} \ph_1(u,x)\|_{\Lin^{\de_0}(E;E)} &\le D \si^{|\de|}\, |\de|!\, M_{|\de|} \quad  
  \forall \de \in \N^{1+n}, 
  \\
  \|\p^{\de} \ph_2(u,x)\|_{\Lin^{\de_0}(E;\R^n)} &\le D \si^{|\de|}\, |\de|!\, M_{|\de|}   
  \\
  &\hspace{.5cm}
  \forall \de_0\in \N,\de' \in \N^n \setminus \{0\} 
   \text{ and } \forall \de_0\in \N_{\ge1},\de' \in \N^n,
\end{split}
\end{align}
where we apply Convention \ref{convention}.
Here we use (the proof of) Proposition \ref{prop:incl}; for instance, in the case $\cA= \SpM$, 
\[
  \sup_{\substack{\de \in \N^{1+n}\\x \in \R^n}} \frac{|\p^{\de} g_i(u,x)(v_1,\ldots,v_{\de_0})|}
  {\si^{|\de|}\, |\de|!\, M_{|\de|}}
  \le \tilde D \sup_{\substack{\de \in \N^{1+n}}} \frac{\|\p^{\de} g_i(u,~)(v_1,\ldots,v_{\de_0})\|_{L^p(\R^n)}}
  {\tilde \si^{|\de|}\, |\de|!\, M_{|\de|}}.
\]

\paragraph{\bf Case $\cA=\BM$}
We already know from Theorem \ref{thm:MichorMumford} that for all $K \in \sK(U)$, $\al \in \N^{1+n}$, 
\[
  \sup_{(u,x) \in K \times \R^n} \|\p^\al h(u,x)\|_{\Lin^{\al_0}(E;\R^n)} < \infty.
\]
Let $K \in \sK(U)$ be fixed. Then, since $f=(f_1,\ldots,f_n)$ satisfies \eqref{cond:B},  
we have 
$\Box \rh>0$ $\exists C>0$ such that, for all $i=1,\ldots,n$, all $u \in K$, and all $x \in \R^n$, 
\begin{align} \label{eq:fcompB}
  \|\p^{\al} f_i(u,x)\|_{\Lin^{\al_0}(E;\R)}  &\le C \rh^{|\al|}\, |\al|!\, M_{|\al|} \quad \forall \al \in \N^{1+n}.
\end{align}
Fa\`a di Bruno's formula (\ref{ssec:Faa}.\ref{eq:FaaBM}), \eqref{eq:Bestph}, and \eqref{eq:fcompB} then give, 
for all $i=1,\ldots,n$, $u \in K$, $x \in \R^n$, and 
$\ga \in \N^{1+n} \setminus \{0\}$, 
\begin{align*}
  &\frac{\|\p^{\ga} h_i(u,x)\|_{\Lin^{\ga_0}(E;\R)}}{\ga!}  \\
  &\le  \sum \frac{\al!}{k_1!\cdots k_\ell!}  
  \frac{\|\p^\al f_i(u,x+g(u,x))\|_{\Lin^{\al_0}(E;\R)}}{\al!} \\ 
  &\hspace{1cm} \times \Big( \frac{\|\p^{\de_{1}} \ph_1(u,x)\|_{\Lin^{\de_{10}}(E;E)}}{\de_{1}!}\Big)^{k_{11}} 
  \cdots \Big( \frac{\|\p^{\de_{\ell}} \ph_1(u,x)\|_{\Lin^{\de_{\ell0}}(E;E)}}{\de_{\ell}!}\Big)^{k_{\ell1}} \\
  &\hspace{1cm} \times \Big( \frac{\|\p^{\de_{1}} \ph_2(u,x)\|_{\Lin^{\de_{10}}(E;\R^n)}}{\de_{1}!}\Big)^{k_{12}} 
  \cdots \Big( \frac{\|\p^{\de_{\ell}} \ph_2(u,x)\|_{\Lin^{\de_{\ell0}}(E;\R^n)}}{\de_{\ell}!}\Big)^{k_{\ell2}}
  \\
  &\le    
    \sum \frac{\al!}{k_1!\cdots k_\ell!}\, C ((n+1) D \rh)^{|\al|} ((n+1)\si)^{|\ga|} 
    \, M_{|\al|} M_{|\de_1|}^{|k_1|} \cdots M_{|\de_\ell|}^{|k_\ell|},
\end{align*}
where $\al = k_1+\cdots +k_\ell$ and the sum is taken over all sets $\{\de_1,\ldots,\de_\ell\}$ of $\ell$ distinct 
elements in $\N^{1+n}\setminus \{0\}$ and all ordered $\ell$-tuples $(k_1,\ldots,k_\ell) \in (\N^{1+n}\setminus \{0\})^\ell$, 
$\ell = 1,2, \ldots$, such that $\ga = |k_1| \de_1 +\cdots + |k_\ell| \de_\ell$.
By Lemma \ref{ssec:Faa}, we conclude that $\Box \ta>0$ $\exists B>0$ such that
\[
  \frac{\|\p^{\ga}h(u,x)\|_{L^{\ga_0}(E;\R^n)}}{\ga!} \le B \ta^{|\ga|}\, M_{|\ga|}
  \quad \forall (u,x) \in K \times \R^n, \ga \in \N^{1+n} \setminus \{0\}.
\]
Thus $h$ satisfies \eqref{cond:B}.

\paragraph{\bf Case $\cA=\SpM$}
We already know from Theorem \ref{thm:MichorMumford} that for all $K \in \sK(U)$, $i=1,\ldots,n$, $\al \in \N^{1+n}$, 
\[
  \sup_{u \in K,\|v_j\|_E\le1} \int_{\R^n} |\p^\al h_i(u,x)(v_1,\ldots,v_{\al_0})|^p \, dx< \infty.
\]
Let $K \in \sK(U)$ be fixed. Then, since $f=(f_1,\ldots,f_n)$ satisfies \eqref{cond:W}, 
we have 
$\Box \rh>0$ $\exists C>0$ such that, for all $i=1,\ldots,n$, all $u \in K$, and all $v_j \in E$ with $\|v_j\|_E \le 1$,
\begin{align} \label{eq:fcompW}
  \|\p^{\al} f_i(u,~)(v_1,\ldots,v_{\al_0})\|_{L^p(\R^n)}  &\le C \rh^{|\al|}\, |\al|!\, M_{|\al|} \quad \forall  \al \in \N^{1+n}.
\end{align}
Since $x \mapsto \ph_2(u,x) = x+g(u,x)$ is a diffeomorphism on $\R^n$ satisfying $\inf_x \det d_x \ph_2(u,x)>0$, we have,  
for all $i$, $u$, and $v_j$, 
  \begin{align*} 
    &\int_{\R^n} |(\p^\al f_i)(u,\ph_2(u,x))(v_1,\ldots,v_{\al_0})|^p\, dx \\
    &= \int_{\R^n} |(\p^\al f_i)(u,y)(v_1,\ldots,v_{\al_0})|^p\, \frac{dy}{|\det d_x\ph_2(u,x)|} \\
    &\le C(g) \int_{\R^n} |(\p^\al f_i)(u,y)(v_1,\ldots,v_{\al_0})|^p\, dy; 
  \end{align*}
cf.\ \thetag{\ref{thm:Wcomp2}.\ref{eq:Wcomp2}}.
Fa\`a di Bruno's formula (\ref{ssec:Faa}.\ref{eq:FaaBM}), \eqref{eq:Bestph}, and \eqref{eq:fcompW} then give, 
for all $i=1,\ldots,n$, $u \in K$, $v_j \in E$ with $\|v_j\|_E \le 1$, and 
$\ga \in \N^{1+n} \setminus \{0\}$, 
\begin{align*}
  &\frac{\|\p^{\ga} h_i(u,~)(v_1,\ldots,v_{\ga_0})\|_{L^p(\R^n)}}{\ga!}  \\
  &\le C(g,p) \sum \frac{\al!}{k_1!\cdots k_\ell!} \sup_{\|v_j\|_E \le 1} 
  \frac{\|\p^\al f_i(u,~)(v_1,\ldots,v_{\al_0})\|_{L^p(\R^n)}}{\al!} \\ 
  &\hspace{1cm} \times \Big(\sup_{u,x} \frac{\|\p^{\de_{1}} \ph_1(u,x)\|_{\Lin^{\de_{10}}(E;E)}}{\de_{1}!}\Big)^{k_{11}} 
  \cdots \Big(\sup_{u,x} \frac{\|\p^{\de_{\ell}} \ph_1(u,x)\|_{\Lin^{\de_{\ell0}}(E;E)}}{\de_{\ell}!}\Big)^{k_{\ell1}} \\
  &\hspace{1cm} \times \Big(\sup_{u,x} \frac{\|\p^{\de_{1}} \ph_2(u,x)\|_{\Lin^{\de_{10}}(E;\R^n)}}{\de_{1}!}\Big)^{k_{12}} 
  \cdots \Big(\sup_{u,x} \frac{\|\p^{\de_{\ell}} \ph_2(u,x)\|_{\Lin^{\de_{\ell0}}(E;\R^n)}}{\de_{\ell}!}\Big)^{k_{\ell2}}
  \\
  &\le    
    C(g,p) \sum \frac{\al!}{k_1!\cdots k_\ell!}\, C ((n+1) D \rh)^{|\al|} ((n+1)\si)^{|\ga|} 
    \, M_{|\al|} M_{|\de_1|}^{|k_1|} \cdots M_{|\de_\ell|}^{|k_\ell|}
\end{align*}
where the sum is as in Case $\cA=\BM$.
By Lemma \ref{ssec:Faa}, 
we conclude that $\Box \ta>0$ $\exists B>0$ such that
\[
  \frac{\|\p^\ga h_i(u,~)(v_1,\ldots,v_{\ga_0})\|_{L^p(\R^n)}}{\ga!} \le B \ta^{|\ga|}\, M_{|\ga|},
\]
for all $i=1,\ldots,n$, $u \in K$, $v_j \in E$ with $\|v_j\|_E \le 1$, and 
$\ga \in \N^{1+n} \setminus \{0\}$.
Thus $h$ satisfies \eqref{cond:W}.

\paragraph{\bf Case $\cA=\SLM$}
We already know from Theorem \ref{thm:MichorMumford} that for all $K \in \sK(U)$, $p \in \N$, $\al \in \N^{1+n}$, 
\[
  \sup_{(u,x) \in K \times \R^n} (1+|x|)^p \|\p^\al h(u,x)\|_{\Lin^{\al_0}(E;\R^n)} < \infty.
\]
Let $K \in \sK(U)$ be fixed. Then, since $f=(f_1,\ldots,f_n)$ satisfies \eqref{cond:S}, 
we have $\Box \rh>0$ $\exists C>0$ such that, for all $i=1,\ldots,n$, $p \in \N$, $\al \in \N^{1+n}$, $u \in K$, and $x \in \R^n$,
\begin{align} \label{eq:fcompS}
  (1+|x|)^p\|\p^{\al} f_i(u,x)\|_{\Lin^{\al_0}(E;\R)} \le C \rh^{p+|\al|}\, p!|\al|!\, L_p M_{|\al|}.
\end{align}
That $g$ satisfies \eqref{cond:S} implies that there exists $C(g)>0$ so that 
\[
   \frac{1+|x|}{1+|\ph_2(u,x)|} = \frac{1+|x|}{1+|x+g(u,x)|} \le C(g) 
\] 
for all $(u,x) \in K \times \R^n$.
Fa\`a di Bruno's formula (\ref{ssec:Faa}.\ref{eq:FaaBM}), \eqref{eq:Bestph}, and \eqref{eq:fcompS} then give,
for $(u,x) \in K \times \R^n$, $p \in \N$, and $\ga \in \N^{1+n} \setminus \{0\}$, 
\begin{align*}
  &\frac{(1+|x|)^p\|\p^{\ga} h_i(u,x)\|_{\Lin^{\ga_0}(E;\R)}}{p!\ga!}  \\
  &\le C(g)^p\sum \frac{\al!}{k_1!\cdots k_\ell!} \frac{(1+|\ph_2(u,x)|)^p\|\p^\al f(u,\ph_2(u,x))\|_{\Lin^{\al_0}(E;\R)}}{p!\al!} \\ 
  &\hspace{1cm} \times \Big( \frac{\|\p^{\de_{1}} \ph_1(u,x)\|_{\Lin^{\de_{10}}(E;E)}}{\de_{1}!}\Big)^{k_{11}} 
  \cdots \Big( \frac{\|\p^{\de_{\ell}} \ph_1(u,x)\|_{\Lin^{\de_{\ell0}}(E;E)}}{\de_{\ell}!}\Big)^{k_{\ell1}} \\
  &\hspace{1cm} \times \Big( \frac{\|\p^{\de_{1}} \ph_2(u,x)\|_{\Lin^{\de_{10}}(E;\R^n)}}{\de_{1}!}\Big)^{k_{12}} 
  \cdots \Big( \frac{\|\p^{\de_{\ell}} \ph_2(u,x)\|_{\Lin^{\de_{\ell0}}(E;\R^n)}}{\de_{\ell}!}\Big)^{k_{\ell2}}
  \\
  &\le    (C(g) \rh)^p L_p
    \sum \frac{\al!}{k_1!\cdots k_\ell!}\, C ((n+1) D \rh)^{|\al|} ((n+1)\si)^{|\ga|} 
    \, M_{|\al|} M_{|\de_1|}^{|k_1|} \cdots M_{|\de_\ell|}^{|k_\ell|}
\end{align*}
where the sum is as above.
By Lemma \ref{ssec:Faa}, 
we conclude that $\Box \ta>0$ $\exists B$ such that
\[
  \frac{(1+|x|)^p\|\p^{\ga}h_i(u,x)\|_{\Lin^{\ga_0}(E;\R^n)}}{p!\ga!} \le B \ta^{p+|\ga|}\, L_p M_{|\ga|},
\]
for all $i=1,\ldots,n$, $(u,x) \in K \times \R^n$, $p \in \N$, and $\ga \in \N^{1+n} \setminus \{0\}$.
Thus $h$ satisfies \eqref{cond:S}.

\paragraph{\bf Case $\cA=\DM$} 
Since $f$ and $g$ satisfy \eqref{cond:D}, there are $K_f, K_g \in \sK(\R^n)$ so that $\on{supp} f(u,~) \subseteq K_f$ 
and $\on{supp} g(u,~) \subseteq K_g$ for all $u \in U$.
Then 
\[
  \on{supp} h(u,~) \subseteq \overline{\bigcup_{u \in U} (\Id + g(u,~))^{-1}(K_f)} =:  K_h,
\]
where $K_h \subseteq \R^n$ is compact, since $\bigcup_{u \in U} (\Id + g(u,~))^{-1}(K_f)$ is bounded.
Indeed, suppose that there exist sequences $u_k \in U$ and $x_k \in \R^n$ so that $x_k\to \infty$ and 
$x_k + g(u_k,x_k) \in K_f$. Then there is a subsequence $x_{k_n} \not\in K_g$ and thus 
$x_{k_n} \in K_f$, contradicting unboundedness. So $h$ satisfies condition \eqref{cond:D}.
\end{proof}

\section{Inversion} \label{sec:inv}

We assume that $M=(M_k)$ satisfies Hypothesis \ref{ssec:localDC} and that $L=(L_k)$ satisfies $L_k\ge 1$ for all $k$. 

\begin{theorem} \label{thm:invB}
  Inversion is $\CM$ on $\DiffBM$, $\DiffSpM$, $\DiffSLM$ and $\DiffDM$, i.e.,
  \begin{align*}
    \on{inv} &: \DiffBM \to \DiffBM, \\
    \on{inv} &: \DiffSpM \to \DiffSpM, \\
    \on{inv} &: \DiffSLM \to \DiffSLM, \\
    \on{inv} &: \DiffDM \to \DiffDM, \\
    &\hspace{.3cm} \on{inv}(F) := F^{-1},
  \end{align*}
  are $\CM$-mappings.
\end{theorem}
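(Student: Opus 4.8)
The plan is to follow the proof of Theorem~\ref{thm:compDiff}. It suffices to show that $\on{inv}$ maps $\CM_b$-plots to $\CM$-plots (see Subsection~\ref{ssec:localDC}), so by Lemma~\ref{lem:key} we may fix an open set $U$ in a Banach space $E$ and $f \in \CM(U \times \R^n,\R^n)$ satisfying the relevant one of \eqref{cond:B}, \eqref{cond:W}, \eqref{cond:S}, \eqref{cond:D} (so that $u \mapsto \Id + f(u,\cdot)$ is a plot into $\DiffA$), and we must produce $g \in \CM(U \times \R^n,\R^n)$ with $\Id + g(u,\cdot) = (\Id + f(u,\cdot))^{-1}$ satisfying the same condition. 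The unifying device is to pass to the fibered map $\tilde F : U \times \R^n \to U \times \R^n$, $\tilde F(u,x) = (u, x + f(u,x))$, with inverse $\tilde G(u,y) = (u, y + g(u,y))$. To see that $g$ is $\CM$, note that $d\tilde F(u,x)$ is block lower triangular with diagonal blocks $\I_E$ and $\I_n + \p_x f(u,x) = dF_u(x)$, hence everywhere invertible since each $F_u := \Id + f(u,\cdot)$ is a diffeomorphism with nonvanishing Jacobian; as $E \times \R^n$ is a Banach space, the $\CM$ inverse function theorem (\cite[9.2]{KMRu}) makes $\tilde F$ a local $\CM$-diffeomorphism, and being a global bijection its inverse $\tilde G$ is $\CM$. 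Thus $g$, the $\R^n$-component of $\tilde G(u,y) - (u,y)$, lies in $\CM(U \times \R^n,\R^n)$. For $\cA = \DM$ the identity \eqref{eq:group2} gives $\on{supp} g(u,\cdot) = \on{supp} f(u,\cdot)$, so the uniform compact support of $f$ passes to $g$ and, with $g \in \CM$, places $g$ in $\DM$; this already finishes that case.

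The heart is condition \eqref{cond:B} for $g$ (it is the required condition for $\cA=\BM$ and the stepping stone for $\cA=\SpM,\SLM$). I would first observe that the uniform Sobolev estimates behind Proposition~\ref{prop:incl}, together with moderate growth via \thetag{\ref{ssec:ws}.\ref{eq:dc1}}, show that \eqref{cond:W} and \eqref{cond:S} each imply \eqref{cond:B} for $f$, so in every remaining case $f$ satisfies \eqref{cond:B}. Fix $K \in \sK(U)$; then all derivatives of $\tilde f := (0,f)$ are uniformly $\BM$-bounded on $K \times \R^n$, and $\inf_{(u,x) \in K \times \R^n} \det dF_u(x) > 0$ by continuity of the plot on the compact $K$. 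At each base point $(u_0,b) \in K \times \R^n$ I run the Yamanaka majorant argument from the proof of Proposition~\ref{prop:Bgroup}, Case $\cA = \BM$, verbatim on the Banach space $E \times \R^n$: with $a = \tilde G(u_0,b) \in \{u_0\} \times \R^n \subseteq K \times \R^n$, $T = d\tilde F(a)^{-1}$, and $\ph = \Id - T \o \tilde F$ as in \eqref{eq:ph}, the relation $\tilde G = T + \ph \o \tilde G$ and Faà di Bruno give $\|\tilde G^{(k)}(u_0,b)\| \le g_N^{(k)}(0)$ inductively as in \eqref{eq:yamanaka}, using Lemma~\ref{lem:Yam}. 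The only change is that Lemma~\ref{lem:LA} is applied to the finite-dimensional block alone, bounding $\|dF_{u_0}(G_{u_0}(b))^{-1}\|$, which together with the boundedness of $\p_u f$ on $K \times \R^n$ yields a uniform $A = \sup_{K \times \R^n} \|d\tilde F^{-1}\| < \infty$ through the block-triangular inverse. Since the resulting constants are uniform over $K \times \R^n$, reading off the mixed partials $\p_u^{\ga_0}\p_x^{\ga'} g$ from the full Fréchet derivatives of $\tilde G$ gives \eqref{cond:B} for $g$; the Beurling case reduces to the Roumieu case through an intermediate weight sequence exactly as in Proposition~\ref{prop:Bgroup}, via \cite{Komatsu79b}.

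Finally I would upgrade to \eqref{cond:W} and \eqref{cond:S}. From $(\Id + f(u,\cdot)) \o (\Id + g(u,\cdot)) = \Id$ we get $g(u,y) = -f(u, y + g(u,y)) = -(f \o \ph)(u,y)$ with $\ph(u,y) = (u, y + g(u,y))$ of the form \eqref{eq:compph}. By the previous step $g$ satisfies \eqref{cond:B}, so $\ph$ satisfies the bounds \eqref{eq:Bestph} and $x \mapsto y + g(u,y)$ is, on $K \times \R^n$, a diffeomorphism with $\inf_x \det d_x \ph_2(u,x) > 0$ (from \eqref{cond:B} and Lemma~\ref{lem:LA}); for \eqref{cond:S} one also uses the uniform ratio bound $\tfrac{1+|x|}{1+|\ph_2(u,x)|} \le C$ on $K \times \R^n$, which holds because the underlying $C^\infty$-plot already forces $(1+|x|)^m |g(u,x)|$ to be bounded there for all $m$ (Theorem~\ref{thm:MichorMumford}). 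Then the Faà di Bruno estimate together with Lemma~\ref{ssec:Faa}, carried out exactly as in the cases $\cA = \SpM$ and $\cA = \SLM$ of the proof of Theorem~\ref{thm:compDiff} (with $f$ as the outer map satisfying \eqref{cond:W} resp.\ \eqref{cond:S} and $\ph$ supplying \eqref{eq:Bestph}), shows that $g = -f \o \ph$ satisfies \eqref{cond:W} resp.\ \eqref{cond:S}.

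The step I expect to be the main obstacle is the uniform-over-$K \times \R^n$ control of all mixed $u$- and $x$-derivatives of $g$ simultaneously in \eqref{cond:B}. The reduction to inverting $\tilde F = \Id + \tilde f$ on the Banach space $E \times \R^n$ is what makes the Yamanaka induction of Proposition~\ref{prop:Bgroup} apply almost verbatim; the one genuinely new ingredient is the uniform bound on $\|d\tilde F^{-1}\|$, which the block-triangular structure supplies from Lemma~\ref{lem:LA} on the $\R^n$-factor together with the boundedness of the first derivatives of $f$.
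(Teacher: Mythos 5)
Your proposal is correct and follows essentially the same route as the paper: reduction to plots via Lemma \ref{lem:key}, the Yamanaka majorant argument of Proposition \ref{prop:Bgroup} run on a fibered map over $U$ (with the uniform bound on $\|d\tilde F^{-1}\|$ supplied by the block-triangular inverse, Lemma \ref{lem:LA}, and the boundedness of $\p_u f$), reduction of Beurling to Roumieu via an intermediate sequence, and the upgrade to \eqref{cond:W} and \eqref{cond:S} by rewriting $g=-f\o\ph$ and reusing the proof of Theorem \ref{thm:compDiff}. The only (cosmetic) difference is that you invert $\tilde F(u,x)=(u,x+f(u,x))$ on $U\times\R^n$ directly, whereas the paper sets up the implicit-function graph map $\textbf{F}(u,x,y)=(u,x,y+f(u,x+y))$ on $U\times\R^n\times\R^n$; both carry the same information and your version is slightly more economical.
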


\begin{proof}
It suffices to prove that $\on{inv}$ maps 
$\CMb$-plots to $\CM$-plots; see Subsection \ref{ssec:localDC}. 
By Lemma~\ref{lem:key}, it is enough 
to consider $U \ni u \mapsto \Id+f(u,~)$, where 
$U$ is open in some Banach space $E$ and $f \in \CM(U\times \R^n,\R^n)$ satisfies \eqref{cond:B}, 
\eqref{cond:W}, \eqref{cond:S}, or \eqref{cond:D}, 
and to check that $g$ defined by $(\Id+f)\i=\Id+g$ satisfies \eqref{cond:B}, \eqref{cond:W}, \eqref{cond:S}, 
or \eqref{cond:D},
respectively. 
Since $g$ satisfies the implicit equation 
\begin{equation} \label{eq:implicit}
 (\Id +f) \o (\Id + g) = \Id \quad\Longleftrightarrow\quad  g(u,x) + f(u,x+g(u,x)) = 0
\end{equation}
we already know that $g$ is $\CM$ 
by the $\CM$ implicit function theorem; see Subsection \ref{ssec:localDC}.

Fix $K \in \sK(U)$ and let us introduce some notation.
We consider the mappings
\begin{align*}
    \textbf{f} &: U \x \R^n \x \R^n \to \R^n, \quad \textbf{f}(u,x,y) := y + f(u,x+y), \quad \text{and} \\
    \textbf{F} &: U \x \R^n \x \R^n \to U \x \R^n \x \R^n, \quad \textbf{F}(u,x,y) := (u,x,\textbf{f}(u,x,y)).
\end{align*}  
Then $\textbf{F}$ is a global $\CM$-diffeomorphism with inverse
\begin{align*}
    \textbf{G} &: U \x \R^n \x \R^n \to U \x \R^n \x \R^n, \quad \textbf{G}(u,x,y) := (u,x,y + g(u,x+y)).
\end{align*}
We shall abbreviate
\[
  H := E \x \R^n \x \R^n,\quad V := U \x \R^n \x \R^n,\quad  \text{ and } \quad L := K \x \R^n \x \R^n.  
\]
Fix $p_0 = (u_0,x_0,y_0) \in L$, and set $q_0=\textbf{F}(p_0)$ and $T =\textbf{F}'(p_0)^{-1} = \textbf{G}'(q_0)$. 
Defining 
  \begin{equation} \label{eq:bigph}
    \ph = \Id_V - T \o \textbf{F},  
  \end{equation}
  we have 
  \begin{equation} \label{eq:bigG}
    \textbf{G} = T + \ph \o \textbf{G}.
  \end{equation}

\paragraph{\bf Case $\cA = \BrM$}
  
  Since $f \in C^{\rM}(U \x \R^n,\R^n)$ satisfies \eqref{cond:B} and since $M=(M_k)$ is derivation closed, 
  there exist constants $C,\rh>0$ 
  so that  
  \begin{equation} \label{eq:fM}
    \|\p_1^{k}\p_2^\ell f(u,x)\|_{\Lin^{(k,\ell)}(E,\R^n;\R^n)} \le C \rh^{k+\ell-1} (k+\ell-1)!\, M_{k+\ell-1}
  \end{equation} 
  for $(u,x) \in K \x \R^n$ and $(k,\ell) \in \N^{2} \setminus\{0\}$; here $\p_1$ and $\p_2$ denote total derivatives. 
  Thus 
  \begin{align*}
    \|\p_1^{k} \p_2^{\ell} \p_3^{m} \textbf{f}(u,x,y)\|_{\Lin^{(k,\ell,m)}(E,\R^n,\R^n;\R^n)} 
    &\le
    \|(\p_1^{k} \p_2^{\ell+m} f)(u,x+y)\|_{\Lin^{(k,\ell+m)}(E,\R^n;\R^n)} 
    \\&\le
    C \rh^{k+\ell +m-1} (k+\ell +m-1)!\, M_{k+\ell +m-1}
  \end{align*}
  for all $p=(u,x,y) \in K \x \R^n \x \R^n$ and $k,\ell,m \in \N$ with $k+\ell+m\ge 2$.
  In view of \eqref{eq:bigph}, 
  we then find 
  \begin{equation} \label{eq:estph}
    \|\ph^{(h)}(p)\|_{\Lin^h(H;H)} \le C \|T\|_{\Lin(H;H)} \rh^{h-1}\, (h-1)!\, M_{h-1}, \quad \forall p \in L, 
    h \in \N_{\ge 2};
  \end{equation}
  possibly with different constants $C,\rh>0$.
  We have 
  \[
    \textbf{F}'(p) =\textbf{F}'(u,x,y) = 
    \begin{pmatrix}
     \Id_{E \times \R^n} & 0 \\
     \p_{(u,x)} \textbf{f}(u,x,y) & \I_n + \p_2 f(u,x+y)  
    \end{pmatrix}
  \]
  and hence
  \[
    \textbf{F}'(p)^{-1} = 
    \begin{pmatrix}
     \Id_{E \times \R^n} & 0 \\
     - (\I_n + \p_2 f(u,x+y))^{-1} \p_{(u,x)} \textbf{f}(u,x,y) & (\I_n + \p_2 f(u,x+y))^{-1}  
    \end{pmatrix}.
  \]
  It follows that
  \begin{align} \label{eq:A}
    A&:=\sup_{p \in L} \|\textbf{F}'(p)^{-1}\|_{\Lin(H;H)} < \infty, 
  \end{align}
  since $\inf_{(u,x) \in K \times \R^n} \det (\I_n + \p_2 f(u,x)) >0$ and by Lemma \ref{lem:LA},
  and since 
  $\p_{(u,x)} \textbf{f}(u,x,y) = \p_1 f(u,x + y) + \p_2 f(u,x + y)$ is bounded for $p \in L$.  
  
  Define $\ps_N$ and $g_N$ as in Lemma~\ref{lem:Yam}. Then
  \begin{gather*}
  \|\textbf{G}'(q_0)\|_{\Lin(H;H)} =\|\textbf{F}'(p_0){-1}\|_{\Lin(H;H)} = \|T\|_{\Lin(H;H)} \le A = g_N'(0) \quad \text{ and } \\
  \|\ph^{(h)}(p_0)\|_{\Lin^h(H;H)} \le \ps_N^{(h)}(0),  \quad 2 \le h \le N,
  \end{gather*}
  by \eqref{eq:estph}.
  Applying Fa\`a di Bruno's formula (\ref{ssec:Faa}.\ref{eq:FaaF}) to \eqref{eq:bigG} and \thetag{\ref{lem:Yam}.\ref{eq:g_N}} 
  we can deduce inductively (cf.\ (\ref{prop:Bgroup}.\ref{eq:yamanaka})) that, for $2 \le h \le N$,
  \begin{align*} 
  \|\textbf{G}^{(h)}(q_0)\|_{\Lin^h(H;H)} \le g_N^{(h)}(0) = h!\, c_h < \frac{1}{4 M_1(CA+1) \rh} (4A(CA+1) \rh)^{h}\, h!\, M_{h}. 
  \end{align*}
  Since $N$ and $(u_0,x_0) \in K \times \R^n$ were arbitrary, this implies that $g$ satisfies \eqref{cond:B}; 
  we know from Theorem \ref{thm:MichorMumford} that $g$ and $g^{(1)}$ are bounded on $K \times \R^n$.

\paragraph{\bf Case $\cA = \BbM$}
  This follow from the Roumieu case. 
  More precisely,
  the proof of Proposition \ref{prop:Bgroup}, Case $\cA = \BbM$, applied to   
  \[
    L_k := \frac{1}{k!} \sup_{(u,x) \in K \times \R^n} \|f^{(k)}(u,x)\|_{\Lin^k(E \times \R^n;\R^n)}
  \]
  provides a log-convex derivation closed sequence $N=(N_k)$ satisfying $N_{k+1}/N_k \to \infty$ and $L \le N \lhd M$. 
  So $f$ satisfies the Roumieu type estimate \eqref{eq:fM} for $M=(M_k)$ replaced by $N=(N_k)$, and the 
  arguments presented in ``Case $\cA = \BrM$'', which only require that $N=(N_k)$ be log-convex and derivation closed, 
  imply that $g$ satisfies the Roumieu version of \eqref{cond:B} for $N=(N_k)$. Thanks to $N \lhd M$, $g$ also 
  satisfies the Beurling version of \eqref{cond:B} for $M=(M_k)$.

\paragraph{\bf Case $\cA \in \{\SpM,\SLM\}$}
We may infer from Proposition \ref{prop:incl} and from Case $\cA=\BM$ that $g$
satisfies \eqref{cond:B}.
Let us define $\ph=(\ph_1,\ph_2)$ by setting
\[
  \ph_1(u,x) := u \quad \text{ and } \quad \ph_2(u,x) := x + g(u,x)
\]   
such that \eqref{eq:implicit} becomes
\begin{equation} \label{eq:implicit2}
  g = - f \o \ph.  
\end{equation}
We are now in the situation of \thetag{\ref{thm:compDiff}.\ref{eq:compph}} (for $h=-g$). 
The proof of Theorem \ref{thm:compDiff}, Case $\cA \in \{\SpM,\SLM\}$, 
(applied to \eqref{eq:implicit2} instead of \thetag{\ref{thm:compDiff}.\ref{eq:compph}}) 
shows that $g$ satisfies \eqref{cond:W} or \eqref{cond:S}, respectively.

\paragraph{\bf Case $\cA = \DM$}
The identity \eqref{eq:implicit} implies that $\on{supp} f(u,~) \subseteq \on{supp} g(u,~)$, and so 
$f$ satisfies \eqref{cond:D}.
\end{proof}

\section{Regularity} \label{sec:reg}

We assume that $M=(M_k)$ satisfies Hypothesis \ref{ssec:localDC} and that $L=(L_k)$ satisfies $L_k\ge 1$ for all $k$.

Following \cite{KMRu}, see also \cite{KM97r} and \cite[38.4]{KM97}, 
a $C^{[M]}$-Lie group $G$ with Lie algebra $\mathfrak g=T_eG$ 
is called $C^{[M]}$-regular if the following holds:
\begin{itemize}
\item 
For each $C^{[M]}$-curve 
$X\in C^{[M]}(\mathbb R,\mathfrak g)$ there exists a $C^{[M]}$-curve 
$g\in C^{[M]}(\mathbb R,G)$ whose right logarithmic derivative is $X$, i.e.,
\[
\begin{cases} g(0) &= e \\
\partial_t g(t) &= T_e(\mu^{g(t)})X(t) = X(t).g(t)
\end{cases} 
\]
The curve $g$ is uniquely determined by its initial value $g(0)$, if it
exists.
\item
Put $\on{evol}^r_G(X)=g(1)$, where $g$ is the unique solution required above. 
Then $\on{evol}^r_G: C^{[M]}(\mathbb R,\mathfrak g)\to G$ is required to be
$C^{[M]}$ also. 
\end{itemize}

\begin{theorem} \label{thm:reg}
  $\DiffBM$, $\DiffSpM$, $\DiffSLM$, and $\DiffDM$ are $\CM$-regular.
\end{theorem}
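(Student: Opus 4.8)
The plan is to verify, for each $\cA\in\{\BM,\SpM,\SLM,\DM\}$, the two conditions defining $\CM$-regularity in Section~\ref{sec:reg}. Since right translation on $\Diff\cA$ is composition, the right logarithmic derivative equation $\p_t g(t)=X(t).g(t)$ for $X\in\CM(\R,\cA(\R^n,\R^n))$ is nothing but the flow equation
\[
  \p_t g(t,x)=X(t,g(t,x)),\qquad g(0,x)=x,
\]
equivalently $\p_t h=X(t,\Id+h)$, $h(0,\cdot)=0$, for $g=\Id+h$. First I would record the classical input: by the continuous inclusions of Proposition~\ref{prop:incl} ($\BM,\SpM,\SLM,\DM$ into $\cB,\Sp,\cS,\cD$) and the $C^\infty$-regularity of $\DiffB$, $\DiffSp$, $\DiffS$, $\DiffD$ (Theorem~\ref{thm:MichorMumford}), the flow $g(t)$ exists, is unique, is $C^\infty$, is globally defined in $t$ and $x$ (the vector field lies in $\cB$ and is therefore complete), and lies in the corresponding classical group. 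In particular each $g(t)$ is a diffeomorphism with $\inf_x\det dg(t,x)>0$, and for $\cA\in\{\SpM,\SLM,\DM\}$ it additionally has the decay or support properties of $\Sp,\cS,\cD$—exactly the facts needed to invoke the composition estimates of Section~\ref{sec:comp}.

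To prove both that the evolution is a $\CM$-curve into $\Diff\cA$ and that $\on{evol}^r$ is $\CM$, in one stroke, I would argue on plots, treating the flow time $t$ on the same footing as the plot parameter. By the remarks opening Section~\ref{sec:plots} it suffices to send $\CM_b$-plots to $\CM$-plots, so I fix $U\subseteq E$ open in a Banach space and a plot $u\mapsto X(u,\cdot)\in\CM(\R,\cA)$; by the exponential law of Subsection~\ref{ssec:localDC} and Lemma~\ref{lem:key} this is the same as a $\CM$-map $X:U\times\R\times\R^n\to\R^n$ satisfying the appropriate global estimate \eqref{cond:B}, \eqref{cond:W}, \eqref{cond:S}, or \eqref{cond:D} over the base $U\times\R$. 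Viewing $X(u,\cdot,\cdot)$ as a time-dependent $\CM$-vector field on $E\times\R^n$ with the parameter $u$ folded in, the $\CM$-flow theorem on Banach spaces quoted in Subsection~\ref{ssec:localDC} shows that $g(u,t,x)$ is $\CM$ jointly in $(u,t,x)$. What remains—and this is the whole point—is to upgrade this to the uniform-in-$x$ growth that characterizes a $\CM$-plot of $\Diff\cA$: to show that $(u,t,x)\mapsto g(u,t,x)-x$ again satisfies \eqref{cond:B}, \eqref{cond:W}, \eqref{cond:S}, or \eqref{cond:D} over $U\times\R$.

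For the main estimate I would differentiate the flow equation in the base variables $(u,t)$ and in $x$ and expand $\p^\ga\big(X(u,t,\Id+h)\big)$ by Fa\`a di Bruno's formula (\ref{ssec:Faa}.\ref{eq:FaaBM}), exactly as in the proofs of Theorems~\ref{thm:compDiff} and \ref{thm:invB} (using Convention~\ref{convention}); the resulting combinatorial sum is controlled by Lemma~\ref{ssec:Faa}. To turn the recursive differential inequality for the weighted derivative norms into a genuine $[M]$-bound with a single $\rh$, I would feed it into the scalar majorant $g_N$ of Lemma~\ref{lem:Yam} (Yamanaka's method, already used for inversion), deducing inductively over the order of differentiation and Gronwall-wise over the compact time interval $[0,1]$ that the weighted norms are dominated by $g_N^{(k)}(0)=k!\,c_k< A(4A(CA+1)\rh)^{k-1}(k-1)!\,M_{k-1}$, which furnishes the Roumieu estimate. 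The non-sup cases reuse the devices of Section~\ref{sec:comp}: the change of variables (\ref{thm:Wcomp2}.\ref{eq:Wcomp2}), whose Jacobian is bounded below because $g(u,t,\cdot)\in\DiffSp$, for \eqref{cond:W}; the weight comparison $(1+|x|)/(1+|g(u,t,x)|)\le C$, valid since $g(u,t,\cdot)-\Id\in\cS$ decays, for \eqref{cond:S}; and for \eqref{cond:D} the elementary observation that a point where $X(u,t,\cdot)$ vanishes for all $t$ is fixed by the flow, so $\on{supp}\big(g(u,t,\cdot)-\Id\big)$ stays inside the common compact support set of $X$.

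The main obstacle is precisely this uniformity: the Banach-space $\CM$-flow theorem yields only local estimates with constants depending on the base point, whereas Lemma~\ref{lem:key} demands a single $\rh$ valid for all $x\in\R^n$ and all $u$ in a compact set. Controlling the Denjoy--Carleman growth of arbitrarily high mixed derivatives uniformly in $x$ is where log-convexity and moderate growth enter decisively (through Lemma~\ref{ssec:Faa} and \thetag{\ref{ssec:ws}.\ref{eq:mg0}}) and where the Yamanaka majorant does the real work. The Beurling case $\cA=\BbM$ would then be reduced to the Roumieu estimate exactly as in Proposition~\ref{prop:Bgroup}: replacing $M$ by a log-convex, derivation-closed sequence $N$ with $L\le N\lhd M$ produced by \cite{Komatsu79b}, one runs the Roumieu argument for $N$ and concludes via $N\lhd M$. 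Having shown that $\on{evol}^r$ maps $\CM_b$-plots to $\CM$-plots, the $\CM$-regularity of all four groups follows.
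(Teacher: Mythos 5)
Your proposal follows the same architecture as the paper's proof: reduce to $\CM_b$-plots, use Lemma~\ref{lem:key} to translate into the uniform conditions \eqref{cond:B}, \eqref{cond:W}, \eqref{cond:S}, \eqref{cond:D}, import existence, completeness and the classical decay/support properties of the flow from Theorem~\ref{thm:MichorMumford}, treat the plot parameter and the time variable on the same footing, handle the $\SpM$ and $\SLM$ cases by feeding the flow equation $f_t=X(t,\Id+f)$ into the composition estimates of Section~\ref{sec:comp} (change of variables, weight comparison), and settle $\DM$ by the support argument. The one point where you diverge is the crucial uniform $[M]$-estimate \eqref{cond:B} for the flow itself: the paper does not prove this inline but cites \cite{Yamanaka91} (see also \cite{Komatsu80}), whose quantitative form yields \eqref{cond:B} for $\on{Fl}^{(1,X)}$ on $(a,b)\times\R^n$ directly from \eqref{cond:B} for $(1,X)$, whereas you propose to rederive it via Fa\`a di Bruno plus the majorant of Lemma~\ref{lem:Yam} and a Gronwall argument. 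Be aware that Lemma~\ref{lem:Yam} as stated majorizes the fixed-point equation $G=T+\ph\o G$ of the inverse function theorem, not an evolution equation; for the flow one needs a majorant \emph{ODE} (a function $w$ with $w'=\Phi(w)$ dominating the weighted derivative norms), which is precisely the content of Yamanaka's ODE paper and requires a different, though analogous, construction. So your plan at that step is the right program but not yet a proof as written; everything else, including the reduction of the Beurling case to the Roumieu case via \cite{Komatsu79b} and the integration in $t$ needed to pass from the estimate on $f_t$ to the estimate on $f$ in the $\SpM$ and $\SLM$ cases, matches the paper.
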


\begin{proof}
Let $\cA \in \{\BM,\SpM,\SLM,\DM\}$.
Let $\R \ni t\mapsto X(t,~)$ 
be a $C^{[M]}$-curve in the Lie algebra 
$\X_{\cA}(\R^n)=\cA(\R^n,\R^n)$. 
By Lemma \ref{lem:key}, 
we can assume that $X \in \CM(\R \times \R^n,\R^n)$ and satisfies 
\eqref{cond:B}, \eqref{cond:W}, \eqref{cond:S}, or \eqref{cond:D}.
The evolution of this time dependent vector field is 
the function given by the ODE
\begin{align} \label{eq:ODE}
\begin{split}
  &\on{Evol}(X)(t,x) = x+f(t,x), \\
  &\begin{cases} \p_t (x+f(t,x)) =f_t(t,x)= X(t,x+f(t,x)),\\
  f(0,x)=0. \end{cases}
\end{split}
\end{align}
Consider the autonomous vector field $(1,X)(t,x)=(1,X(t,x))$ on 
$\mathbb R\x \mathbb R^{n}$ and its flow $t\mapsto \on{Fl}^{(1,X)}_t$ given by
$\p_t\on{Fl}^{(1,X)}_t(s,x) = (1,X)(\on{Fl}^{(1,X)}_t(s,x))$ with initial condition 
$\on{Fl}^{(1,X)}_0(s,x)=(s,x)$.
This flow is complete since $(1,X)$ is bounded. 
Then $(t,\on{Evol}(X)(t,x)) = (t, x+f(t,x)) = \on{Fl}^{(1,X)}_t(0,x)$
and 
$(0,\on{Evol}(X)(t)^{-1}(x)) = \on{Fl}^{(1,X)}_{-t}(t,x)$.

We have to show the following:
\begin{itemize}
  \item $f(t,~)\in \cA(\R^n,\R^n)$ for each $t\in \R$
  \item $t \mapsto f(t,~) \in \cA(\R^n,\R^n)$ is $\CM$
  \item $X \mapsto f$ is $\CM$
\end{itemize}

\paragraph{\bf Case $\cA = \BM$}
In this case $X$ satisfies 
\eqref{cond:B}. Thus $(1,X)$ satisfies \eqref{cond:B} on $(a,b)\x \mathbb R^n$ for each bounded 
interval $(a,b)$.
By \cite{Yamanaka91}, see also \cite{Komatsu80}, its flow $\on{Fl}^{(1,X)}$ satisfies \eqref{cond:B} 
on $(a,b)\x \mathbb R^n$, and thus 
$t \mapsto f(t,~) \in \BM(\R^n,\R^n)$ is $\CM$, by Lemma \ref{lem:key}. 
In order to prove that 
\[
  \CM(\R,\X_{\BM}(\R^n))\ni X\mapsto \on{Evol}(X)(1,~)\in \DiffBM 
\]
is $\CM$, we consider a $\CM$-plot $X$ in $\CM(\R,\X_{\BM}(\R^n))$, i.e., 
$(u,t) \mapsto X(u,t,~) \in \CM(U \times \R,\X_{\BM}(\R^n))$.
Then we can argue as before.
Since $(0,\on{Evol}(X)(t)^{-1}(x)) = \on{Fl}^{(1,X)}_{-t}(t,x)$, we also have
that $\inf_{x \in \R^n} \det (\p_x \on{Evol}(X)(t,x))>0$.

\paragraph{\bf Case $\cA \in \{\SpM, \SLM\}$} 
By Case $\cA = \BM$ and Proposition \ref{prop:incl}, we already know that $f$ satisfies \eqref{cond:B}.
By \eqref{eq:ODE} we are in the situation of \thetag{\ref{thm:compDiff}.\ref{eq:compph}}. 
The proof of Theorem \ref{thm:compDiff} implies that $f_t$ satisfies \eqref{cond:W} or \eqref{cond:S}, respectively. 
Since we know from Theorem \ref{thm:MichorMumford} that $\|f(t,~)\|_{L^p(\R^n)}$ or 
$\sup_x(1+|x|)^p |f(t,x)|$ are bounded locally in $t$
(and since $M=(M_k)$ is increasing), 
we may conclude that $f$ satisfies \eqref{cond:W} or \eqref{cond:S}, respectively. 
Then we can finish the proof as in Case $\cA = \BM$.  

\paragraph{\bf Case $\cA = \DM$}
Since $X$ satisfies \eqref{cond:D}, for each $C>0$
there exists $r>0$ so that $\on{supp} X(t,~) \subseteq \overline {B_r(0)}$ for all $0 \le t \le C$, where 
$\overline{B_r(0)} \subseteq \R^n$ denotes the closed ball of radius $r$ centered at $0$. 
For $0 \le t \le C$ we consider  
\begin{equation} \label{eq:regD2}
  |f(t,x)|\le \int_0^t  |f_t(s,x)|ds=\int_0^t |X(s,x+f(s,x))|\,ds.
\end{equation}
It follows that, for $(t,x) \in [0,C] \times \R^n$, we have 
$|f(t,x)| \le t B$, where $B= \max\{|X(t,x)| : (t,x) \in [0,C] \times \R^n\}$, and hence
if $|x|>r+tB$ then $f(t,x)=0$, by \eqref{eq:regD2}. 
Similarly for negative $t$.
That means 
that $f$ satisfies \eqref{cond:D}, and thus 
$t \mapsto f(t,~) \in \DM(\R^n,\R^n)$ is $\CM$, by Lemma~\ref{lem:key}.
We may finish the proof as in Case $\cA = \BM$.
\end{proof}

\section{End of proof of Theorem \ref{thm:main}} \label{sec:end}

We assume that $M=(M_k)$ satisfies Hypothesis \ref{ssec:localDC} and that $L=(L_k)$ satisfies $L_k\ge 1$ for all $k$.

Theorems \ref{thm:compDiff}, \ref{thm:invB}, and \ref{thm:reg} imply that 
$\DiffDM$, $\DiffSLM$, $\DiffSpM$, and $\DiffBM$ are $\CM$-regular Lie groups.

That, for $1 \le p<q$, 
\[
    \DiffDM \!\!\rightarrowtail\!\! \DiffSLM \!\!\rightarrowtail\!\! \DiffSpM \!\!\rightarrowtail\!\! \Diff{W^{[M],q}(\R^n)} 
    \!\!\rightarrowtail\!\! \DiffBM
\]
are $\CM$ injective group homomorphisms follows from Proposition \ref{prop:incl} and the fact that 
a linear mapping between convenient vector spaces is $\CM$ if and only if it is bounded, see \cite[8.3]{KMRu}.

It remains to show that each group in the above diagram is a normal subgroup of the groups on its right.
It suffices to show that each group is a normal subgroup in $\DiffBM$.
So let $\Id+g \in \DiffBM$ and $(\Id+g)^{-1} = \Id +f \in \DiffBM$, which implies $g(x)+f(x+g(x))=0$.
Then, for $\Id + h \in \DiffBM$, 
\begin{align}\label{eq:normal}
\begin{split} 
  &((\Id+g)^{-1} \o (\Id+h) \o (\Id+g))(x) = ((\Id+f) \o (\Id+h) \o (\Id+g))(x) \\
  &= x + g(x) + h(x+g(x)) + f(x + g(x) + h(x+g(x))) \\
  &= x + h(x+g(x)) + f(x + g(x) + h(x+g(x)))  - f(x+g(x)) \\
  &= x + h(x+g(x)) + \int_0^1 df(x + g(x) + t h(x+g(x)))(h(x+g(x)))\, dt.
\end{split}
\end{align}

Assume that $h \in \SpM(\R^n,\R^n)$ or $h \in \SLM(\R^n,\R^n)$.
Then $x \mapsto h(x+g(x))$ is in $\SpM(\R^n,\R^n)$ or in $\SLM(\R^n,\R^n)$, by Theorem \ref{thm:Wcomp2} or Theorem \ref{thm:Scomp2}, 
respectively, which implies the assertion, 
since $[0,1] \times \R^n \ni (t,x) \mapsto df(x + g(x) + t h(x+g(x)))$ is $\BM$.

If $h$ has compact support, then so does \eqref{eq:normal}. The proof of Theorem \ref{thm:main} is complete.

\section{Composition is not \texorpdfstring{$C^{\bM}$}{CM} on \texorpdfstring{$\Diff{\cBl^{\bM}}(\R)$}{DiffBMloc}}\label{sec:local}

We define 
\[
  \cBl^{[M]}(\mathbb R^n) := C^{[M]}(\mathbb R^n) \cap \mathcal B(\mathbb R^n)
\]
and 
\[
  \Diff{\cBl^{[M]}}(\R^n) := \big\{F=\Id+f: f\in \cBl^{[M]}(\R^n,\R^n), 
    \inf_{x\in \R^n} \det(\mathbb I_n + df(x)) >0\big\}. 
\]
On $\cBl^{[M]}(\R^n)$ we consider the topology induced by its inclusion in the diagonal of $C^{[M]}(\mathbb R^n) \times \cB(\mathbb R^n)$.

We assume that $M=(M_k)$ satisfies Hypothesis \ref{ssec:localDC}.

\begin{lemma} \label{lem:B}
  Let $U$ be open in a Banach space $E$.
  For $f \in C^{[M]}(U \times \R^n)$ consider the following condition:
  \begin{equation}\label{cond:Bloc}
    \tag{$C\cBl$} \forall K \in \sK(U) ~ \Box \rh>0 ~\forall \al \in \N^n: 
    \sup_{\substack{k\in \N\\ (u,x) \in K \times \R^n\\ \|v_j\|_E \le 1}} 
    \frac{|\p_u^k\p_x^{\al} f(u,x)(v_1,\dots,v_k)|}{\rh^{k}\,  k!\, M_{k}} < \infty.  
  \end{equation}
  Then
  \begin{align*}
    C^{[M]}_b(U, \cBl^{[M]}(\mathbb R^n)) 
    &\subseteq \big\{f^\vee : f\in C^{[M]}(U \times \R^n) \text{ satisfies } \eqref{cond:Bloc}\big\}
    \\&\subseteq C^{[M]}(U, \cBl^{[M]}(\mathbb R^n)).
  \end{align*}
  In the Beurling case $C^{[M]}=C^{(M)}$ the inclusions are equalities.
\end{lemma}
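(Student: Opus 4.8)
The plan is to adapt the argument for Case $\cA=\BM$ in the proof of Lemma~\ref{lem:key} to the diagonal topology that defines $\cBl^{[M]}(\R^n)$. The key structural observation is that, since $\cBl^{[M]}(\R^n)$ carries the initial structure induced by the two inclusions $\iota_1\colon\cBl^{[M]}(\R^n)\hookrightarrow\CM(\R^n)$ and $\iota_2\colon\cBl^{[M]}(\R^n)\hookrightarrow\cB(\R^n)$, a mapping $f^\vee$ lies in $\CM(U,\cBl^{[M]}(\R^n))$ (resp.\ in $\CM_b(U,\cBl^{[M]}(\R^n))$) if and only if both $\iota_1\o f^\vee$ and $\iota_2\o f^\vee$ do so in the respective target spaces. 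For the $\CM(\R^n)$-component this is automatic: exactly as at the start of the proof of Lemma~\ref{lem:key}, the $\CM$-exponential law (Theorem~\ref{ssec:localDC}) gives $f\in\CM(U\times\R^n)\Leftrightarrow f^\vee\in\CM(U,\CM(\R^n))$, and this holds for every $f$ appearing in the statement. Thus the whole lemma reduces to understanding the $\cB(\R^n)$-component.

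First I would prove the left-hand inclusion. If $f^\vee\in\CM_b(U,\cBl^{[M]}(\R^n))$, then, since a bounded subset of $\cBl^{[M]}(\R^n)$ is in particular bounded in $\cB(\R^n)$, the set $\set^M_{K,\rh}(f^\vee)$ is bounded in $\cB(\R^n)$ for every $K\in\sK(U)$ and $\Box\rh>0$ ($E$ being Banach, the quantifier over $B\in\sB(E)$ reduces to the unit ball). Boundedness in $\cB(\R^n)$ means finiteness of each seminorm $\|\cdot\|^{(|\al|)}_{\R^n}$, and writing this out for the elements $x\mapsto \p_u^k f(u,x)(v_1,\dots,v_k)/(\rh^k\,k!\,M_k)$ is precisely condition \eqref{cond:Bloc}. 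Note that here the $x$-derivatives are handled one seminorm (one $\al$) at a time, which is why \eqref{cond:Bloc} carries the weight $M_k$ rather than $M_{k+|\al|}$ and puts the radius only on the $u$-variable; in particular no appeal to moderate growth is needed.

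Next the right-hand inclusion. Let $f\in\CM(U\times\R^n)$ satisfy \eqref{cond:Bloc}. Read in the other direction, \eqref{cond:Bloc} says exactly that for each $K\in\sK(U)$ and $\Box\rh>0$ the set $\set^M_{K,\rh}(f^\vee)$ is bounded in every seminorm of $\cB(\R^n)$, hence bounded in $\cB(\R^n)$; together with smoothness of $f^\vee\colon U\to\cB(\R^n)$ (which follows from $f\in C^\infty(U\times\R^n)$ and the bounds in \eqref{cond:Bloc}) this gives $f^\vee\in\CM_b(U,\cB(\R^n))\subseteq\CM(U,\cB(\R^n))$. Combining this with $f^\vee\in\CM(U,\CM(\R^n))$ and the factorwise criterion above yields $f^\vee\in\CM(U,\cBl^{[M]}(\R^n))$.

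Finally, for the Beurling case there is nothing further to compute: by \cite[4.4,~4.5]{KMRu} one has $\CM_b(U,\cBl^{(M)}(\R^n))=\CM(U,\cBl^{(M)}(\R^n))$, so the two outer terms of the displayed chain coincide and both inclusions become equalities. I do not expect a serious obstacle anywhere; the only points requiring care are the two factorwise reductions, which rest on the facts that a subset of $\cBl^{[M]}(\R^n)$ is bounded precisely when it is bounded in both $\CM(\R^n)$ and $\cB(\R^n)$, and that $\cBl^{[M]}(\R^n)$ is a closed (hence convenient) subspace of the product. The reason the middle set is only sandwiched, and not pinned down, in the Roumieu case is that $\cBl^{\{M\}}(\R^n)$ need not admit a Baire topology on its dual, so that $\CM_b$ and $\CM$ may genuinely differ there.
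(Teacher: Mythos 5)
Your proposal is correct and follows essentially the same route as the paper: both inclusions are obtained by splitting the diagonal topology on $\cBl^{[M]}(\R^n)$ into its $\CM(\R^n)$-component (handled by the $\CM$-exponential law) and its $\cB(\R^n)$-component (where \eqref{cond:Bloc} is exactly boundedness of the sets $\set^M_{K,\rh}(f^\vee)$ in $\cB(\R^n)$, i.e.\ membership in $\CM_b(U,\cB(\R^n))$), and the Beurling equalities follow from Lemma \ref{ssec:localDC}. Your added remarks on the factorwise reduction and on why the Roumieu case is only sandwiched are consistent with, though more explicit than, the paper's argument.
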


\begin{proof}
If $f^\vee \in \CM_b(U, \cBl^{[M]}(\R^n))$, then $f \in \CM(U \times \R^n)$, 
by the $C^{[M]}$-exponential law (Theorem \ref{ssec:localDC}), and 
for each $K \in \sK(U)$ $\Box \rh>0$ the set $\set^M_{K,\rh}(f^\vee)$ is bounded $\cB(\R^n)$, i.e., 
$f$ satisfies \eqref{cond:Bloc}. This shows the first inclusion.
 
If $f \in \CM(U \times \R^n)$, then $f^\vee \in \CM(U, \CM(\R^n))$, by the $C^{[M]}$-exponential law. 
That $f$ satisfies \eqref{cond:Bloc} means that $f \in \CM_b(U, \cB(\R^n))$.  The second inclusion follows. 

Equality in the Beurling case follows from Lemma \ref{ssec:localDC}. 
\end{proof}

\begin{example} \label{example}
  Let $M= (M_k)$ be strongly non-quasianalytic (\ref{ssec:ws}.\ref{eq:snqa})
and assume $M_{k+1}/M_k  \nearrow \infty$. Assume that $C^{(M)} \supseteq \cG^{3/2}$, 
where $\cG^s := C^{\{(k!)^{s-1}\}}$, for $s\ge1$, denotes the Gevrey class of order $s$.
For instance,  
we may take $M_k := (k!)^{s-1}$ for any $s>3/2$.

By \cite{Petzsche88} there exists a sequence of functions $\ch_k \in \cD^{(M)}(\R)$ so that 
\begin{align*}
  & \exists A : \quad \on{supp} \ch_k \subseteq \Big[-\frac{AM_k}{M_{k+1}},\frac{AM_k}{M_{k+1}}\Big] \\
  & \ch_k^{(j)}(0) = \de_{jk} \\
  & \forall \rh > 0 ~\exists C(\rh), H(\rh) :\quad   \|\ch_k\|^M_{\R,\rh} = \sup_{\substack{j \in \N\\ x \in \R}} \frac{|\ch_k^{(j)} (x)|}{\rh^j j! M_j} \le C(\rh) \frac{H(\rh)^k}{k! M_k}.
\end{align*}
We define 
\[
\mu_k :=  2^k \sqrt{k!}\, M_k.
\]
Then, as $M_k \le M_{k+1}$, we have
\[
  \mu_{k+1} - \mu_k \ge 2^k \sqrt{k!}\,  M_k\, (2 \sqrt{k+1} - 1) \ge 1,   
\]  
in particular, $\mu_k \nearrow \infty$. By Stirling's formula, $k! \le k^k \le e^k k!$, and hence   
\begin{align} \label{eq:infty}
  \Big(\frac{\mu_k}{k! M_k}\Big)^{1/k} = \frac{2}{(\sqrt{k!})^{1/k}} &\to 0 \nonumber \\
  k \Big(\frac{\mu_k}{k! M_k}\Big)^{1/k} = \frac{2k}{(\sqrt{k!})^{1/k}} &\to \infty
\end{align}
as $k \to \infty$.
Let us set 
\[
  f(x) := \sum_{k\ge0} \mu_k \ch_k(Ak(x-\mu_k)), \quad x \in \R.
\]
We have 
\begin{align*}
  |f^{(j)}(x)| &\le \sum_k (Ak)^j \mu_k |\ch_k^{(j)}(Ak(x-\mu_k))| \\ 
  &\le  \sum_k (Ak)^j \mu_k \frac{H(\rh)^k}{k! M_k} C(\rh) \rh^j j! M_j \\
  &= C(\rh) (A \rh)^j j! M_j  \sum_k k^j \frac{(2 H(\rh))^k}{\sqrt{k!}}  \\
  &\le  \tilde C(\rh,j) (A \rh)^j j! M_j < \infty,
\end{align*}
for some constant $\tilde C(\rh,j)$ depending on $\rh$ and $j$.
On each compact subset of $\R$ the sum in the definition of $f$ is finite, since the support of the $k$th summand 
is contained in $[\mu_k-\frac{M_k}{kM_{k+1}},\mu_k + \frac{M_k}{kM_{k+1}}]$. It follows that $f$ is an element 
of $\cBl^{(M)}(\R)$. 
On the other hand
\begin{align} \label{eq:jder}
  f^{(j)}(\mu_j) =  \sum_k (Ak)^j \mu_k \ch_k^{(j)}(Ak(\mu_j-\mu_k))  = (Aj)^j \mu_j,  
\end{align} 
since $|\mu_j-\mu_k|\ge 1$ unless $j=k$. 

Let us set 
\[
  g_0(x) := \exp(-(x^2+\frac{1}{x^2})), \quad x \in \R\setminus\{0\},   \quad  g_0(0) := 0. 
\]
Then $g_0$ belongs to the Gevrey class $\cG^{3/2}(\R)$. Consequently, 
\[
  g(t,x) := t + g_0(t)g_0(x) \in \cG^{3/2}(\R^2). 
\]
Moreover $g$ satisfies \eqref{cond:Bloc}. Indeed, if $p\ge 1$ or $p = 0$ and $k \ge 2$, then for all compact 
$I \subseteq \R$ and all $\rh>0$,
\begin{align*}
  \frac{|\p_t^k \p_x^p g(t,x)|}{\rh^k k! M_k} = \frac{|g_0^{(k)}(t)| |g_0^{(p)}(x)|}{\rh^k k! M_k} 
  \le C(\rh) |g_0^{(p)}(x)| \quad \forall t \in I, k \in \N,  
\end{align*}
and the right-hand side is globally bounded in $x \in \R$. 
(The cases $\p_t g(t,x) = 1+ g_0'(t)g_0(x)$ and $g(t,x) = t+g_0(t)g_0(x)$ are easy.)
Note that 
\[
  \p_t^k g(0,x) = 
  \begin{cases}
    1 & k=1 \\
    0 & k \ne 1
  \end{cases}
  .
\]

The function 
\[
h(t,x) := f(x+g(t,x)) \in C^{(M)}(\R^2).
\] 
satisfies
\begin{align*}
  \p_t^j h(0,x) 
  = j! \sum_{\ell \ge 1} \sum_{\substack{\al_i>0\\ \al_1+ \cdots +\al_\ell =j}} \frac{f^{\ell}(x)}{\ell!}
    \prod_{i=1}^\ell \frac{\p_t^{\al_i} g(0,x)}{\al_i!} = f^{(j)}(x)
\end{align*}
and thus, by \eqref{eq:jder} and \eqref{eq:infty}, 
\begin{equation} \label{eq:cex}
  \Big(\frac{|\p_t^j h(0,\mu_j)|}{j! M_j}\Big)^{1/j} = \Big(\frac{|f^{(j)}(\mu_j)|}{j! M_j}\Big)^{1/j}  
  = A j \Big(\frac{\mu_j}{j! M_j}\Big)^{1/j} \to \infty  
\end{equation}  
as $j \to \infty$.
\end{example}

\begin{theorem}
  Left translation is not $C^{(M)}$ on $\Diff{\cBl^{(M)}}(\R)$.
\end{theorem}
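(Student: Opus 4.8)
The plan is to disprove $C^{(M)}$-regularity of left translation by exhibiting a single fixed $F\in\Diff{\cBl^{(M)}}(\R)$ for which $L_F\colon G\mapsto F\o G$ fails to be $C^{(M)}$; I would do this by constructing a $C^{(M)}$-curve $c$ in the group whose image $L_F\o c$ is not a $C^{(M)}$-curve. The building blocks are the functions $f$ and $g$ from Example \ref{example}. Writing $F=\Id+f$ and $G_t=\Id+g(t,\cdot)$, the nonlinear part of $F\o G_t$ is $g(t,\cdot)+h(t,\cdot)$ with $h(t,x)=f(x+g(t,x))$, which is exactly the composite analysed in the example. Before anything else I would arrange that the relevant maps are genuine diffeomorphisms: since $f\in\cB(\R)$ its first derivative is globally bounded, so replacing $f$ by $\ep f$ for sufficiently small $\ep>0$ gives $\inf_x(1+\ep f'(x))>0$, whence $F:=\Id+\ep f\in\Diff{\cBl^{(M)}}(\R)$ (and $\ep f\in\cBl^{(M)}(\R)$ since $f$ is); and since $g_0(0)=0$ with $g_0,g_0'$ bounded, $\inf_x(1+g_0(t)g_0'(x))>0$ for $t$ in a small interval $I\ni 0$, so $G_t\in\Diff{\cBl^{(M)}}(\R)$ for $t\in I$. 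Crucially, scaling by $\ep$ does not destroy the blow-up below, because $\ep^{1/j}\to 1$.

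Next I would verify that $c\colon I\to\Diff{\cBl^{(M)}}(\R)$, $t\mapsto G_t$, is a $C^{(M)}$-curve. In the global chart this means $t\mapsto g(t,\cdot)$ is $C^{(M)}$ into $\cBl^{(M)}(\R)$, and by Lemma \ref{lem:B} (equality holds here since we are in the Beurling case) this follows from the two facts already established in Example \ref{example}: $g\in\cG^{3/2}(\R^2)\subseteq C^{(M)}(\R^2)$ and $g$ satisfies \eqref{cond:Bloc}. Reading $L_F\o c$ in the chart, its nonlinear part is $g(t,\cdot)+\ep\,h(t,\cdot)$, so by Lemma \ref{lem:B} again $L_F\o c$ is a $C^{(M)}$-curve if and only if $g+\ep h$ satisfies \eqref{cond:Bloc}. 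Thus everything reduces to showing that $g+\ep h$ violates \eqref{cond:Bloc}.

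The decisive step, and the conceptual core, is the observation that \eqref{cond:Bloc} (for fixed compact set and fixed $\al$) defines a vector subspace, together with the fact that $\ep h$ fails it while $g$ satisfies it. For $K=\{0\}$ and $\al=0$, estimate \eqref{eq:cex} shows $(|\p_t^j h(0,\mu_j)|/(j!\,M_j))^{1/j}\to\infty$, so for every $\rh>0$ the supremum in \eqref{cond:Bloc} for $\ep h$ is infinite; hence $\ep h$ fails \eqref{cond:Bloc} for all $\rh$. Were $g+\ep h$ to satisfy \eqref{cond:Bloc}, then $\ep h=(g+\ep h)-g$ would too, a contradiction; so $g+\ep h$ fails \eqref{cond:Bloc} and $L_F\o c$ is not $C^{(M)}$. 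Finally, if $L_F$ were $C^{(M)}$ it would map the $C^{(M)}$-curve $c$ to a $C^{(M)}$-curve (curves being $1$-dimensional plots), contradicting this. Therefore left translation is not $C^{(M)}$ on $\Diff{\cBl^{(M)}}(\R)$. I expect the main obstacle to be bookkeeping rather than conceptual: one must take care that $F$ and the $G_t$ are honest elements of the group (handled by the scaling and the restriction to $t\in I$) without disturbing the growth recorded in \eqref{eq:cex}, and one must invoke the Beurling equality of Lemma \ref{lem:B} in both directions.
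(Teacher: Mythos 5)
Your proposal is correct and is essentially the paper's own proof, only written out in more detail: the paper likewise combines the composition formula (\ref{prop:Bgroup}.\ref{eq:group1}), the Beurling-case equality in Lemma \ref{lem:B}, and the blow-up estimate \thetag{\ref{example}.\ref{eq:cex}} for $h(t,x)=f(x+g(t,x))$, with a rescaling of $f$ and $g$ to ensure the maps are genuine elements of the group. Your explicit remarks that the blow-up survives the rescaling and that subtracting the admissible summand $g$ isolates the failing term $h$ are exactly the steps the paper leaves implicit.
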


\begin{proof}
  This follows, in view of (\ref{prop:Bgroup}.\ref{eq:group1}) and Lemma \ref{lem:B}, from 
  \thetag{\ref{example}.\ref{eq:cex}} with the above choices for $f$ and $g$; 
  by multiplying $f$ and $g$ by a suitable constant we can achieve that $\inf_x \p_x f(t,x)>-1$ and $\inf_x \p_x g(t,x)>-1$ 
  for all $t$. 
\end{proof}

\begin{theorem}
  Right translation is $\CM$ on $\Diff{\cBl^{[M]}}(\R^n)$.
\end{theorem}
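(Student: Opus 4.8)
The plan is to exploit an asymmetry between the two translations: whereas left translation $G \mapsto F \o G$ carries the \emph{moving} factor $G$ \emph{inside} the composition (which is exactly what the counterexample of Example~\ref{example} exploits), right translation by a fixed $G = \Id + g \in \Diff{\cBl^{[M]}}(\R^n)$ is \emph{affine} in the global chart. Indeed, writing $F = \Id + f$ we have $F \o G = \Id + \big(g + f \o G\big)$, so in the chart $f \mapsto \Id + f$ the map $R_G$ becomes $f \mapsto g + T_G(f)$ with $T_G(f) := f \o G$, where the additive term $g$ does not depend on $f$ and $T_G$ is \emph{linear} in $f$. Since adding a fixed vector is $\CM$ and a linear map between convenient vector spaces is $\CM$ if and only if it is bounded (\cite[8.3]{KMRu}), it suffices to show that $T_G$ is a well-defined bounded linear operator on $\cBl^{[M]}(\R^n,\R^n)$; the defining determinant condition is preserved because $\det d(F\o G)(x) = \det dF(G(x))\cdot \det dG(x)$ is bounded below by $\inf_x \det dF \cdot \inf_x \det dG > 0$, so $R_G$ indeed maps the group into itself.

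For well-definedness I first note that $G = \Id + g \in \cB_2(\R^n,\R^n)$, since its first derivative $\I_n + dg$ and all higher derivatives $g^{(k)}$ are globally bounded; hence Theorem~\ref{thm:Bcomp} gives $f \o G \in \cB(\R^n,\R^n)$ for $f \in \cB$, while stability of the \emph{local} class $C^{[M]}$ under composition gives $f \o G \in C^{[M]}(\R^n)$, so $T_G$ maps $\cBl^{[M]}$ into itself. For boundedness, recall that the topology on $\cBl^{[M]}(\R^n)$ is induced from the diagonal of $C^{[M]}(\R^n) \times \cB(\R^n)$, so a bounded set $\mathcal F \subseteq \cBl^{[M]}(\R^n,\R^n)$ is bounded in both factors and I must show the same for $T_G(\mathcal F)$. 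The $\cB$-part is immediate from Fa\`a di Bruno (\ref{ssec:Faa}.\ref{eq:FaaF}): each $(f \o G)^{(k)}$ is a \emph{finite} sum of terms built from $f^{(j)}(G(\cdot))$ (bounded uniformly over $\mathcal F$, as $\mathcal F$ is bounded in $\cB$) and the fixed bounded derivatives of $G$, whence $\sup_{f \in \mathcal F}\|f\o G\|^{(k)}_{\R^n} < \infty$ for every $k$.

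The substantive point is the $C^{[M]}$-part, and I expect it to be the main (though essentially routine) obstacle. Fixing $K \in \sK(\R^n)$, the image $G(K)$ is again compact, and boundedness of $\mathcal F$ in $C^{[M]}$ on $G(K)$ together with the $C^{[M]}$-estimates for $g$ on $K$ feed, via the multi-index Fa\`a di Bruno formula (\ref{ssec:Faa}.\ref{eq:FaaBM}) and the summation Lemma~\ref{ssec:Faa} (which turns $\sum \tfrac{\al!}{k_1!\cdots k_\ell!}A^{|\al|}M_{|\al|}M_{|\de_1|}^{|k_1|}\cdots$ into $B\,C^{|\ga|}M_{|\ga|}$ using moderate growth), into $\Box\,\tau>0$ with $\sup_{f \in \mathcal F}\|f \o G\|^M_{K,\tau} < \infty$. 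This is precisely the computation already carried out in the $\cA = \BM$ case of Theorem~\ref{thm:compDiff} (and Theorem~\ref{thm:Bcomp}), now localized to $K$ and made uniform over $\mathcal F$; the only care needed is the usual Roumieu/Beurling bookkeeping of the radii. Having shown $T_G(\mathcal F)$ bounded in both $\cB$ and $C^{[M]}$, we conclude that $T_G$ is bounded, hence that $R_G = g + T_G$ is $\CM$ in the chart, which is the assertion.
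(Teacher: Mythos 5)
Your proof is correct, but it takes a genuinely different route from the paper's. The paper never isolates the affine structure of right translation: it works with the plot characterization of Lemma~\ref{lem:B}, taking a plot $u\mapsto \Id+f(u,\cdot)$ with $f$ satisfying \eqref{cond:Bloc} and verifying via Fa\`a di Bruno that $h(u,x)=f(u,x+g(x))$ again satisfies \eqref{cond:Bloc}; the point there is that the inner map $x\mapsto x+g(x)$ does not involve the plot parameter $u$, so the $[M]$-estimates in the $u$-direction pass straight through and, for each fixed order $\ell$ of $x$-differentiation, the Fa\`a di Bruno sum is finite --- no summation lemma or moderate growth enters that verification. Your observation that $R_G$ is the restriction of the affine map $f\mapsto g+f\o G$, so that everything reduces via \cite[8.3]{KMRu} to boundedness of the linear operator $T_G$, is cleaner and makes the left/right asymmetry conceptually transparent (left translation is genuinely nonlinear in the chart, which is what Example~\ref{example} exploits); it also lets you treat the two halves of the diagonal topology, $C^{[M]}$ and $\cB$, completely separately. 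What it costs is a description of the bounded sets of the function space: in the Roumieu case you need that a set bounded in $C^{\{M\}}(\R^n)$ admits, for each $K\in\sK(\R^n)$, a \emph{single} radius $\rh$ with a uniform bound (regularity of the relevant inductive limits, in the spirit of Lemma~\ref{lem:cpreg}), a point you wave at with ``bookkeeping of the radii'' and which the paper's plot-based argument sidesteps by working with $\CM_b$-plots and the explicit condition \eqref{cond:Bloc}. One small correction: your $C^{[M]}$-half is the \emph{local} composition estimate (stability of $C^{[M]}$ under composition, made uniform over the bounded family on $G(K)$), not literally the global computation of the $\cA=\BM$ case of Theorem~\ref{thm:compDiff}, though the Fa\`a di Bruno bookkeeping is the same. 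Both proofs ultimately rest on the same estimates; yours packages them more conceptually, the paper's generalizes to the non-affine maps treated elsewhere in the text.
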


\begin{proof}
  Let $U$ be open in a Banach space $E$ 
  and let $f \in \CM(U \times \R^n,\R^n)$ satisfy condition \eqref{cond:Bloc} and $\inf_{x \in \R^n} \det (\I_n +df(u,x))>0$ 
  for all $u \in U$.
  Let $g \in \cBl^{[M]}(\R^n,\R^n)$ with $\inf_{x \in \R^n} \det (\I_n +dg(x))>0$. 
  In view of (\ref{prop:Bgroup}.\ref{eq:group1}) and Lemma \ref{lem:B} it suffices to show that 
  $h(u,x) := f(u,x+g(x))$ satisfies condition \eqref{cond:Bloc}. 
  By Fa\`a di Bruno's formula (\ref{ssec:Faa}.\ref{eq:FaaF}), for $k\in \N$, $\ell \in \N_{\ge1}$, $u \in U$, $x \in \R^n$, 
  and $\ph(x):= x+g(x)$,     
  \begin{align*}
    \frac{\|\p_1^k \p_2^\ell h(u,x)\|_{\Lin^{(k,\ell)}(E,\R^n;\R^n)}}{\ell!} 
    &\le  \sum_{j\ge 1} \sum_{\substack{\al\in \N_{>0}^j\\ \al_1+\dots+\al_j =\ell}}
  \frac{\|(\p_1^k \p_2^j f)(u,\ph(x))\|_{\Lin^{(k,j)}(E,\R^n;\R^n)}}{j!} 
  \\
  &\hspace{2.4cm} \x \prod_{i=1}^j \frac{\|\ph^{(\al_i)}(x)\|_{\Lin^{\al_i}(\R^n;\R^n)}}{\al_i!},
  \end{align*}
  and hence, since $f$ satisfies \eqref{cond:Bloc}, 
  \begin{align*}
    &\forall K \in \sK(U) ~ \Box \rh>0 ~\forall j \in \N_{\ge1} ~\exists C_{K,\rh,j}>0 ~\forall k\in \N ~\forall (u,x) \in K \times \R^n:\\
    &
    \frac{\|\p_1^k \p_2^\ell h(u,x)\|_{\Lin^{(k,\ell)}(E,\R^n;\R^n)}}{\rh^k k!\, M_k} 
    \le \ell! \sum_{j\ge 1}\!\! \sum_{\substack{\al\in \N_{>0}^j\\ \al_1+\dots+\al_j =\ell}}
    \!\!\frac{C_{K,\rh,j}}{j!}
    \prod_{i=1}^j \frac{\sup_x\|\ph^{(\al_i)}(x)\|_{\Lin^{\al_i}(\R^n;\R^n)}}{\al_i!}
    \\ 
    & \hspace{4.2cm} =: C(K,\rh,\ell,g),
  \end{align*}
  that means 
  \begin{align*}
    \forall K \in \sK(U) ~ \Box \rh>0 ~\forall \ell \in \N_{\ge1}: 
    \sup_{\substack{k\in \N \\ (u,x) \in K \times \R^n}}
    \frac{\|\p_1^k \p_2^\ell h(u,x)\|_{\Lin^{(k,\ell)}(E,\R^n;\R^n)}}{\rh^k k!\, M_k} 
    <\infty.
  \end{align*}
  One easily checks that this holds also for $\ell =0$. 
  Thus $h$ satisfies condition \eqref{cond:Bloc}. 
\end{proof}

\section{Extensions over \texorpdfstring{$\Diff\cA(\mathbb R)$}{DiffA} 
and Denjoy--Carleman solutions of the Hunter--Saxton equation on the real line}\label{sec:HS}

In this section we carry over the main results of the paper \cite{BBM14b} to the 
classes of 
diffeomorphism groups $\Diff\cA(\mathbb R)$ and show that the Hunter--Saxton equation is 
well-posed for all $\cA\in \{W^{[M],1},\SLM,\DM\}$.
We again assume that $M=(M_k)$ satisfies Hypothesis \ref{ssec:localDC} and that $L=(L_k)$ satisfies $L_k\ge 1$ for all $k$. 

\subsection{Extending the function spaces $\cA(\mathbb R)$}\label{HS1}
For $\cA\in \{W^{[M],1},\SLM,\DM\}$ we consider the space $\cA_2(\mathbb R):=\{f\in C^\oo(\mathbb R): f'\in\cA(\mathbb R)\}$ of 
antiderivatives of functions in $\cA(\mathbb R)$. 
Since $\cA(\R)  \subseteq L^1(\R)$ the limits
$f(\pm\oo):=\lim_{x\to\pm\oo}f(x)$
exist; thus $f\mapsto (f',f(-\infty))$ is a linear isomorphism 
$\cA_2(\mathbb R)\to \cA(\mathbb R)\x \mathbb R$ which we use to describe 
the convenient vector space structure on $\cA_2(\mathbb R)$. 
We consider 
the exact sequence 
\begin{equation*}
\xymatrix{
\cA_0(\mathbb R)\quad \ar@{^(->}[r] & 
 \cA_2(\mathbb R) 
\ar@{->>}[rrr]^{(\on{ev}_{-\infty},\on{ev}_\infty)} &&& \mathbb R^2
}
\end{equation*}
where $\cA_0(\mathbb R):= \ker(\ev_{-\infty},\ev_{\infty})$. 
We also consider the closed linear subspace 
$\cA_1(\mathbb R):=\ker(\ev_{-\oo})=\{f\in\cA_2(\mathbb R): f(-\infty)=0\}$ 
of antiderivatives of the form 
$x\mapsto \int_{-\infty}^x g(y)\,dy$ for $g\in \cA(\mathbb R)$.
For $\cA=\SLM$ or $=\DM$ we have $\cA_0=\cA$. For $\cA=W^{[M],1}$ we have 
$W^{[M],1}_0(\mathbb R)\ne W^{[M],1}(\mathbb R)$.

\subsection{The corresponding group extensions}\label{HS2}
For $\cA\in \{W^{[M],1},\SLM,\DM\}$ 
we consider the groups 
$$
\Diff\cA_i(\mathbb R) = \big\{\on{Id}+f: f\in \cA_i(\mathbb R), f'>-1\big\},\qquad i=\emptyset,0,1,2.
$$
 
\begin{theorem*} 
The groups $\Diff\DM(\mathbb R)$, 
$\Diff\DM_1(\mathbb R)$, 
$\Diff\DM_2(\mathbb R)$, 
$\Diff\SLM(\mathbb R)$, 
$\Diff\tensor{\cS}{}_{[L],1}^{[M]}(\mathbb R)$, 
$\Diff\tensor{\cS}{}_{[L],2}^{[M]}(\mathbb R)$, 
$\Diff W^{[M],1}(\mathbb R)$,
$\Diff W^{[M],1}_0(\mathbb R)$, 
$\Diff W^{[M],1}_1(\mathbb R)$, 
$\Diff W^{[M],1}_2(\mathbb R)$, 
are all $C^{[M]}$-regular Lie groups. 
We have the following injective $C^{[M]}$ group homomorphisms:
\[
  \xymatrix{
& & \Diff W^{[M],1}(\mathbb R) \ar@{^{ (}->}[d] & \\
\Diff\DM(\mathbb R)
 \ar@{{ >}->}[r] \ar@{^{ (}->}[d] &\Diff\SLM(\mathbb R) \ar@{^{ (}->}[d] \ar@{^{ >}->}[ru]
 \ar@{{ >}->}[r] &\Diff W^{[M],1}_0(\mathbb R) \ar@{^{ (}->}[d] &\\
\Diff\DM_1(\mathbb R) \ar@{{ >}->}[r] \ar@{^{ (}->}[d] &
\Diff\tensor{\cS}{}_{[L],1}^{[M]}(\mathbb R) \ar@{{ >}->}[r] \ar@{^{ (}->}[d]
 &\Diff W^{[M],1}_1(\mathbb R) \ar@{^{ (}->}[d] &\\
\Diff\DM_2(\mathbb R)
 \ar@{{ >}->}[r] &\Diff\tensor{\cS}{}_{[L],2}^{[M]}(\mathbb R)
 \ar@{{ >}->}[r] &\Diff W^{[M],1}_2(\mathbb R) \ar@{{ >}->}[r] &\Diff\BM(\mathbb R)\\
}  
\]
Each group is a normal subgroup in any other in which it is contained, in particular in 
$\Diff\BM(\mathbb R)$.
Moreover, the columns of the following diagram are $C^{[M]}$ extensions. They are splitting 
extensions (semidirect 
products) if and only if the weight sequence $M=(M_k)$ is non-quasianalytic. 
$$
\xymatrix{
 \Diff\DM(\mathbb R) \ar@{{ >}->}[r] \ar@{^{ (}->}[d] &\Diff\SLM(\mathbb R)
 \ar@{^{ (}->}[d] \ar@{{ >}->}[r] &\Diff W^{[M],1}_0(\mathbb R) \ar@{^{ (}->}[d]\\
\Diff\DM_2(\mathbb R) \ar@{ >->}[r] \ar@{->>}[d] &
\Diff\tensor{\cS}{}_{[L],2}^{[M]}(\mathbb R) \ar@{ >->}[r] \ar@{->>}[d] &
\Diff W^{[M],1}_2(\mathbb R) \ar@{->>}[d]^{(\on{Shift}_\ell,\on{Shift}_r)}\\
\mathbb R^2 \ar@{=}[r] &\mathbb R^2 \ar@{=}[r] &\mathbb R^2\\
}$$
The extensions $\on{Shift}_r:\Diff\cA_1(\mathbb R)\to \mathbb R$ are always splitting.
\end{theorem*}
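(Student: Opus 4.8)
The plan is to deduce everything from Theorem~\ref{thm:main} together with the linear isomorphism $\cA_2(\R)\cong\cA(\R)\times\R$, $f\mapsto(f',f(-\infty))$, which fixes the convenient vector space structure on each $\cA_i(\R)$. First I would check that each $\Diff\cA_i(\R)$ is a group with $\CM$ composition and inversion. For $F=\Id+f$, $G=\Id+g$ with $f,g\in\cA_2(\R)$ one has $F\o G=\Id+h$ with $h=g+f\o(\Id+g)$, hence $h'=g'+(f'\o(\Id+g))(1+g')$; since $f',g'\in\cA(\R)$ and $\Id+g\in\BM_2(\R,\R)$, the composition Theorems~\ref{thm:Bcomp}, \ref{thm:Wcomp2}, \ref{thm:Scomp2} with the module structure of Proposition~\ref{prop:incl} give $h'\in\cA(\R)$, so $h\in\cA_2(\R)$; the very same estimates, run on the derivatives exactly as in Section~\ref{sec:comp} and Section~\ref{sec:inv}, show $\on{comp}$ and $\on{inv}$ are $\CM$. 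Because $x+g(x)\to\pm\infty$ as $x\to\pm\infty$, the end-values are additive, $h(\pm\infty)=f(\pm\infty)+g(\pm\infty)$, so $(\ev_{-\infty},\ev_\infty)\colon\Diff\cA_2(\R)\to\R^2$ is a $\CM$ homomorphism with kernel $\Diff\cA_0(\R)$, and it is surjective (rescale a fixed density in $\cA(\R)$ and add a constant). For $\CM$-regularity I would run the proof of Theorem~\ref{thm:reg} one derivative up: writing $\on{Evol}(X)(t,\cdot)=\Id+f(t,\cdot)$ for a $\CM$-curve $X$ in $\cA_2(\R)$, the function $u:=\p_x f$ solves the variational equation $\p_t u=(\p_x X)(t,\Id+f)(1+u)$, $u(0,\cdot)=0$; since $\p_x X\in\cA(\R)$, the flow argument of Theorem~\ref{thm:reg} (via Lemma~\ref{lem:key}) yields $u(t,\cdot)\in\cA(\R)$ and the $\CM$-dependence on $X$, while the two limits of $f(t,\cdot)$ are controlled by integrating the defining ODE.

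Next I would assemble the first diagram and prove normality. The injective homomorphisms come from the inclusions $\DM\subseteq\SLM\subseteq W^{[M],1}\subseteq W^{[M],1}_0$ of Proposition~\ref{prop:incl}, lifted to antiderivatives; being bounded linear they are $\CM$ by \cite[8.3]{KMRu}. Normality of each group in every larger one is the computation~\eqref{eq:normal}: conjugating $\Id+h$ by $\Id+g$ produces a term $x+h(x+g(x))+\int_0^1 df(\cdots)(h(x+g(x)))\,dt$ whose pieces land in the smaller ideal by the module structure of Proposition~\ref{prop:incl} and the composition theorems applied to $h\o(\Id+g)$. Exactness of the columns then follows from the previous paragraph, and each is a $\CM$ extension because the Lie-algebra sequence $\cA_0(\R)\to\cA_2(\R)\to\R^2$ splits \emph{linearly} by subtracting $a\,\th_-+b\,\th_+$ for fixed $\th_\pm\in\cA_2(\R)$ with $\th_\pm'\in\cA(\R)$ realizing the end-values $(1,0)$ and $(0,1)$ (such $\th_\pm$ exist from a rapidly decreasing density even in the quasianalytic case, and from a compactly supported one for $\DM$); this provides a $\CM$ local section of the quotient map.

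The hard part is the splitting dichotomy. For the group extension $\Diff\cA_2(\R)\to\R^2$ to split I need a $\CM$ homomorphic section $s\colon\R^2\to\Diff\cA_2(\R)$; differentiating at the identity, such an $s$ is the same as two \emph{commuting} generators $X,Y\in\cA_2(\R)$ with $(\ev_{-\infty},\ev_\infty)(X)=(0,1)$ and $(\ev_{-\infty},\ev_\infty)(Y)=(1,0)$, whence $s(a,b)=\on{Fl}^X_b\o\on{Fl}^Y_a$ by $\CM$-regularity. If $M=(M_k)$ is non-quasianalytic I would take $X,Y$ locally constant with $X',Y'\in\DM(\R)\subseteq\cA(\R)$ supported in disjoint compact intervals, so that $X\equiv0$ where $Y'\neq0$ and $Y\equiv0$ where $X'\neq0$; then $[X,Y]=XY'-YX'$ vanishes identically and the two flows shift only the prescribed end, giving the section. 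Conversely, if $M=(M_k)$ is quasianalytic, $[X,Y]=0$ forces $XY'=YX'$, so on the neighbourhood of $+\infty$ where $X\to1\neq0$ the field $Y$ solves $Y'=YX'/X$ and is therefore proportional to $X$; since $Y\to0$ the proportionality constant is $0$, i.e. $Y\equiv0$ near $+\infty$, and quasianalyticity of $\cA(\R)\subseteq\CM(\R)$ propagates this to $Y\equiv0$ on all of $\R$, contradicting $Y(-\infty)=1$. Hence the extension splits precisely when $M=(M_k)$ is non-quasianalytic, and I expect the quasianalytic impossibility to be the genuinely delicate point.

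Finally, $\on{Shift}_r\colon\Diff\cA_1(\R)\to\R$ always splits: here a single one-parameter group $t\mapsto\on{Fl}^X_t$ suffices, generated by any $X\in\cA_1(\R)$ with $X(-\infty)=0$, $X(\infty)=1$ and $X'\in\cA(\R)$. Such an $X$ exists with $X'$ a rapidly decreasing, not necessarily compactly supported, density of integral $1$, which is available even in the quasianalytic case, and a single generator carries no commutativity obstruction; integrating $\p_t\on{Fl}^X_t=X\o\on{Fl}^X_t$ against the bound $X\to1$ at $+\infty$ gives $\ev_\infty(\on{Fl}^X_t-\Id)=t$, so $t\mapsto\on{Fl}^X_t$ is the desired splitting homomorphism.
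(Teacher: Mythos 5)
Your proposal is correct and follows essentially the same route as the paper: reduction to Theorem~\ref{thm:main} with the plot conditions shifted one derivative up, additivity of the end-values giving the shift homomorphisms, commuting generators with disjointly supported derivatives for the non-quasianalytic splitting, the proportionality argument for the quasianalytic obstruction, and a single generator for $\on{Shift}_r$. The only differences are cosmetic: you establish the Lie group structure and regularity of the extended groups directly rather than invoking \cite[15.12]{MichorH} and \cite[38.6]{KM97}, and you spell out the quasianalytic non-splitting step (proportionality of $Y$ to $X$ near $+\infty$ where $X\ne 0$, then quasianalytic continuation of $Y\equiv 0$) more completely than the paper's one-line version.
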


\begin{proof}
That the groups  $\Diff\DM(\mathbb R)$, $\Diff\SLM(\mathbb R)$, 
$\Diff W^{[M],1}(\mathbb R)$, and $\Diff\BM(\mathbb R)$ are 
$\CM$-regular Lie groups is proved above.
The proof there can be adapted to the cases
$\Diff W^{[M],1}_i(\mathbb R)$ for $i=0,1,2$,  replacing \eqref{cond:W} by
\begin{align*}
                \forall K \in \sK(U) ~ \Box \rh>0 : &
    \sup_{\substack{k \in \N,\al \in \N_{>0}\\ u \in K\\  \|v_j\|_E\le 1}}  
    \frac{\int_{\R} |\p_u^k\p_x^{\al} f(u,x)(v_1,\dots,v_k)|\, dx}{\rh^{k+\al}\, 
    (k+\al)!\, M_{k+\al}}< \infty,
               \\&
    \sup_{\substack{k \in \N, u \in K\\  \|v_j\|_E\le 1}}  
    \frac{ |\p_u^k f(u,-\infty)(v_1,\dots,v_k)|}{\rh^{k}\, k!\, M_{k}}< \infty.
\end{align*}

The shift homomorphisms are given by 
$\on{Shift}_\ell(\on{Id}+f) = f(-\infty)$ and   
$\on{Shift}_r(\on{Id}+f) = f(\infty)$. 
If $M=(M_k)$ is non-quasianalytic we choose functions $f_\ell$ and $f_r$ in $\cA_2(\R)$ with 
$$
f_{\ell}(x) = \begin{cases} 1 &\text{  for } x\le -1 \\ 
                            0 &\text{  for } x\ge 0, \end{cases}
\qquad
f_{r}(x) = \begin{cases} 0 &\text{  for } x\le 0 \\ 
                         1 &\text{  for } x\ge 1 \end{cases}
$$
and consider the vector fields $X_\ell = f_\ell\p_x$ and $X_r=f_r\p_x$ which commute, 
$[X_\ell,X_r]=0$.
A splitting $C^{[M]}$ section $s:\mathbb R^2\to \Diff\cA_2(\mathbb R)$ of $(\on{Shift}_\ell,\on{Shift}_r)$ is given by
$s(a,b)= \on{Fl}^{X_\ell}_a\o \on{Fl}^{X_r}_b$. 
If $M=(M_k)$ is quasianalytic, then any homomorphic section $s$ gives rise to two commuting flows and 
corresponding vector fields as above, so that $f_\ell f_r'-f_\ell' f_r =0$. But then the rational 
function $f_r/f_\ell$ has vanishing derivative, so that the two vector fields are proportional. 
Thus there does not exist a homomorphic section in the quasianalytic case. 
We always find a homomorphic $\CM$ section for $\on{Shift}_r:\Diff\cA_1(\mathbb R)\to \mathbb R$ alone, 
using $0\ne f\ge 0$ in $\SLM(\mathbb R)$  and the vector field 
$\int_{-\infty}^x f(y)\,dy\,\p_x$ which is contained in all spaces $\cA_1(\mathbb R)$. 

That the extended groups $\Diff \cA_i(\R)$ for $\cA \in \{\SLM,\DM\}$ are Lie groups is easily seen using a smooth section, cf.\ 
\cite[15.12]{MichorH}.
And that they are regular is proved in 
\cite[38.6]{KM97}. 
That each group is normal in the largest one is also proved above.
\end{proof}

\subsection{The homogeneous $H^1$ Riemannian metric on $\Diff\cA_1(\mathbb R)$ and its geodesics}
\label{ssec:HS-results}
We choose $\cA\in \{ W^{[M],1}, \SLM, \DM\}$.
We consider the following weak right invariant Riemannian 
metric on $\Diff\cA(\mathbb R)$ and on $\Diff\cA_1(\mathbb R)$; it is called the homogeneous 
$H^1$-metric or the $\dot H^1$-metric. 
$$
G_\ph(X\o \ph, Y\o \ph) = \langle X\o\ph,Y\o\ph \rangle_{\dot H^1} = G_{\on{Id}}(X,Y) = \int_{\mathbb R}X'(x)\,Y'(x)\,dx.
$$
This is well defined since  
$\cA \subseteq W^{[M],2}$, by Proposition \ref{prop:incl}.
This Riemannian metric
has the following property:  

$\bullet$ \cite[Section~4.2]{BBM14b} 
For $\cA \in \{ W^{[M],1},\tensor{\cS}{}_{[L]}^{[M]},\DM\}$ the geodesic equation 
for this weak Riemannian metric on $\Diff\cA_1(\mathbb R)$ is the Hunter--Saxton equation 
\begin{equation*}
\boxed{\quad
\begin{aligned}
u &= (\ph_t)\circ\ph\i, 
\quad
u_{t} = -u u_x +\frac12 \int_{-\infty}^x   (u_x(z))^2 \,dz. 
\\ &\text{Equivalently, }\quad
u_{tx} = -u u_{xx}-\frac12 u^2_x \;.
\end{aligned}
\quad}
\end{equation*}
But 
the covariant derivative and, in particular, the geodesic equation does not exist 
on the closed subgroup $\Diff\cA(\mathbb R)$.

\subsection{The $R$-transform as an isometry onto a flat space} \label{ssec:Rtransform}
\cite[Section~4.3]{BBM14b}  
Let $\cA(\R,\R_{>-2}) = \left\{ f \in \cA(\R) \;:\; f(x) > -2 \right\}$ and consider 
the $R$-mapping given by 
$$ R:\left\{
\begin{aligned}
 \Diff\cA_1(\mathbb R)&\to 
\cA\big(\R,\mathbb R_{>-2}\big)\subset\cA(\R,\R)\\
\ph &\mapsto
2\;\big((\ph')^{1/2}-1\big)\; ,
\end{aligned}\right.
$$
    That $R$ has values in $\cA(\R)$ and is $C^{[M]}$ is seen as follows:
                As in \cite[Section~4.3]{BBM14b} 
    we write $\ph = \Id + f$ with $f \in \cA_1(\R)$ and conclude that $R(\ph) = f' + F(f') f'$, where 
    $F : \R_{>-1} \to \R$ is a real analytic function satisfying $F(0)=0$. 
    The assumption $f'>-1$ and the fact that $f' \in \cA(\R)$ vanishes at $\pm \infty$ imply $-1+\ep \le f'(x) \le C$ for 
    constants $\ep,C>0$ independent of $x$. Thus, 
    we may conclude that $F(f') \in \BM(\R)$, by (the proof of) Theorem \ref{thm:Bcomp}, since for $F$, being real analytic,  
    the required $[M]$-estimates hold on the interval $[-1+\ep,C]$ (thanks to our assumption $C^\om \subseteq \CM$). 
    The statement then follows, since $\cA(\R)$ is a $\BM(\R)$-module.
                That $R$ is $C^{[M]}$ is now a consequence of Theorem \ref{thm:compDiff}.  

The $R$-map is invertible with polynomial inverse
$$R\i :\left\{
\begin{aligned}\cA\big(\R,\mathbb R_{>-2}\big) &\to \Diff\cA_1(\mathbb R)
\\\ga&\mapsto (x\mapsto x+\frac14 \int_{-\infty}^x \big(\ga^2(y)+4\ga(y)\big)\;dy\;).
\end{aligned}\right.$$ 

$\bullet$ 
The pull-back of the flat $L^2$-metric via $R$ is the $\dot H^1$-metric on $\Diff\cA_1(\R)$, i.e.,
$$R^*\langle \cdot,\cdot\rangle_{L^2} = \langle\cdot,\cdot\rangle_{\dot H^1}\; .$$
Thus the space $\big(\Diff\cA_1(\R),\dot H^1\big)$ is a flat space in the sense of Riemannian 
geometry.

Here $\langle \cdot,\cdot\rangle_{L^2}$ denotes the $L^2$-inner product on $\cA(\R)$ interpreted as 
a weak Riemannian metric on $\cA(\R,\R_{>-2})$, that does not depend on the basepoint, i.e. 
\[
G^{L^2}_\ga(h,k) = \langle h,k\rangle_{L^2}=\int_\R h(x)k(x)\;dx\;,
\]
for $h,k \in \cA(\R) \cong T_\ga \cA(\R,\R_{>-2})$.

\subsection{Explicit solutions for the geodesic equation}
\cite[Section~4.4]{BBM14b} 
Given $\ph_0,\ph_1\in \Diff\cA_1(\R)$ the unique geodesic $\ph(t,x)$ connecting them is given by
\begin{align*}
\ph(t,x)=R\i\Big((1-t)R(\ph_0)+tR(\ph_1) \Big)(x)\; \,
\end{align*}
and their geodesic distance is
\begin{align}
d(\ph_0,\ph_1)^2=4\int_{\R} \big((\ph'_1(x))^{1/2}-(\ph'_0(x))^{1/2}\big)^2\;dx\; .
\end{align}
Furthermore the support of the geodesic is localized in the following sense: if $\ph(t,x)= x + 
f(t,x)$ with $f(t) \in \cA_1(\R)$ and similarly for $\ph_0,\ph_1$, then $\on{supp}(\p_x f(t))$ is 
contained in $\on{supp}(\p_x f_0)\cup \on{supp}(\p_x f_1)$.

$\bullet$ 
Thus the Hunter--Saxton equation is $C^{[M]}$-well posed in each space $\cA_1(\mathbb R)$ for each 
$\cA \in \{ W^{[M],1},\tensor{\cS}{}_{[L]}^{[M]},\DM\}$ and all weight sequences $M=(M_k)$ and $L=(L_k)$, $L_k \ge 1$, with 
the restrictions of Hypothesis \ref{ssec:localDC}.

$\bullet$ \cite[Section~4.5]{BBM14b} 
The metric space $\big(\Diff\cA_1(\R),\dot H^1\big)$ is path-connected and geodesically convex but 
not geodesically complete. Each non-trivial geodesic is incomplete.

\subsection{The strange behavior of geodesics on the Lie subgroup $\Diff\cA(\mathbb R)$}
\label{HS.10}
\cite[Sections~4.5, 4.6, and 4.7]{BBM14b} 
For $\cA\in \{ W^{[M],1},\SLM,\DM\}$, the $R$-mapping
is bijective 
$$ R:
\Diff\cA(\mathbb R)\to \Big\{\ga \in \cA(\R,\mathbb R_{>-2}):\int_{\R}\ga(x)\big(\ga(x)+4\big)\; dx = 0 \Big\}
 \subset \cA\big(\R,\mathbb R_{>-2}\big).
$$
The pull-back of the flat $L^2$-metric via $R$ is again the homogeneous Sobolev metric of order one.
The image of the $R$-map is a splitting submanifold 
in the sense of  \cite[Section 27.11]{KM97}. 
The geodesic equation (the Christoffel symbol) does not exist 
on $\Diff\cA(\mathbb R)$. 
The geodesic distance $d^\cA$ on $\Diff\cA(\R)$  coincides with the restriction of $d^{\cA_1}$ to 
$\Diff\cA(\R)$, i.e., for $\ph_0, \ph_1 \in \Diff\cA(\R)$ we have
$d^\cA(\ph_0, \ph_1) = d^{\cA_1}(\ph_0, \ph_1)$.
Every geodesic in $\Diff\cA_1(\R)$ intersects $\Diff\cA(\R)$ at most twice and 
every geodesic is tangent to a right-coset of $\Diff\cA(\R)$ at most once.
For $\ph_0,\ph_1\in\Diff\cA(\R)$ we can give the following formula for the size of the shift
$$
\on{Shift}_r(\ph(t))=\frac{t^2-t}{4}\big\|R(\ph_0)-R(\ph_1)\big\|^2_{L^2} 
= (t^2-t)\big\|\sqrt{\ph_0'}-\sqrt{\ph_1'}\,\big\|^2_{L^2}  \; .$$

\subsection{Continuing geodesics beyond the group and the geodesic completion to a monoid} 
\cite[Section~4.10]{BBM14b} 
Consider a straight line $\ga(t) = \ga_0 + t\ga_1$ in $\cA(\mathbb R,\mathbb R)$. 
Then $\ga(t)\in\cA(\mathbb R,\mathbb R_{>-2})$ precisely for $t$ in an open interval $(t_0,t_1)$ 
which is finite at least on one side; at $t_1<\infty$, say. 
Note that 
$$
\ph(t)(x):=R\i(\ga(t))(x) = x+ \frac14\int_{-\infty}^x \ga^2(t)(u)+4\ga(t)(u)\,du
$$
makes sense for all $t$, and 
that $\ph(t):\mathbb R\to \mathbb R$ is smooth and $\ph(t)'(x)\ge0$ for all $x$ and $t$ so that 
$\ph(t)$ is monotone non-decreasing. Moreover, $\ph(t)$ is proper and surjective, since $\ga(t)$
vanishes at $-\infty$ and $\infty$. Let 
$$
\on{Mon}_{\cA_1}(\mathbb R) := \big\{\on{Id}+f: f\in\cA_1(\mathbb R,\mathbb R), f'\geq-1\big\}
$$
be the monoid (under composition) of all such functions.

For $\ga\in\cA(\mathbb R,\mathbb R)$ let 
$x(\ga):=\min\{x\in \mathbb R\cup\{\infty\}: \ga(x)=-2\}.$
Then for the line $t\mapsto \ga(t)$ from above we see that $x(\ga(t))<\infty$ for all $t>t_1$.
Thus, if the so extended geodesic $\ph$ leaves the diffeomorphism group at $t_1$, it never comes back but 
stays inside $\on{Mon}_{\cA_1}(\mathbb R)$ for the rest of its life. 
In this sense $\on{Mon}_{\cA_1}(\mathbb R)$ is a `\emph{geodesic completion}' of 
$\Diff\cA_1(\mathbb R)$, and $\on{Mon}_{\cA_1}(\mathbb R)\setminus \Diff\cA_1(\mathbb R)$ is the 
`\emph{boundary}'.

\subsection{Remark} The results from \ref{ssec:HS-results} carry over to the periodic case; this was 
spelled out already in \cite[Section~6]{BBM14b}. They 
also carry over to the two-component Hunter--Saxton equation on the real line, namely to 
the semidirect product $\Diff\cA_1(\mathbb R)\ltimes \cA(\mathbb R)$; see 
\cite[Section~5]{BBM14b}.
Note that the Hunter--Saxton equation also admits soliton-like solutions; see 
\cite[Section~4.11]{BBM14b}. These are not $C^\infty$ 
as diffeomorphisms, so there is no Denjoy--Carleman improvement for them.

\section{The strange behavior of composition on \texorpdfstring{$\Diff\SpM_2(\mathbb R)$}{WM2} or 
\texorpdfstring{$\Diff(\SpM\cap L^1)_2(\mathbb R)$}{WM2} for \texorpdfstring{$1<p\le 2$}{1<p<=2}}
\label{sec:strangeSpM}

Assume that $M=(M_k)$ satisfies Hypothesis \ref{ssec:localDC} and that $L=(L_k)$ satisfies $L_k\ge 1$ for all $k$.
In this section we investigate extensions of the $\CM$-regular Lie groups $\Diff W^{[M],p}(\mathbb R)$ and $\Diff\BM(\mathbb R)$
(as well as of the regular Lie groups $\Diff \Sp(\R)$ and $\Diff \cB(\R)$)
which are similar to those from Section \ref{sec:HS}. Surprisingly, these extensions lead to 
{\it half-Lie groups} only, namely $\CM$ (or smooth) manifolds which are topological groups with 
$\CM$ (or smooth) right translations -- but left translations and inversions are only continuous.
Nevertheless, the $R$-transform is $\CM$ (or smooth) and similar results as in Section \ref{sec:HS} 
hold.

\subsection{Extending the function spaces $\SpM$ for $1<p<\infty$ and $\BM$}
\label{SpM.1}
For $\cA\in\{\SpM \text{ for } 1<p<\infty, \BM\}$ we consider the space 
$\cA_2:=\{f\in C^\infty(\mathbb R): f'\in\cA(\mathbb R)\}$ with the structure given by the 
linear isomorphism $f\mapsto (f',f(x_0))\in \cA(\mathbb R)\x \mathbb R$, for any fixed 
$x_0\in \mathbb R$. Since $\cA(\mathbb R)\not\subseteq L^1(\mathbb R)$, functions in 
$\cA_2(\mathbb R)$ are no longer bounded. Thus evaluations at $-\infty$ and at $\infty$ no longer 
make sense. 

Analogously, we consider the cases $\cA\in\{\Sp \text{ for } 1<p<\infty, \cB\}$.
Let
\begin{align*}
  \Diff \SpM_2(\mathbb R) &:= \big\{\on{Id}+ f: f\in \SpM_2(\mathbb R), f'>-1\big\}, \quad\text{ for } 1<p < \infty, \\
  \Diff \Sp_2(\mathbb R) &:= \big\{\on{Id}+ f: f\in \Sp_2(\mathbb R), f'>-1\big\}, \quad\quad\text{ for } 1<p < \infty, \\
  \Diff \BM_2(\mathbb R) &:= \big\{\on{Id}+ f: f\in \BM_2(\mathbb R), \inf_{x \in \R} f'(x)>-1\big\}, \\
  \Diff \cB_2(\mathbb R) &:= \big\{\on{Id}+ f: f\in \cB_2(\mathbb R), \inf_{x \in \R} f'(x)>-1\big\}.
\end{align*}

\begin{theorem}\label{SpM.2}
Then $\Diff \SpM_2(\mathbb R)$
for $1<p<\infty$ and 
$\Diff \BM_2(\mathbb R)$
are only $\CM$ half-Lie groups, and $\Diff \Sp_2(\mathbb R)$
for $1<p<\infty$ and 
$\Diff \cB_2(\mathbb R)$
are only half-Lie groups. 
\end{theorem}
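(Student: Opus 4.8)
The plan is to prove two things about each of the four sets: (i) that it is a $\CM$ (resp.\ smooth) manifold and a topological group whose right translations are $\CM$ (resp.\ smooth), and (ii)—the surprising half—that its left translations and its inversion are \emph{not} $\CM$ (resp.\ not smooth). For (i) I would first record that $\cA_2(\R)\cong\cA(\R)\times\R$ is a convenient vector space and that the defining condition $f'>-1$ is open, so that $f\mapsto\Id+f$ is a global chart. For right translation by $G=\Id+g$ the chart representation is the affine map $R_G(f)=g+f\o(\Id+g)$; its linear part $f\mapsto f\o(\Id+g)$ is bounded from $\cA_2(\R)$ to itself, because $\Id+g\in\cB_2(\R)$ (resp.\ $\BM_2(\R)$) lets one apply Theorem~\ref{thm:Wcomp2} (resp.\ Theorem~\ref{thm:Bcomp}) to $f'$ and then multiply by $1+g'\in\cB(\R)$ (resp.\ $\BM(\R)$) using the module property of Proposition~\ref{prop:incl}, while the base point $f\mapsto f(x_0+g(x_0))$ is a bounded functional. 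Since a bounded linear, hence affine, map between convenient vector spaces is $\CM$ (see \cite[8.3]{KMRu}), right translations are $\CM$. Continuity of composition and of inversion—which makes the manifold a topological group—follows from the $C^0$-refinements of these same composition estimates.

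The heart of the matter is (ii). In the global chart, left translation by $G=\Id+g$ is $\Lambda_G(f)=f+g\o(\Id+f)$, which is genuinely nonlinear in $f$ because $f$ enters the argument of $g$. Differentiating the straight $\CM$-line $t\mapsto th$ at $t=0$ yields the pointwise candidate directional derivative $(1+g')\,h$, whose $x$-derivative equals $(1+g')\,h'+g''h$. The first summand lies in $\cA(\R)$ by the module property, but $g''h$ need not: an element $h$ of the model space $\cA_2(\R)$ is only controlled through $h'\in\cA(\R)$ and is in general unbounded, so the product of the $\cA$-function $g''$ with the non-decaying $h$ can leave $\cA(\R)$. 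Whenever this happens $(1+g')h\notin\cA_2(\R)$, so $t\mapsto\Lambda_G(th)$ is not even differentiable into $\cA_2(\R)$ and $\lambda_G$ fails to be $\CM$.

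It remains to realize this failure by concrete choices of $G$. For $\cA=\BM$ or $\cA=\cB$ this is immediate: take $g$ with $g'=\tfrac12\sin$, which lies in $\BM(\R)$ since $C^\om\subseteq\CM$ and satisfies $\inf_x g'(x)=-\tfrac12>-1$, and take $h(x)=x$ (so $h'=1\in\BM(\R)$); then $g''h=\tfrac12\,x\cos x$ is unbounded, hence $(1+g')h\notin\BM_2(\R)$ (resp.\ $\notin\cB_2(\R)$). For $\cA=\SpM$ or $\cA=\Sp$ with $1<p<\infty$ the argument is more delicate and constitutes the \emph{main obstacle}: now every admissible direction $h\in\cA_2(\R)$ grows only sublinearly while $g',g''$ decay, and for power-type profiles the product $g''h$ stays $p$-integrable, so no naive direction works. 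The failure must instead be extracted from the $[M]$-growth bounds. I would therefore construct $g$ as an antiderivative of a lacunary, Petzsche-type series $g'=\sum_k c_k\,\ch_k(\,\cdot-\mu_k)$ with $\mu_k\to\infty$, exactly as in Example~\ref{example}, so that the $L^p$-mass of the high derivatives $g^{(k)}$ is pushed out to infinity against the polynomial growth of a companion $h$; this makes the weighted quantities $\|g^{(k+2)}h\|_{L^p(\R)}$ outgrow $\rho^k\,k!\,M_k$ for every $\rho$, so that $(1+g')h\notin\SpM_2(\R)$ and $\lambda_G$ is again not differentiable.

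Finally, that inversion is not $\CM$ is a formal consequence: in any group one has $\lambda_G=\on{inv}\o\rho_{G^{-1}}\o\on{inv}$, so if $\on{inv}$ were $\CM$ then, right translations being $\CM$ by (i), $\lambda_G$ would be $\CM$ as well, contradicting (ii). Combined with the continuity of $\on{inv}$ and of composition from (i), this is exactly the structure of a $\CM$ (resp.\ smooth) \emph{half-Lie group}.
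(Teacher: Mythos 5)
Your overall architecture matches the paper's: the statement is reduced to (a) right translation being $\CM$ (resp.\ smooth) --- the paper's Theorem~\ref{strangeSpM.3}, which you shortcut by observing that right translation is affine in the global chart with bounded linear part, a legitimate simplification --- and (b) left translation failing to be Gateaux differentiable --- the paper's Proposition~\ref{strangeSpM.1} --- with inversion then excluded via $\la_G=\on{inv}\o\rho_{G^{-1}}\o\on{inv}$. Your counterexample for $\Diff\cB_2(\R)$ and $\Diff\BM_2(\R)$ (take $g'=\tfrac12\sin$ and the direction $h(x)=x$, so that $g''h=\tfrac12\,x\cos x$ is unbounded and the candidate derivative $(1+g')h$ leaves $\cA_2(\R)$) is correct and in fact simpler than the paper's.

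The genuine gap is in the cases $\Diff\Sp_2(\R)$ and $\Diff\SpM_2(\R)$ for $1<p<\infty$. You assert that ``no naive direction works'' and that the failure ``must instead be extracted from the $[M]$-growth bounds,'' proposing to make $\|g^{(k+2)}h\|_{L^p}$ outgrow $\rh^k\,k!\,M_k$ for every $\rh$. This cannot prove the claim for $\Diff\Sp_2(\R)$: membership in $\Sp_2(\R)$ imposes no growth condition on the sequence of $L^p$-norms of the derivatives, only their finiteness, so violating weighted bounds by high-order derivatives is no obstruction in the smooth category --- yet the theorem asserts that $\Diff\Sp_2(\R)$ is only a half-Lie group. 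Moreover your premise is false: a first-order $L^p$ failure of $g''h$ is achievable and is exactly what the paper uses. With $\ph_a(x)=\sum_{n\ge1}a_n\ch(x-2n)$ for a fixed bump $\ch\ge0$ and $\th_a(x)=\int_0^x\ph_a$, take the direction $h=\th_{(1/n)}$, which grows like $\log x$, and $g=\th_b$ with $b_n=1/k$ for $n=\lceil e^k\rceil$ and $b_n=0$ otherwise; then $(b_n)\in\ell^p$, so $g\in\Sp_2(\R)$ (resp.\ $\SpM_2(\R)$ for non-quasianalytic $M$ and a $\CM$ bump), while $\|g''h\|_{L^p}^p\gtrsim\sum_n\bigl(b_n\log n\bigr)^p=\sum_k1=\infty$. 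Hence already the first derivative of $(1+g')h$ fails to lie in $\Sp(\R)$, and the same construction settles both the smooth and the $[M]$ cases at once. Your route, even if it could be completed for $\SpM_2$, would additionally have to rule out cancellation among the other Leibniz terms of $\bigl[(1+g')h\bigr]^{(k+1)}$, which you do not address.
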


\begin{proof}
This will follow from Proposition \ref{strangeSpM.1} and Theorem 
\ref{strangeSpM.3}. 
\end{proof}

\subsection{The homogeneous $\dot H^1$ metric on $\mathbb R\backslash\Diff\SpM_2(\mathbb R)$ for $1<p\le 2$}\label{SpM.4a}
For $h,k\in \SpM_2(\mathbb R)$, considered as elements of the Lie algebra, we consider the 
right invariant symmetric positive semi-definite bilinear form 
$$
G_\ph(h,k) = G_{\on{Id}}(h\o \ph\i,k\o\ph\i) = \int_{\mathbb R} (h\o\ph\i)'.(k\o\ph\i)'\,dx = 
\int\frac{h'.k'}{\ph'}\,dx\;.
$$
It is visibly $\CM$. 
Its kernel is exactly the tangent bundle of all left translation cosets
$\{\ps+\al:\al\in \mathbb R\}$, so it induces a weak Riemannian metric on the 
homogeneous space $\mathbb R\backslash\Diff\SpM_2(\mathbb R)$ which is a $\CM$ manifold 
diffeomorphic to $\{f\in \SpM_2(\mathbb R): f'>-1, f(0)=0\}$. Elements of $\Diff\SpM_2(\mathbb R)$ 
still act as $\CM$ diffeomorphisms from the right on this homogeneous space.
The corresponding geodesic equation is the Hunter--Saxton equation in the form
\begin{equation*}
\boxed{\quad
\ph_{tt} = \frac12\int_{x_0}^x\frac{\ph_{tx}^2}{\ph_x}\,dz \quad\text{  for any }x_0\in\mathbb R.
\quad}
\end{equation*}
This follows from the variation with fixed ends
\begin{align*}
\p_s E(\ph) &=\p_s\frac12 \int_0^1G_\ph(\ph_t,\ph_t)\,dt
=\p_s\frac12 \int_0^1\int_{\mathbb R}\frac{\ph_{tx}^2}{\ph_x}\,dx\,dt
\\
&= 
\int_0^1\int_{\mathbb R}\Big(-\ph_{tt}+\frac12\int_{x_0}^x\frac{\ph_{tx}^2}{\ph_x}\,dz\Big)_x.\frac{\ph_{sx}}{\ph_x}\,dx\,dt\;.
\end{align*}
If the  composition mapping were differentiable, passing to the right logarithmic derivative  
$\ph\mapsto u = (\ph_t)\circ\ph\i$ would equivalently render this geodesic equation to the Lie algebra valued 
equation 
$$
u_{t} = -u u_x +\frac12 \int_{x_0}^x   (u_x(z))^2 \,dz\quad\text{  for any }x_0\in\mathbb R,
$$ 
which in turn is equivalent to the usual form
$u_{tx} = -u u_{xx}-\frac12 u^2_x$.
But note that $u.u_x$ is not in $\SpM_2(\mathbb R)$, in general.

\subsection{The $R$-transform: $\mathbb R\backslash \Diff\SpM_2(\mathbb R) \to \SpM(\mathbb R)$}
\label{SpM.4}
For $1<p\le 2$ we consider the $R$-transform
$$ R:\left\{
\begin{aligned}
 \mathbb R\backslash\Diff\SpM_2(\mathbb R)&\to 
\SpM\big(\R,\mathbb R_{>-2}\big)\subset\SpM(\R,\R)\\
\ph &\mapsto
2\;\big((\ph')^{1/2}-1\big)\; ,
\end{aligned}\right.
$$
which is defined on the space of left cosets $\{\ps+\al:\al\in \mathbb R\}$. 
For any fixed $x_0\in \mathbb R$ 
its inverse is given by
$$R\i :\left\{
\begin{aligned}\SpM\big(\R,\mathbb R_{>-2}\big) &\to \mathbb R\backslash\Diff\SpM_2(\mathbb R)
\\\ga&\mapsto (x\mapsto x+\frac14 \int_{x_0}^x \big(\ga^2(y)+4\ga(y)\big)\;dy\;),
\end{aligned}\right.$$
modulo translations acting from the left. 

Again, the $R$-transform is an isometry between the weak Riemannian manifold 
$\mathbb R\backslash\Diff\SpM_2(\mathbb R)$ with the homogeneous $\dot H^1$-metric, and the flat 
open subset $\SpM(\mathbb R,\mathbb R_{>-2})$ in the pre-Hilbert space $\SpM(\mathbb R,\mathbb R)$ 
with the constant $L^2$ inner product. All results of Section \ref{sec:HS} continue to hold with 
the appropriate changes; but not the formula for the shift in \ref{HS.10}.

\subsection{The case $\cA=\SpM\cap L^1$ for $1<p<\infty$ also leads to a $\CM$ half-Lie group}
\label{ssec:SpMnegative}
The construction in \ref{SpM.1} leads to a half-Lie group because the antiderivatives of elements in the spaces 
$\cA_2(\mathbb R)$ are unbounded. But even if we force them to be bounded, we only get half-Lie 
groups as we show now.
In the case $p>1$ we have $\SpM(\mathbb R)\not\subseteq L^1(\mathbb R)$, in general. 
So we might consider the space 
$(\SpM\cap L^1)_2(\mathbb R)
:=\big\{f\in C^\infty(\mathbb R): f'\in \SpM(\mathbb R)\cap L^1(\mathbb R)\big\}$ 
of bounded antiderivatives of functions in $\SpM(\mathbb R)\cap L^1$; we use the convenient vector space 
structure induced by 
the embedding $f\mapsto (f',f',\ev_{-\oo})$
into $\SpM(\mathbb R)\x L^1(\mathbb R)\x \RR$. 
But the corresponding group $\Diff (W^{[M],p}\cap L^1)_2(\R)$ is only a $\CM$ half-Lie group if $p>1$.
Even the group $\Diff (W^{\infty,p}\cap L^1)_2(\R)$, 
where 
$(\Sp\cap L^1)_2(\mathbb R)
:=\big\{f\in C^\infty(\mathbb R): f'\in \Sp(\mathbb R)\cap L^1(\mathbb R)\big\}$, 
is only a half-Lie group if $p>1$.
See Proposition \ref{strangeSpM.1} and Theorem \ref{strangeSpM.3} below.

\begin{proposition}\label{strangeSpM.1}
Left translation is not even Gateaux differentiable on $\Diff (W^{\infty,p}\cap L^1)_2(\R)$,
$\Diff \Sp_2(\R)$, and $\Diff \cB_2(\R)$,
or on $\Diff (W^{[M],p}\cap L^1)_2(\R)$, $\Diff \SpM_2(\R)$, and $\Diff \BM_2(\R)$,  
for non-quasianalytic $M=(M_k)$ 
if $p>1$.
In fact, the derivative takes values in the larger space with weaker structure
$$
\SpM_2=\{f\in C^\infty(\mathbb R): f'\in \SpM(\mathbb R)\} \cong \SpM(\mathbb R)\x \mathbb R.
$$
\end{proposition}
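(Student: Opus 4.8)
The plan is to write left translation in the global chart, compute its one-sided Gateaux derivative explicitly, and then show that for a suitable choice of the fixed translating diffeomorphism and of the tangent direction this derivative cannot converge in the relevant model space. Writing $\psi=\Id+g$ for the element by which we translate and $\ph=\Id+f$ for the variable element, left translation reads, in the chart $f\mapsto\Id+f$,
\[
  L_\psi(\Id+f)=\psi\o\ph=\Id+\big(f+g\o\ph\big),
\]
so its directional derivative in a direction $h$ is the pointwise limit
\[
  DL_\psi(\Id+f)(h)=h+(g'\o\ph)\,h,
\]
obtained by differentiating $t\mapsto g(\ph(x)+t\,h(x))$. The decisive quantity is the derivative of the second summand,
\[
  \big((g'\o\ph)\,h\big)'=(g'\o\ph)\,h'+(g''\o\ph)\,\ph'\,h,
\]
whose first term is always admissible, since $1+g'\o\ph\in\BM$ is bounded with bounded derivatives and each of the spaces $\cA\in\{\SpM,\ \SpM\cap L^1,\ \BM\}$ is a $\BM$-module by Proposition~\ref{prop:incl}. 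Everything therefore hinges on the term $(g''\o\ph)\,\ph'\,h$.

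For the groups without the $L^1$-constraint I would exploit that an antiderivative $h$ of a function in $\cA$ need not be bounded: when $p>1$ one has $\SpM\not\subseteq L^1$, so $h$ may grow (at a rate just below $|x|^{1-1/p}$), while in the $\BM$-case $h$ may grow linearly. Evaluating at $f=0$, so that $\ph=\Id$ and $\ph'=1$, the obstruction reduces to $g''\,h$. For $\Diff\BM_2(\R)$ and $\Diff\cB_2(\R)$ I would pick $g$ with $g''$ bounded and non-decaying and $h$ growing linearly, so that $g''\,h$ is unbounded and hence not in $\BM$ (resp.\ $\cB$); no restriction on $p$ is needed here. For $\Diff\SpM_2(\R)$ and $\Diff\Sp_2(\R)$ with $p>1$ I would realise $g''$ and $h'$ as lacunary sums of translated bumps,
\[
  g''=\sum_n\ep_n\,B(\cdot-x_n),\qquad h'=\sum_n\de_n\,B(\cdot-x_n),
\]
with a fixed $0\neq B\in\DM(\R)$ (which exists because $M=(M_k)$ is non-quasianalytic) and well-separated centres $x_n\to\infty$. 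Choosing $\sum_n\de_n^p<\infty$ but $\sum_n\de_n=\infty$ (possible exactly because $p>1$) makes $h$ unbounded while $h'\in\SpM$, and then choosing $\ep_n$ with $\sum_n\ep_n^p<\infty$ but $\sum_n\big(\ep_n\sum_{m<n}\de_m\big)^p=\infty$ forces $g''\,h\notin L^p\supseteq\SpM$ while $g''\in\SpM$. The Denjoy--Carleman estimates for $g$ and $h$ follow from those of the single bump $B$ together with the summability $\sum_n\ep_n^p,\sum_n\de_n^p<\infty$, and after rescaling $g$ by a small constant we may assume $g'>-1$, so that $\psi$ is a genuine element of the group. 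In each of these cases the candidate derivative then fails to lie in the model space.

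To convert ``candidate not in the model space'' into the failure of Gateaux differentiability I would argue that if the difference quotients converged in the model space, then --- since each of these spaces embeds continuously into $C^\infty(\R)$, so that evaluation of every derivative at every point is continuous --- the limit would have to be the pointwise limit $h+(g'\o\ph)h$, which is not in the model space; this contradiction shows the difference quotients cannot converge there. The formally computed derivative is in all cases most naturally read as a map into the weaker space $\SpM_2\cong\SpM\x\R$, which is the refined assertion at the end of the statement.

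The genuinely hard part, which I expect to be the main obstacle, is the pair of spaces built on $L^1$, namely $\Diff(W^{\infty,p}\cap L^1)_2(\R)$ and $\Diff(W^{[M],p}\cap L^1)_2(\R)$: here the antiderivatives are \emph{bounded}, so the mechanism above is unavailable and the candidate derivative can no longer be forced out of the model by mere growth of $h$. In this case one must instead analyse the first-order remainder of $g\o(\Id+th)$ and show --- again using that $g$ is only $\SpM$-regular and that $p>1$, and again via lacunary sums of non-quasianalytic bumps --- that this remainder cannot be made small in the strong topology of the model, so that the difference quotients, though convergent pointwise together with all derivatives, do not converge in the model space. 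The elementary summation estimate $\sum_n\big(\ep_n\sum_{m<n}\de_m\big)^p=\infty$ with $\sum_n\ep_n^p,\sum_n\de_n^p<\infty$, which is solvable precisely when $p>1$, is exactly what pins down the role of the hypothesis $p>1$ throughout.
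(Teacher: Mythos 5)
Your chart formula for left translation, the candidate Gateaux derivative $h+(g'\o\ph)h$, the reduction of the obstruction to the term $(g''\o\ph)\ph' h$, and the ``pointwise limit forces the limit'' argument are all correct, and your lacunary-bump construction for $\Diff\SpM_2(\R)$, $\Diff\Sp_2(\R)$, $\Diff\BM_2(\R)$, $\Diff\cB_2(\R)$ is essentially the construction in the paper (the paper's $f=\th_b$, $h=\th_{(n^{-1})}$ with $b_n=k^{-1}$ at $n=\lceil e^k\rceil$ realizes exactly your summability conditions $\sum\ep_n^p<\infty$, $\sum(\ep_n\sum_{m<n}\de_m)^p=\infty$).

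There is, however, a genuine gap in your treatment of the $L^1$-constrained groups $\Diff(W^{\infty,p}\cap L^1)_2(\R)$ and $\Diff(W^{[M],p}\cap L^1)_2(\R)$. You declare that because the directions $h$ are now bounded ``the mechanism above is unavailable'' and propose instead an unspecified analysis of the first-order remainder; that fallback is not carried out and is in fact unnecessary, because the same mechanism does work --- you have only looked at the wrong defining condition of the model space. Membership of a function $w$ in $(W^{\infty,p}\cap L^1)_2$ requires $w'\in L^1(\R)$, not merely $w'\in L^p(\R)$, and for $p>1$ the hypothesis $g'\in W^{\infty,p}\cap L^1$ does \emph{not} force $g''\in L^1$: this is exactly Lemma~\ref{strangeSpM.2}, which produces $\ph\in W^{\infty,p}(\R)\cap L^1(\R)$ with $\ph'\notin L^1(\R)$ (the sum $\sum_n\frac{1}{n}\ch(\log(n)(t-2n))$, whose $k$-th derivative has $L^p$-norm controlled by $\sum_n\log(n)^{kp-1}n^{-p}$, finite for $p>1$ but infinite for $k=p=1$). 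Taking $g=\int_0^\cdot\ph$ and a bounded direction $h$ with $h(\pm\infty)=C_\pm\neq 0$, the obstruction term $g''h=\ph' h$ fails to be $1$-integrable, so the candidate derivative leaves the model space at the level of the $L^1$-condition on the first derivative --- no growth of $h$ and no remainder estimate is needed, and this is precisely where $p>1$ enters for these two groups. Your proposal as written does not prove the statement for these two groups.
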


\begin{proof}
  Let $t \mapsto \Id + f(t,~)$ and $t \mapsto \Id +g(t,~)$ be $C^\infty$-curves in 
  $\Diff (W^{\infty,p} \cap L^1)_2(\R)$. For $f$ this means
  precisely that $f\in C^\infty(\R^2)$ such that $\p_x f>-1$ and, 
  for all $k \in \N$, $\al\in \N_{>0}$, and $K \in \sK(\R)$,
  \begin{align}\label{eq:counterp1}
  \begin{split}
    &\sup_{t \in K} \|\p_t^k \p_x^\al f(t,~)\|_{L^p(\R)} <\infty,   \\
    &\sup_{t \in K} \|\p_t^k \p_x f(t,~)\|_{L^1(\R)} <\infty, \\
    &\sup_{t \in K} |\p_t^k  f(t,-\infty)| <\infty,
  \end{split}
  \end{align} 
  and likewise for $g$. In view of (\ref{prop:Bgroup}.\ref{eq:group1}) we must investigate $h(t,x) := f(t,x+g(t,x))$. 
  We shall find functions $f$ and $g$ satisfying \eqref{eq:counterp1} such that 
  \begin{align} \label{eq:counterp2}
  \begin{split}
    \p_t \p_x h(t,x) &= (\p_x^2f)(t,x+g(t,x)) \p_t g(t,x) 
    \\
    &\quad+ (\p_x^2f)(t,x+g(t,x)) \p_t g(t,x) \p_x g(t,x) \\
    &\quad+ 
    (\p_t \p_x f)(t,x+g(t,x)) 
    \\
    &\quad+ (\p_t \p_x f)(t,x+g(t,x))\p_x g(t,x) 
    \\
    &\quad+ (\p_x f)(t,x+g(t,x)) (\p_t \p_x g)(t,x).
  \end{split}
  \end{align}
  is not $1$-integrable in $x$ for any $t$; this will imply the result. 
  Let us define 
  \[
    f(t,x) := \int_0^x \ph(y)\, dy \quad \text{ and }\quad g(t,x) := t \int_0^x \ps(y)\, dy
  \]
  where $\ph$ is the function from Lemma \ref{strangeSpM.2} below and $\ps$ is any function in $W^{\infty,p}(\R)\cap L^1(\R)$ with $\ps>-1$
  and 
  $\int_0^x \ps(y)\, dy \to C_\pm \ne 0$ as $x \to \pm\infty$.
  Then $f$ and $g$ satisfy \eqref{eq:counterp1}. 
  However, the first term on the right-hand side of \eqref{eq:counterp2}, which equals
  \[
    \ph'(x+g(t,x)) \int_0^x \ps(y)\, dy,
  \]
  is not $1$-integrable in $x$ for any $t$, by the properties of $\ph$ and $\ps$. 
  All other terms on the right-hand side of \eqref{eq:counterp2} are $1$-integrable 
  and tend to $0$ as $x \to \pm \infty$, since $f$ and $g$ satisfy \eqref{eq:counterp1}. 
  It follows that $\p_t \p_x h(t,x)$ is not $1$-integrable in $x$ for any $t$.

  By allowing $p=\infty$ we may treat $\Diff \Sp_2(\mathbb R)$ and $\Diff \cB_2(\mathbb R)$ simultaneously.
  That $t \mapsto \Id +g(t,~)$ is a $C^\infty$-curve  in $\Diff \Sp_2(\mathbb R)$ means that 
  $g \in C^\infty(\R^2)$ such that $\inf_{x \in \R} \p_x g(t,x)>-1$ and, 
  for some $x_0 \in \R$ and all $k \in \N$, $\al\in \N_{>0}$, and $K \in \sK(\R)$,
  \begin{align}\label{eq:counterpu1}
  \begin{split}
    &\sup_{t \in K} \|\p_t^k \p_x^\al g(t,~)\|_{L^p(\R)} <\infty,   \\
    &\sup_{t \in K} |\p_t^k  g(t,x_0)| <\infty.
  \end{split}
  \end{align} 
  Let $\ch \ne 0$ be any nonnegative $C^\infty$-function on $\R$ with support in $[-1,1]$, and 
  set $\ph_a(x) := \sum_{n=1}^\infty a_n \ch(x-2n)$, where $a_n >0$ and $(a_n) \in \ell^p$. Then $\ph_a \in \Sp(\R)$ and 
  $\th_a(x) := \int_0^x \ph_a(y)\, dy \in \Sp_2(\R)$ is increasing and satisfies 
  \[
    \th_a(2n+1) = \int_\R \ch(y)\, dy \sum_{k=1}^n a_k.
  \]
  Define $g(t,x) := t \th_{(n^{-1})}(x)$, $f(x) := \th_b(x)$, and $h(t,x):=f(x+g(t,x))$, where 
  \[
    b_n := 
    \begin{cases}
      k^{-1} & \text{ if } n = \lceil e^k \rceil\\
      0 & \text{ otherwise }
    \end{cases}.
  \]
  Then 
  \[
    h_{tx}(0,x) = \ph'_b(x) \th_{(n^{-1})}(x)  +  \ph_b(x) \ph_{(n^{-1})}(x).  
  \]
  For $p<\infty$ the first term on the right-hand side is not in $L^p$, since
  \begin{align*}
    \int_{\R} |\ph'_b(x) \th_{(n^{-1})}(x)|^p\; dx  
    &= \sum_{n=1}^\infty \int_{2n-1}^{2n+1} b_n^p |\ch'(x-2n)|^p (\th_{(n^{-1})}(x))^p \; dx \\
    &\ge  \|\ch'\|_{L^p(\R)}^p \sum_{n=1}^\infty (b_n \th_{(n^{-1})}(2n-1))^p    
  \end{align*}
  and since $\sum_{k=1}^n k^{-1} \ge \log (n+1)$ and hence 
  \[
    b_n \th_{(n^{-1})}(2n-1) \ge \|\ch\|_{L^1(\R)} b_n \log (n) \ge 
    \|\ch\|_{L^1(\R)} \cdot 
    \begin{cases}
      1 & \text{ if } n = \lceil e^k \rceil\\
      0 & \text{ otherwise }
    \end{cases}. 
  \]
  As the second term clearly is in $L^p$, we conclude that $x \mapsto h_{tx}(0,x)$ is not in $L^p$. 
  For $p=\infty$ choose $b_n= (1+\sqrt{\log(n)})^{-1}$.

  This proof can be adapted to $\Diff (W^{[M],p}\cap L^1)_2(\R)$, $\Diff \SpM_2(\R)$, and $\Diff \BM_2(\R)$
  for non-quasianalytic $M=(M_k)$, by choosing a 
  nonnegative $\CM$-function $\ch$ with support in $[-1,1]$ (above and in the proof of Lemma \ref{strangeSpM.2}). 
\end{proof}

\begin{lemma}\label{strangeSpM.2}
There exists a function $\ph \in W^{\infty,p}(\mathbb R)\cap L^1(\mathbb R)$ 
with $\ph'\notin L^1(\mathbb R)$ and $\ph\ge 0$, for each $p>1$.
\end{lemma}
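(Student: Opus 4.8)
The plan is to build $\ph$ as a superposition of disjointly supported rescaled bumps. Fix a nonzero $\ch\in C^\infty(\R)$ with $\ch\ge 0$ and $\on{supp}\ch\subseteq[-1,1]$, pick heights $a_n>0$, widths $w_n\in(0,1]$, and centers $\xi_n$ spaced apart by more than $2$ (so the supports below are pairwise disjoint), and set
\[
  \ph(x):=\sum_n a_n\,\ch\Big(\frac{x-\xi_n}{w_n}\Big).
\]
Near any point at most one summand is nonzero, so $\ph$ is a well-defined nonnegative $C^\infty$ function and, by disjointness, all the norms split as sums over $n$. A change of variables $y=(x-\xi_n)/w_n$ in each summand gives, for every $k$,
\begin{align*}
  \|\ph^{(k)}\|_{L^p}^p &= \|\ch^{(k)}\|_{L^p}^p \sum_n a_n^p\, w_n^{1-kp}, \\
  \|\ph'\|_{L^1} &= \|\ch'\|_{L^1} \sum_n a_n, \\
  \|\ph\|_{L^1} &= \|\ch\|_{L^1}\sum_n a_n w_n.
\end{align*}
The heuristic behind the choice of $a_n,w_n$ is that a single bump carries $L^1$-mass $\sim a_n w_n$ but its derivative carries $L^1$-mass $\sim a_n$ independently of the width; thus requiring $\ph\in L^1$ but $\ph'\notin L^1$ forces $\sum a_n w_n<\infty$ together with $\sum a_n=\infty$, which is possible only if $w_n\to 0$.

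Second, I would make the explicit choice $a_n:=1/n$ and $w_n:=(\log n)^{-2}$ for $n\ge 2$. Then $\sum_n a_n=\sum 1/n=\infty$, so $\ph'\notin L^1$; and $\sum_n a_n w_n=\sum_{n\ge2}\frac{1}{n(\log n)^2}<\infty$, so $\ph\in L^1$. Finally, for each fixed $k$,
\[
  \sum_n a_n^p\, w_n^{1-kp} = \sum_{n\ge 2} n^{-p}\,(\log n)^{2(kp-1)} < \infty,
\]
because $p>1$ makes $n^{-p}$ dominate any power of $\log n$. Taking $k=0$ yields $\ph\in L^p$, and letting $k$ range over all of $\N$ yields $\ph^{(k)}\in L^p$ for every $k$, i.e.\ $\ph\in W^{\infty,p}(\R)$. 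Combined with $\ph\in L^1$, $\ph\ge 0$, and $\ph'\notin L^1$, this is exactly the assertion.

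The crux, and the only delicate point, is the \emph{simultaneous} control of all derivatives. Shrinking the widths is forced by the discrepancy between $\|\ph\|_{L^1}$ and $\|\ph'\|_{L^1}$, but shrinking them too fast is fatal: a polynomial width $w_n=n^{-\ep}$ makes $\sum_n a_n^p w_n^{1-kp}$ diverge once $k$ is large, and an exponential width $w_n=2^{-n}$ already fails at $k=1$ for $p>1$. The logarithmic width $w_n=(\log n)^{-2}$ is slow enough that $w_n^{1-kp}$ contributes only a power of $\log n$ for each fixed $k$, which is absorbed by $n^{-p}$ precisely because $p>1$; this is also exactly where the hypothesis $p>1$ is indispensable, since for $p=1$ the assertion $\ph'\notin L^1$ would contradict $\ph\in W^{1,1}(\R)$. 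The remaining ingredients — smoothness of the locally finite sum and the disjointness of supports used to split the norms — are then immediate.
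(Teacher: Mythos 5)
Your proof is correct and follows essentially the same route as the paper: a locally finite sum of disjointly supported, rescaled nonnegative bumps of height $a_n=1/n$ with logarithmically shrinking widths, with all the $L^p$-norms computed by splitting over the bumps and changing variables. The one substantive difference is your choice of width $w_n=(\log n)^{-2}$, where the paper takes $w_n=(\log n)^{-1}$, i.e.\ $\ph(t)=\sum_n\tfrac1n\,\ch\big(\log(n)(t-2n)\big)$. This difference works in your favour: with the paper's width one gets $\|\ph\|_{L^1}=\|\ch\|_{L^1}\sum_n\tfrac{1}{n\log n}=\infty$, so the paper's function is in fact \emph{not} in $L^1(\R)$ (the claim there that $\sum_n(\log n)^{kp-1}n^{-p}$ is finite for $k=0$ and all $p\ge 1$ fails at $p=1$). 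Your squared logarithm makes $\sum_n\tfrac{1}{n(\log n)^2}$ converge, while for each fixed $k$ the extra powers of $\log n$ in $\sum_n n^{-p}(\log n)^{2(kp-1)}$ are still absorbed by $n^{-p}$ precisely because $p>1$; and $\sum_n a_n=\sum_n 1/n=\infty$ still forces $\ph'\notin L^1$. Your discussion of why the widths must shrink, but only logarithmically, and of where the hypothesis $p>1$ enters, is accurate.
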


\begin{proof}
Let $0\ne \ch\in C^\oo(\RR)$ be nonnegative with $\on{supp}(\ch)\subs[-1,1]$
and define $\ph:t\mapsto \sum_{n=1}^\oo \frac1n \ch\big(\log(n)(t-2n)\big)$.
Then $\ph \in C^\oo(\R)$ as locally finite sum
with
\[
        \ph^{(k)}(t)=\sum_n \frac{\log(n)^k}n \ch^{(k)}\big(\log(n)(t-2n)\big),
\]
and hence 
\begin{align*}
        \|\ph^{(k)}\|_{L^p(\R)}^p
        &=\sum_n \Bigl(\frac{\log(n)^k}n\Bigr)^p \int_\RR |\ch^{(k)}\big(\log(n)(t-2n)\big)|^p\,dt \\
        &=\sum_n \Bigl(\frac{\log(n)^k}n\Bigr)^p \frac1{\log(n)}\,\|\ch^{(k)}\|_{L^p(\R)}^p \\
        &=\|\ch^{(k)}\|_{L^p(\R)}^p\sum_n \frac{\log(n)^{kp-1}}{n^p}
\end{align*}
which is finite for $k=0$ and $p\geq 1$, is infinite for $k=1$ and $p=1$,
and is finite for $k\geq 1$ and $p>1$.
\end{proof}

\begin{theorem}\label{strangeSpM.3}
  Right translation is $C^\infty$ on $\Diff (W^{\infty,p}\cap L^1)_2(\R)$, $\Diff \Sp_2(\R)$, and $\Diff \cB_2(\R)$ and $\CM$ on  
  $\Diff (W^{[M],p}\cap L^1)_2(\R)$, $\Diff \SpM_2(\R)$, and $\Diff \BM_2(\R)$ for all $p\ge 1$; 
  so these two groups are only half-Lie groups.
\end{theorem}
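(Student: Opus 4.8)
The plan is to show that, for each fixed $G=\Id+g$ in the group, the right translation $R_G\colon F\mapsto F\o G$ is $\CM$ (respectively $C^\infty$). The decisive structural point is that in the global chart $\Id+f\leftrightarrow f$ right translation becomes the \emph{affine} map
\[
  f\longmapsto g+f\o(\Id+g),
\]
whose linear part $L_g\colon f\mapsto f\o(\Id+g)$ is linear precisely because right composition by the \emph{fixed} diffeomorphism $\Id+g$ is linear in $f$. This is exactly the feature absent for left translation $f\mapsto f+g\o(\Id+f)$, which is genuinely nonlinear in $f$ and, by Proposition \ref{strangeSpM.1}, not even Gateaux differentiable. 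Since a linear map between convenient vector spaces is $\CM$, respectively $C^\infty$, if and only if it is bounded, see \cite[8.3]{KMRu}, and the affine map above is the restriction to the chart domain of a globally defined affine map, it suffices to prove that $L_g$ is a bounded linear operator on the model space $\cA_2(\R)$.

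First I would pass to the derivative level via the linear isomorphisms of \ref{SpM.1} and \ref{ssec:SpMnegative}, which identify $\cA_2(\R)$ with $\cA(\R)\x\R$ in the cases $\SpM_2$, $\BM_2$, $\Sp_2$, $\cB_2$, and with $\SpM(\R)\x L^1(\R)\x\R$ (respectively $\Sp(\R)\x L^1(\R)\x\R$) in the $L^1$-constrained cases. Under these identifications
\[
  \big(f\o(\Id+g)\big)'=\big(f'\o(\Id+g)\big)\,(1+g'),
\]
so on the $\cA$-factor $L_g$ acts as $\sigma\mapsto(\sigma\o(\Id+g))\,(1+g')$, while on the finite-dimensional factor it acts by the point or boundary evaluation $f\mapsto f(x_0+g(x_0))=f(x_0)+\int_{x_0}^{x_0+g(x_0)}f'(s)\,ds$, which is manifestly bounded linear in the chart data. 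Thus everything reduces to the boundedness of the operator $\sigma\mapsto(\sigma\o(\Id+g))\,(1+g')$ on $\cA(\R)$.

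This boundedness I would assemble from results already proved. Since $g\in\cA_2(\R)$ we have $g'\in\cA(\R)\subseteq\BM(\R)$ by Proposition \ref{prop:incl}, so $G=\Id+g$ is a fixed diffeomorphism with increment in $\BM_2(\R,\R)$ and with $\inf_x(1+g'(x))>0$ (because $g'$ decays at $\pm\infty$ for the $\SpM$-type spaces, and by the defining condition $\inf_x g'>-1$ for the $\BM$-type spaces). Hence $\sigma\mapsto\sigma\o G$ maps $\cA(\R)$ boundedly into itself, by the estimates in the proofs of Theorem \ref{thm:Wcomp2} (for $\SpM$ and $\Sp$) and Theorem \ref{thm:Bcomp} (for $\BM$ and $\cB$), the constants being fixed once $G$ is fixed; and multiplication by $1+g'\in\BM(\R)$ is bounded on $\cA(\R)$ because $\cA(\R)$ is a $\BM(\R)$-module, again by Proposition \ref{prop:incl}. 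For the $L^1$-constrained spaces there is one further check, and it is the only genuinely delicate point: one needs $(\sigma\o G)(1+g')\in L^1$. Here the multiplier $1+g'=G'$ is \emph{exactly} the Jacobian of the substitution $y=G(x)$, so for the orientation-preserving diffeomorphism $G$ the change of variables yields the isometric identity
\[
  \int_\R\big|\sigma(G(x))\big|\,(1+g'(x))\,dx=\int_\R|\sigma(y)|\,dy,
\]
so $L^1$ is preserved. It is precisely this cancellation of the Jacobian that is missing in Proposition \ref{strangeSpM.1}, where differentiating the \emph{left} translation produces an uncompensated second-derivative term that destroys integrability.

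Combining these observations shows that $L_g$ is bounded, hence $R_G$ is $\CM$ (respectively $C^\infty$); the $C^\infty$ cases $\Sp_2$, $\cB_2$, $(\Sp\cap L^1)_2$ are covered by the same argument using Theorem \ref{thm:MichorMumford} and the classical composition statements. Together with Proposition \ref{strangeSpM.1} and the continuity of composition this yields the half-Lie group structure asserted in Theorem \ref{SpM.2}. I expect the main obstacle to be not conceptual but organizational: carrying the composition-plus-module estimate uniformly across the several function spaces, and, in the $L^1$-constrained case, recognizing that the factor $1+g'$ produced by differentiation is exactly the Jacobian, which turns the $L^1$-bound into an exact change of variables rather than an estimate that could fail.
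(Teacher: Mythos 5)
Your proof is correct, but it takes a genuinely different route from the paper's. The paper proves the theorem by the plot criterion: it takes a $\CM_b$-plot $u\mapsto \Id+f(u,\cdot)$, forms $h(u,x)=f(u,x+g(x))$, and verifies directly that $h$ again satisfies the plot conditions (\ref{strangeSpM.1}.\ref{eq:counterp1}) resp.\ (\ref{strangeSpM.3}.\ref{eq:counterp3}) — the order-$\ge 1$ estimates as in Theorem \ref{thm:compDiff}, the $L^1$ condition via the identity $\p_t^k\p_x h=(\p_t^k\p_x f)(\cdot,\Id+g)\,(1+g')$ and the substitution $y=x+g(x)$, and the boundary condition via $\p_t^k h(t,-\infty)=\p_t^k f(t,-\infty)$. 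You instead observe that in the global chart right translation is the \emph{affine} map $f\mapsto g+f\o(\Id+g)$, so by \cite[8.3]{KMRu} everything reduces to boundedness of the linear operator $L_g$, which you assemble from Theorems \ref{thm:Bcomp} and \ref{thm:Wcomp2}, the $\BM$-module property, and the same Jacobian cancellation $\int|\si\o G|\,(1+g')\,dx=\int|\si|\,dy$. Your reduction is cleaner and more conceptual: it isolates linearity in $f$ as the structural reason right translation is regular while left translation (nonlinear in $f$, with an uncompensated second-derivative term) is not, and it dispenses with carrying a Banach-space parameter through Fa\`a di Bruno. What it costs is that you must check that $L_g$ is defined and bounded on the \emph{whole} model space (not just the chart domain), that the evaluation component $f\mapsto f(x_0+g(x_0))=f(x_0)+\int_{x_0}^{x_0+g(x_0)}f'$ is bounded in the chart data (which needs the Sobolev bound $\sup|f'|\lesssim\|f'\|_{W^{1,p}}$ in the $\SpM$ cases), and that multiplication by the fixed function $1+g'\in\BM(\R)$ is a \emph{bounded} operator — the module statement in Proposition \ref{prop:incl} is used here in its bounded-bilinear form, which is true (Leibniz plus $M_jM_k\le M_{j+k}$) but worth saying explicitly. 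With those small points made precise, your argument is complete and yields the same conclusion, with the half-Lie statement then following from Proposition \ref{strangeSpM.1} exactly as in the paper.
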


\begin{proof}
  Let us first consider $\Diff (W^{\infty,p} \cap L^1)_2(\R)$. 
  Consider a $C^\infty$-curve $t \mapsto \Id + f(t,~)$ in 
  $\Diff (W^{\infty,p} \cap L^1)_2(\R)$, i.e., $f \in C^\infty(\R^2)$ satisfies (\ref{strangeSpM.1}.\ref{eq:counterp1}) and $\p_x f>-1$. 
  Let $g \in (W^{\infty,p} \cap L^1)_2(\R)$ with $g' > -1$. 
  In view of (\ref{prop:Bgroup}.\ref{eq:group1}) we must show that $h(t,x) := f(t,x+g(x))$ satisfies 
  (\ref{strangeSpM.1}.\ref{eq:counterp1}). 
  The first condition in (\ref{strangeSpM.1}.\ref{eq:counterp1}) can be check as in 
  \cite{MichorMumford13} (there is only a shift by 1 in the  
  order of differentiation). 
  For the second condition consider 
  \begin{align} \label{eq:counter4}
        \p_t^k\p_x h(t,x) = (\p_t^k \p_x f)(t,x+g(x)) (1+g'(x)),  
  \end{align}
  and thus $\sup_{t \in K} \|\p_t^k \p_x h(t,~)\|_{L^1(\R)} <\infty$ for all $k \in \N$ and $K \in \sK(\R)$, 
  since $f$ satisfies (\ref{strangeSpM.1}.\ref{eq:counterp1}) and since $x \mapsto x +g(x)$ is a diffeomorphism.
  The third condition follows from 
  \begin{equation} \label{eq:counter5}
    \p_t^k h(t,-\infty) = \lim_{x \to -\infty} \p_t^k h(t,x) = \lim_{x \to -\infty} \p_t^k f(t,x+g(x)) = \p_t^k f(t,-\infty).
  \end{equation}

  Now let us turn to $\Diff (W^{[M],p}\cap L^1)_2(\R)$. 
  Let $U$ be open in a Banach space $E$ 
  and let $f \in \CM(U \times \R)$ satisfy $\p_x f>-1$ and 
  \begin{align} \label{eq:counterp3}
  \begin{split}
    \forall K \in \sK(U) ~ \Box \rh>0 : &
      \sup_{\substack{k \in \N,\al \in \N_{>0}\\ u \in K\\  \|v_j\|_E\le 1}}  
      \frac{\big(\int_{\R} |\p_u^k\p_x^{\al} f(u,x)(v_1,\dots,v_k)|^p\, dx\big)^{1/p}}{\rh^{k+\al}\, 
      (k+\al)!\, M_{k+\al}}< \infty,
      \\ &
      \sup_{\substack{k \in \N, u \in K\\  \|v_j\|_E\le 1}}  
      \frac{\int_{\R} |\p_u^k\p_x f(u,x)(v_1,\dots,v_k)|\, dx}{\rh^{k+1}\, (k+1)!\, M_{k+1}}< \infty,
      \\&
      \sup_{\substack{k \in \N, u \in K\\  \|v_j\|_E\le 1}}  
      \frac{ |\p_u^k f(u,-\infty)(v_1,\dots,v_k)|}{\rh^{k}\, k!\, M_{k}}< \infty.
  \end{split}
  \end{align}
  Let $g \in (W^{[M],p}\cap L^1)_2(\R)$ with $g' > -1$.
  We must show that $h(u,x) := f(u,x+g(x))$ satisfies \eqref{eq:counterp3}. 
  Again the first condition in \eqref{eq:counterp3} follows along the lines of the proof of Theorem \ref{thm:compDiff}. 
  As before we may infer from \eqref{eq:counter4} and \eqref{eq:counter5} that the second and the third condition in 
  \eqref{eq:counterp3} are transmitted from $f$ to $h$. 

  The other cases can be treated analogously; in the Fa\`a di Bruno formula for $\p_u^k \p_x^\al h$ there appear only 
  derivatives of $f$ and $g$ of order $\ge 1$ and all these are globally bounded in $x$. 
\end{proof}

\subsection{Half-Lie groups associated to $\SpM\cap L^1$ for 
$1<p\le 2$, and their $R$-transforms}
\label{ssec:SpMgroups}
We consider the convenient vector  space 
$(\SpM\cap L^1)_2(\mathbb R)
:=\big\{f\in C^\infty(\mathbb R): f'\in \SpM(\mathbb R)\cap L^1(\mathbb R)\big\}$ 
of bounded antiderivatives of functions in $\SpM(\mathbb R)\cap L^1$ as described in 
\ref{ssec:SpMnegative}. Note that the evaluations at $-\infty$ and at $\infty$ again make sense.
We consider the exact and splitting 
sequence of convenient vector spaces 
\begin{equation*}
\xymatrix{
(\SpM\cap L^1)_0(\mathbb R)\quad \ar@{^(->}[r] & 
 (\SpM\cap L^1)_2(\mathbb R) 
\ar@{->>}[rrr]^{(\on{ev}_{-\infty},\on{ev}_\infty)} &&& \mathbb R^2
}
\end{equation*}
where $(\SpM\cap L^1)_0(\mathbb R)$ is just the kernel of $(\on{ev}_{-\infty},\on{ev}_{\infty})$.

The space $(\SpM\cap L^1)_0(\mathbb R)$ differs from $\SpM(\mathbb R)\cap L^1$ if $p>1$; 
Lemma \ref{strangeSpM.2} can easily be adapted to show this.
We also consider the space 
$(\SpM\cap L^1)_1(\mathbb R)=\ker(\ev_{-\infty})$ of functions in 
$(\SpM\cap L^1)_2(\R)$ with $f(-\infty)=0$.
In the sequence of injections 
$$
(\SpM\cap L^1)_0(\mathbb R)\hookrightarrow(\SpM\cap L^1)_1(\mathbb R)
\hookrightarrow(\SpM\cap L^1)_2(\mathbb R)\rightarrowtail \BM(\mathbb R)
$$
each space is an ideal in $\BM(\mathbb R)$; this can be checked easily.

We consider corresponding groups:  
\begin{multline*}
\Diff(\SpM\cap L^1)_0(\mathbb R)\hookrightarrow\Diff(\SpM\cap L^1)_1(\mathbb R)
\hookrightarrow
\\
\hookrightarrow
\Diff(\SpM\cap L^1)_2(\mathbb R)\rightarrowtail \Diff\BM(\mathbb R)
\end{multline*}
Each of these is a normal subgroup in each other in which it is contained.
Only 
$\Diff\BM(\mathbb R)$ are $C^{[M]}$ Lie groups. The other groups 
$\Diff(\SpM\cap L^1)_0(\mathbb R)$, $\Diff(\SpM\cap L^1)_1(\mathbb R)$, and 
$\Diff(\SpM\cap L^1)_2(\mathbb R)$ are only $C^{[M]}$ half-Lie groups.

\subsection{Some easy observations on the half-Lie groups in this paper}
\label{strangeSpM.4} 
Not every tangent vector can be extended to a left
left invariant vector field on the whole group, but they can be extended
to right invariant vector fields, which are only continuous and not differentiable in general.
The same holds for right invariant Riemannian metrics. The tangent space at the identity is not a 
Lie algebra, since $[X,Y]=X'Y-XY'$ is not in the modelling space any more, in general.
The behavior of Sobolev completions of diffeomorphisms groups seems to be the same.

But the right invariant homogeneous $\dot H^1$ metric is $\CM$, even when applied to two right 
invariant vector fields.
Even geodesics exists and are $\CM$.
This is compatible with
Lemma \ref{strangeSpM.5} below.

\begin{lemma}\label{strangeSpM.5}
The $R$-transform, given by 
$$ R:\left\{
\begin{aligned}
 \Diff(\SpM\cap L^1)_1(\mathbb R)&\to 
\SpM\big(\R,\mathbb R_{>-2}\big)\cap L^1(\R,\R_{>-2})\\
\ph &\mapsto
2\;\big((\ph')^{1/2}-1\big)\; ,
\end{aligned}\right.
$$
is $C^{[M]}$.
\end{lemma}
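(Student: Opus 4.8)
The plan is to show that $R$ maps $C^{[M]}_b$-plots to $C^{[M]}$-plots, which by Subsection \ref{ssec:localDC} suffices to conclude that $R$ is $C^{[M]}$. Using the global chart $f \mapsto \Id + f$ on $\Diff(\SpM\cap L^1)_1(\R)$, the task is to prove that the chart representation
$$ f \longmapsto R(\Id+f) = 2\big((1+f')^{1/2}-1\big) $$
is $C^{[M]}$ on $\{f \in (\SpM\cap L^1)_1(\R) : f'>-1\}$ with values in $\SpM(\R)\cap L^1(\R)$. Exactly as in Section \ref{ssec:Rtransform} I would write $R(\Id+f) = f' + F(f')\,f'$, where $F : \R_{>-1}\to\R$ is the real analytic function with $F(0)=0$ determined by $2((1+s)^{1/2}-1)=s(1+F(s))$.

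First I would check that the values lie in the target, by the same reasoning as in Section \ref{ssec:Rtransform}. Since $f'\in\SpM(\R)\subseteq\Sp(\R)$, it vanishes at $\pm\infty$ (Remark after Proposition \ref{prop:incl}); combined with $f'>-1$ and continuity this gives constants $\ep,C>0$ with $-1+\ep\le f'(x)\le C$ for all $x$. As $C^\om\subseteq\CM$, Theorem \ref{thm:Bcomp} yields $F(f')\in\BM(\R)$, and since $\SpM(\R)\cap L^1(\R)$ is a $\BM(\R)$-module (Proposition \ref{prop:incl} together with $L^1\cdot L^\infty\subseteq L^1$), we obtain $R(\Id+f)=f'+F(f')\,f'\in\SpM(\R)\cap L^1(\R)$.

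For the $C^{[M]}$ statement I would take a $C^{[M]}_b$-plot $u\mapsto\Id+f(u,\cdot)$ over an open set $U$ in a Banach space $E$, so that $f\in\CM(U\times\R)$ satisfies the plot conditions \eqref{eq:counterp3} defining $(\SpM\cap L^1)_1$-plots and $\p_x f(u,\cdot)>-1$. Writing $R(\Id+f(u,\cdot)) = \p_x f(u,\cdot) + F(\p_x f(u,\cdot))\,\p_x f(u,\cdot)$, the map $u\mapsto\p_x f(u,\cdot)$ is a $C^{[M]}$-plot into $\SpM\cap L^1$ because $x$-differentiation is bounded linear. The crucial new point — here we cannot invoke Theorem \ref{thm:compDiff}, since composition is \emph{not} $C^{[M]}$ on this half-Lie group — is that $u\mapsto F(\p_x f(u,\cdot))$ is a $C^{[M]}$-plot into $\BM(\R)$. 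To see this I fix $K\in\sK(U)$; since the plot is continuous into $\cB(\R)$ and each $\p_x f(u,\cdot)$ attains its infimum ($>-1$), the function $u\mapsto\inf_x\p_x f(u,x)$ is continuous, hence $-1+\ep\le\p_x f(u,x)\le C$ uniformly on $K\times\R$. Applying Faà di Bruno's formula (\ref{ssec:Faa}.\ref{eq:FaaBM}) in its $U\times\R$ form (Convention \ref{convention}) to the composition of the \emph{fixed} analytic $F$ with $\p_x f$, and using $|F^{(j)}|\le C_F\,\rh_F^{\,j}\,j!\le C_F\,\rh_F^{\,j}\,j!\,M_j$ on $[-1+\ep,C]$ together with the fact that $\p_x f$ satisfies \eqref{cond:B} (as $\SpM\cap L^1\subseteq\BM$), Lemma \ref{ssec:Faa} produces the bound \eqref{cond:B} for $F(\p_x f)$; this is verbatim the $\BM$-computation of Theorem \ref{thm:compDiff} with the constant outer plot $F$. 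Finally, multiplication $\SpM\cap L^1\times\BM\to\SpM\cap L^1$ is bounded bilinear, hence $C^{[M]}$, so composing the two plots shows that $u\mapsto R(\Id+f(u,\cdot))$ is a $C^{[M]}$-plot.

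The step I expect to be the main obstacle is precisely the verification that $F(\p_x f)$ is a genuine $\BM$-plot in the half-Lie setting, where the general composition result is unavailable. The reason it nonetheless goes through is structural: $R$ sees only $\p_x f$, which lies in the good space $\SpM\cap L^1$, and never $f$ itself, so the $0$th-order term that destroys $C^{[M]}$-continuity of left translation never appears. The two technical ingredients that must be handled with care are the uniform two-sided bound $-1+\ep\le\p_x f\le C$ on $K\times\R$ — needed so that the analytic $F$, defined only on $\R_{>-1}$, composes with uniform $[M]$-estimates — and the Faà di Bruno bookkeeping supplied by Lemma \ref{ssec:Faa}.
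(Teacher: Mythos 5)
Your proposal is correct and follows essentially the same route as the paper: both reduce to the decomposition $R(\Id+f)=\p_x f+F(\p_x f)\,\p_x f$, verify the $\BM$-plot property of $F(\p_x f)$ via the composition estimates of Theorem \ref{thm:Bcomp} (as in \ref{ssec:Rtransform}), and conclude by the module property of $\SpM\cap L^1$ over $\BM$. Your explicit treatment of the uniform bound $-1+\ep\le\p_x f\le C$ on $K\times\R$ is a detail the paper leaves implicit, but it is the same argument.
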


This also holds (for  $C^\infty$ instead of $\CM$) for the half-Lie group 
$\Diff (W^{\infty,p}\cap L^1)_1(\mathbb R)$. 

\begin{proof}
The arguments given in \ref{ssec:Rtransform} imply that $R$ maps $\Diff(\SpM\cap L^1)_1(\mathbb R)$ to 
$\SpM\big(\R,\mathbb R_{>-2}\big)\cap L^1(\R,\R_{>-2})$. 
To see that $R$ is $\CM$ let $U$ be open in a Banach space $E$ and let $f \in \CM(U \times \R)$ satisfy $\p_x f>-1$ and 
(\ref{strangeSpM.3}.\ref{eq:counterp3}). 
We must check that $g:= R(\ph) = R(\Id +f) = \p_x f + F(\p_x f) \p_x f \in \CM(U \times \R)$ 
satisfies 
\begin{align} \label{eq:RtransformWM2} 
  \begin{split}
    \forall K \in \sK(U) ~ \Box \rh>0 : &
      \sup_{\substack{k \in \N,\al \in \N\\ u \in K\\  \|v_j\|_E\le 1}}  
      \frac{\big(\int_{\R} |\p_u^k\p_x^{\al} g(u,x)(v_1,\dots,v_k)|^p\, dx\big)^{1/p}}{\rh^{k+\al}\, 
      (k+\al)!\, M_{k+\al}}< \infty,
      \\ &
      \sup_{\substack{k \in \N, u \in K\\  \|v_j\|_E\le 1}}  
      \frac{\int_{\R} |\p_u^k g(u,x)(v_1,\dots,v_k)|\, dx}{\rh^{k}\, k!\, M_{k}}< \infty.
  \end{split}
  \end{align}  
We may conclude by Proposition \ref{prop:incl} and by Theorem \ref{thm:Bcomp} (as in \ref{ssec:Rtransform}) 
that $h:= F(\p_x f)$ satisfies 
\[
  \forall K \in \sK(U) ~ \Box \rh>0 : 
      \sup_{\substack{k \in \N,\al \in \N\\ u \in K, x\in \R\\  \|v_j\|_E\le 1}}  
      \frac{|\p_u^k\p_x^{\al} h(u,x)(v_1,\dots,v_k)|}{\rh^{k+\al}\, 
      (k+\al)!\, M_{k+\al}}< \infty.
\]
Then $g = \p_x f + h\, \p_x f$ satisfies \eqref{eq:RtransformWM2}, since so does $\p_x f$ and hence also $h\, \p_x f$.
\end{proof}

Thus all results about the $R$-transform from Section \ref{sec:HS} hold also for the half-Lie 
groups $\Diff(\SpM\cap L^1)_1(\mathbb R)$ for $1<p\le 2$.


\def\cprime{$'$}

\end{document}